\theoremstyle{plain}
\newtheorem{thm}{Theorem}[section]
\newtheorem{backthm}{Background Theorem}[section]
\newtheorem{lem}{Lemma}[section]
\newtheorem{corollary}{Corollary}[section]
\theoremstyle{definition}
\newtheorem{defn}{Definition}[section]
\theoremstyle{remark}
\newcommand{\ddx}{\frac{\partial}{\partial x}}
\newcommand{\ddy}{\frac{\partial}{\partial y}}
\newcommand{\ddz}{\frac{\partial}{\partial z}}
\newcommand{\dda}{\frac{\partial}{\partial x_1}}
\newcommand{\ccc}{\textbf{\textsf{C}}}
\numberwithin{equation}{section}
\subjclass[2010]{Primary: 53D99; Secondary: 53Z05}
\title{Equivalence of Nondifferentiable Metrics}
\author{Alexander Golubev}
\address{New York Institute of Technology, College of Engineering and 
Computing Sciences, New York, New York}
\email{agolubev@nyit.edu}
\begin{document}

\begin{abstract}We study nondifferentiable metrics occuring
in general relativity via the method of equivalence of Cartan
adapted to the Courant algebroids. We derive  new local differential 
invariants naturally associated with the loci of nondifferentiability 
and rank deficiency of the metric. As an application, we utilize the 
newfangled invariants to resolve the problem of causality in the interior 
of the black holes that contain closed timelike geodesics. Also, a no-go
type theorem limits the evolution scenarios for gravitational collapse. 
\end{abstract}
\date{}

\maketitle
\tableofcontents

\section{Introduction}
\paragraph*{}
Our objective is to formulate the notion of invariants of nondifferentiable
(pseudo-)\\
metrics, and to adapt \'{E}lie Cartan's method of equivalence~\cite{Cartan}
(originally developed to elicit differential invariants of smooth structures) 
so as to assimilate various metric pathologies\footnote{We deliberately avoid 
the term `singularity' in this context. Its definition in general relativity 
is narrow and does not encompass all the phenomena treated herein.} into 
differential geometry. The need for such assimilation arises primarily in 
general relativity with black hole solutions of the Einstein vacuum equations 
(EVE). There, genuine information is encoded via nondifferentiable tensor 
coefficients, although several blow-up solutions of other nonlinear PDE 
merit further investigation.\\
\indent
A trademark of metric nondifferentiability is geodesic incompleteness. Its
presence in general relativity unambigously indicates that strong
gravitational fields are beyond the scope of classical theory. Even more
troublesome, the Kerr spacetime allows closed timelike geodesics - a fact
that flies in the face of causality.\\
\indent
So far, all attempts to exclude or domesticate the `wrong' metric 
configurations in a systematic way have failed. Most notably, the weak and 
strong cosmic censorship hypotheses due to Roger Penrose~\cite{Pen2} turned 
out to be of limited applicability (see~\cite{DL} for a detailed discussion 
as well as the recent results). Furthermore, the hyperbolic PDE stability 
of solutions of EVE on the function spaces of initial data on spacelike 
hypersurfaces has not been established in full generality.\\
\indent
In this paper we adopt a different viewpoint. Rather than trying to downplay
the undesirable pathologies by selecting generic and/or stable solutions of
EVE while retaining the desirable ones (the black hole event horizon, to name
one), we acknowledge their legitimacy and incorporate them into our framework.\\
\indent
Such a program necessitates building an apparatus to treat metric configurations
with or without loci of nondifferentiability uniformly. In particular, the method 
of equivalence must be made to work in this formalism.\\
\indent
Our approach brings about a new mathematical object - a preframe, that
generalizes the concept of coframe onto a Courant algebroid (see Section 2.1.1),
realized as the bundle $T^*M \oplus TM$ equipped with the Courant bracket.
In what follows, $(M, \omega)$ is a symplectic manifold. With this additional
structure, we can represent the Courant algebroid as a Lie bialgebroid, such
that the two complementary Dirac structures (maximally isotropic subbundles of 
$T^*M \oplus TM$ with respect to the positive Courant bilinear form and 
closed under the Courant bracket) are related via the bundle map
assembled from the symplectic 2-form and its attendant Poisson tensor.
Subsequently, we show that for this Courant algebroid there exists a unique
(up to a symplectomorphism) connection, which we designate the symplectic
connection, that lifts preframes injectively to the cotangent bundle manifold:
$$\ccc: \;T^*M \oplus TM \hookrightarrow TT^*M.$$
Fortuitously, the symplectic connection acts as a morphism of Lie
bialgebroids, the image bialgebroid being an isotropic integrable subbundle of 
$TT^*M \oplus T^*T^*M$ (see Section 2.1.3).\\
\indent
Via the canonical symplectomorphism of~\cite{AM}
$$\boldsymbol{\alpha}: TT^*M \longrightarrow T^*TM,$$
we obtain a full coframe on $T^*TM$, and thus pave the way for the 
application of Cartan's equivalence method. However, this is a principally
new variety of equivalence in that the resulting mapping does not descend
from the level of the tangent bundle manifold down to $(M, \omega)$ in 
general. \\
\indent
It does in the case of smooth metrics, wherein our preframes turn out to be 
sections of a Dirac subbundle, and locally equivalent\footnote{This version 
of equivalence was invented by Hitchin~\cite{Hitch}. He constructed the 
moduli space of generalized Calabi-Yau manifolds by quotienting out the 
action of exact 2-forms (B-fields) as well as the action of conventional 
biholomorphic maps.} to the standard coframes viewed as sections of the 
trivial Dirac subbundle $T^*M$. Explicitly, the Hitchin map is the 
projection $${(\text{Id}}^* \oplus 0): T^*M\oplus TM \longrightarrow T^*M.$$
\noindent
No loss of data occurs since the preframe is integrable.\\
\indent
Obversely, preframes not closed under the Courant bracket are not sections
of some Dirac subbundle and are considered singular. Their lifts to $T^*TM$
are best characterized in terms of the Lie algebra of isometries within
Diff($T^*M$). They are certain conjugacy classes of Diff($T^*M$)/Diff($M$).
No smooth metric configuration would yield a preframe of this type.\\ 
\indent
Having introduced preframes at a conceptual level, a more detailed account 
of how equivalence was established is now in order. Thus
for a nondifferentiable metric tensor $[g_{ij}]$ on $M$, $\dim M = 2k$, 
we introduce $2k^2 + k$ local auxiliary scalar fields satisfying the 
Laplace-Beltrami equation on the metric background. They encapsulate the 
pathologies associated with the original metric, and allow for it to be 
reverse-engineered. To be usable at the coframe level, they (locally being 
at most twice differentiable) would have to be smoothed. An application of 
the Nash-Gromov deep smoothing operators~\cite{N, Grom} outputs multiple 
smooth functions. Effectively, this approach trades off nondifferentiability 
for nonuniqueness.\\
\indent
Our way of coming to grips with the overflowing scalar fields involves an
infinite-dimensional completely integrable Hamiltonian system that allows
to organize them into parametric families united by Hamiltonian flows. 
Specifically, we take up the isospectral class `manifold' of the Hill operator
with periodic potentials, following a seminal paper of McKean and Trubowitz
\cite{McT}. The auxiliary scalar fields are represented by products of an
eigenfunction of some appropriate Hill operator with a potential periodic 
with respect to a local parameter transversal to the loci of nondifferentiability 
and/or rank deficiency, which we dub the blow-up parameter, and a nonvanishing 
function. The nondifferentiability property becomes quantifiable by comparison 
with the spectrum of the harmonic oscillator (the Hill operator with a constant 
potential). Smooth metrics produce constant Hill operator eigenfunctions, 
whereas those with nondifferentiability loci require oscillating ones.\\
\indent
The oscillating eigenfunctions are incorporated into preframes, rendering 
those preframes singular. Their attendant nonholonomic frames on $TT^*M$ 
are gotten by lifting the preframe and its complement via the symplectic 
connection. On the cotangent bundle manifold, we use the adjoined variables
(impulse variables in the canonical phase space) to parameterize nonunique 
smoothed auxilliary scalar fields in a prolongation. Hence the equivalence 
would encompass the totality of smoothed solutions of the Laplace-Beltrami 
equation.\\
\indent
The maximal structure group for our equivalence problem has
to preserve the preframes (for rotations among the base variables 
potentially interfere with the individual auxilliary scalar fields), as
well as to respect the (action of the) symplectic connection $\ccc$
and the canonical symplectomorphism $\boldsymbol{\alpha}$.
Therefore only a maximal torus group $\bigoplus_{i=1}^{2k} SO_i(2, 
\mathbb{R})$ survives. Cartan's group reduction/frame normalization 
technique can only be implemented with the auxilliary scalar fields 
that have locally nonvanishing first and second derivatives, 
known as free maps - those belonging to the set of solutions of the
freedom partial differential relation (\cite{Grom}, Chapter 1). 
To ensure we deal with free auxilliary scalar fields, we apply the 
homotopy principle to deform our scalar fields into free ones. The 
resulting $4k$-coframes exhibit some specific intrinsic torsion, 
which serves as a repository of data pertaining to metric loci of 
nondifferentiability.\\
\indent
The upshot of this revamping is, new invariants emerge. They encode the
minutia of metric blow-ups via holomorphic structures of the Hill surfaces
(hyperelliptic Riemann surfaces of infinite genus)~\cite{McT2}. Using recent 
advances in hyperelliptic function theory~\cite{FKT}, particularly the 
Torelli theorem for certain Riemann surfaces of infinite genus, these 
invariants can be conveniently presented as the Riemann period matrices 
parameterized by the base manifold variables. Their link to nonintegrable
preframes involves an inversion formula for theta functions found by Its 
and Matveev in the finite-genus case~\cite{IM}, and generalized by McKean 
and Trubovitz to cover the Hill surfaces of infinite genus~\cite{McT2}.\\
\indent
After this protracted discussion of technical matters, we at last take up
the applications of our differential invariants to black hole solutions of
EVE, collected in Chapter 5. First, we compute the global singular
preframe of the Kerr spacetime, and delineate one open subset (we call this
subset representative) of it that contains parts of all the geometrically 
significant loci of nondifferentiabily, as well as those of rank deficiency. 
They are (subsets of) the event horizon, the Cauchy horizon, and the locus 
of curvature blow-up. That is a synopsis of Section 5.1.\\
\indent
Next, we offer a resolution of the nonuniqueness of $C^0$-extensions of 
solutions of EVE past the Cauchy horizon. The strong cosmic censorship
hypothesis (in a precise formulation) has been disproved by Dafernos and
Luk~\cite{DL}. We find a way to select a unique extension based on the 
values of the germaine Riemann period matrices. Thus the (internal) locus
of curvature blow-up turns out to be determined by the behavior of metric
coefficients at the (external) locus of rank deficiency/nondifferentiability
controlled by the future development imposed by EVE. However, the apparent
breakdown of causal structure still persists for dynamics on that background. 
To save the situation, we draw distinction between ordinary differentiable
configurations and their attendant identically zero Riemann period matrices,
and pathological ones. We speculate that preservation of the Riemann period
matrix is inextricably linked with mass distribution, to the extent it would
eliminate at least some trajectories should a mass cross the event horizon.
But the door still remains half-open for an intricate mass trajectory
to sneak past the horizon without disturbing the invariants. This hesitancy
is informed by the effects of gravitational waves on black hole solutions.
They disturb the event horizons temporarily. After the wavefront passes,
the black holes return to their original state unscathed. One would expect
a similar scenario to play out if the mass moves in a special way.\\
\indent
Section 5.4 deals with one particular implication of the invariants being
there to label black holes - the quantum effect of black hole evaporation
due to emission of thermal radiation~\cite{Hawk, HaHa}. Following the 
prescriptions of quantum field theory, we introduce absorption and emission
operators acting on the eigenvalues of the Hill operator whose periods
(parameterized by the manifold variables) bridge the representative set
from the locus of nondifferentiability to the regular metric. Evaporation 
would effectively shrink the locus of nondifferentiability, and modify  
the discriminant so that the odd- and even-indexed eigenvalues would move
closer to each other. Conversely, absorption operators would elongate the 
intervals of instability of the Hill operator. Hence absoption/emission
operators have to be Hermitian, and, being defined on a  Hilbert space
of quantum states, have compact kernels with no further restrictions. 
The resulting configurations at the Riemann surface level would 
no longer encode viable geometric structures, let alone black hole 
solutions of EVE, but the chronological products of (combinations of) 
those operators may furnish a representation of the information imprinted 
on the event horizon and subsequently lost due to evaporation. We 
contrive a necessary condition for the information to be retrievable 
in terms of formal properties of absorption/emission operators. That is, 
their brackets under composition must remain noncommutative at all values 
of some affine time parameter. Thus our quantum operator algebra must be 
defined over the field of quaternions. Otherwise, the existence of 
invariants would not forestall the loss of information predicated on 
emission of thermal radiation.\\
\indent
In Section 5.5, we state and prove a No-Go theorem that limits the 
gravitational collapse of axisymmetric rotating distributed masses
down to the Kerr spacetime. Once again, the time development set by
EVE is reexpressed in terms of time-dependent discriminant evaluated
at a fixed eigenvalue of the Hill operator. Its second partial derivative
with respect to global time coordinate undergoes some anharmonic
oscillations prior to coming to a stationary state. These oscillations
are governed by the universal oscillator ODE with a time-dependent
damping factor. Our equation establishes a link between the Kerr
spacetime and the initial mass distribution informing damping.\\
\indent
Further exploratory steps are taken in Section 5.6. To account 
for multiple black holes, moving
in space-time relative to each other, possibly coalescing, we need
a more robust formalism. One blow-up parameter would not be enough,
even locally. A logical way to overcome this limitation is to
introduce multiple globally independent parameters. Hence, we argue 
that the spectral theory of two-dimensional Schr\H{o}dinger operator, 
$$[\frac{{\partial}^2}{\partial x_1^2} + \frac{{\partial}^2}
{\partial x_2^2} + Q(x_1,x_2)],$$ with doubly periodic potential,
investigated in~\cite{K}, turns out to be a substitute for 
the Hill operator. Complexified Fermi curves (\cite{FKT}, Section 16)
take the place of the Hill surfaces. Torelli theorem remains, but the
Its-Matveev formula no longer applies. One implication of a 
multiparametric theory is that the time coordinate would exist only 
locally in asymptotically flat space-time.\\

\vspace{0.2 in}
\textbf{Acknowledgements.} We would like to express our gratitude to Vladimir 
Matveev and Igor Krichever for generously sharing their insights into spectral 
theory and integrable systems, and Andrei Todorov for stimulating discussions.

\section{Preframes}

\subsection{Preliminaries}
\paragraph*{}
The point of departure here is an assembly of topics from
differential geometry in the category of smooth manifolds and
diffeomorphisms. 
\subsubsection{Courant algebroids}
The notion of Courant algebroid, conceived as a natural outgrowth of symplectic
or Poisson structures, and modeled after Lie algebroids, was first introduced by 
Theodore Courant~\cite{Cour}. Below we provide a somewhat systematic overview, 
tailored to our needs, following Liu, Weinstein, and Xu~\cite{LWX}, and the 
original material referenced therein.

\begin{defn}
A \textit{Lie algebroid} on a manifold $M$ is a vector bundle $A \longrightarrow M$,
equipped with a vector bundle map, $\hat{a}: A \longrightarrow TM$, over $M$,
called the \textit{anchor} of $A$, and a bracket $[\cdot, \cdot]: \; \Gamma (A)
\times \Gamma (A) \longrightarrow \Gamma (A)$, which is bilinear and
antisymmetric, satisfies the Jacobi identity, and is such that
\begin{equation}
[X, uY] = u[X, Y] + (\hat{a}(X)u)Y,\;\;\forall X, Y \in \Gamma (A),\;u\in C^{\infty}(M),
\end{equation}
\begin{equation}
\hat{a}([X, Y]) = [\hat{a}(X), \hat{a}(Y)],\;\;\forall X, Y \in \Gamma (A).
\end{equation}
\end{defn}
\noindent
The Lie algebroid $A$ is \textit{transitive} if $\hat{a}$ is fiberwise surjective,
\textit{regular} is $\hat{a}$ is of locally constant rank, and 
\textit{totally intransitive} if $\hat{a} =0$. The manifold $M$ is the
\textit{base} of $A$.
\begin{defn}
A \textit{base-preserving morphism of Lie algebroids}, referred to as simply
morphism, is a vector bundle map $\varphi : (A_1, {\text{pr}}_1, M) \rightarrow
(A_2, {\text{pr}}_2, M)$ such that ${\hat{a}}_2 \cdot \varphi ={\hat{a}}_1$,
and $$\varphi [X, Y] = [\varphi(X), \varphi(Y)].\;\;\;\forall X,Y \in \Gamma(A_1).$$
\end{defn} 
\indent
The anchor of a Lie algebroid encodes its geometric properties. If the algebroid
is transitive, then right inverses to the anchor are connections. If the algebroid
is regular, then the image of the anchor defines a foliation of the base manifold,
the \textit{characteristic foliation}, and over each leaf of that foliation, the
Lie algebroid is transitive.
\begin{defn}
A \textit{Lie bialgebroid} on a manifold $M$ is a dual pair of Lie algebroids 
$(A, A^*)$ such that the coboundary operator $d_*$ on $\Gamma ({\wedge}^* A)$ 
satisfies
$$d_*[X,Y] =[d_*X,Y]+[X,d_*Y],\;\;\;\forall X, Y \in \Gamma(A).$$
\end{defn}
\noindent
On classical Lie bialgebras we have the compatibility condition between
$(\mathfrak{g}, [\cdot, \cdot])$ and  $({\mathfrak{g}}^*, [\cdot, \cdot]^*)$ 
formulated in terms of a cocycle. This definition is just an extension. \\
\indent
For a Poisson manifold $(M, \pi)$, there is the \textit{standard Lie 
bialgebroid} $(TM, T^*M)$ with the coboundary operator $d_{\pi}= [\pi, \cdot]$. 
\begin{defn}\label{D: Courant algebroid}
A \textit{Courant algebroid} is a vector bundle $E\; \longrightarrow M$
equipped with a nondegenerate symmetric bilinear form $(\cdot, \cdot)$ on
the bundle, a skew-symmetric bracket $[\cdot, \cdot]$ on $\Gamma(E)$, and
a bundle map $ \rho :E\; \longrightarrow TM$ such that the following
properties are satisfied:\\

\begin{enumerate}
\item $[e_1, [e_2, e_3]] + [e_3, [e_1, e_2]] + [e_2, [e_3, e_1]]
= \mathcal{D}T(e_1, e_2, e_3)\;\;\forall e_1, e_2, e_3 \in \Gamma(E);$\\
\item $\rho[e_1, e_2] = [\rho e_1, \rho e_2]\;\;\forall e_1, e_2 \in \Gamma(E);$\\
\item $ [e_1,fe_2] =f[e_1, e_2] + (\rho (e_1))e_2 - (e_1, e_2) \mathcal{D}f\;\;
\forall e_1, e_2 \in \Gamma(E),\;\; \forall f \in C^{\infty}(M);$\\
\item ${\rho}_{\circ} \mathcal{D} =0,$ i.e. $(\mathcal{D}f, \mathcal{D}g) =0\;\;
\forall f, g \in C^{\infty}(M);$\\
\item $\rho(e)(h_1, h_2) =([e, h_1] + \mathcal{D}(e, h_1), h_2) +
(h_1, [e, h_2] + \mathcal{D}(e, h_2)),\\ 
\forall e, h_1, h_2 \in \Gamma(E)$ \\
where $T(e_1, e_2, e_3)\in C^{\infty}(M)$ is defined by\\
$$ T(e_1, e_2, e_3) \overset{\textnormal{def}}{=} \frac{1}{3}(
(e_1, [e_2, e_3]) + (e_3, [e_1, e_2]) + (e_2, [e_3, e_1])),$$\\
and $\mathcal{D}: C^{\infty}(M) \longrightarrow \Gamma(E)$ is the map defined by\\
$$(\mathcal{D}f, e) \overset{\textnormal{def}}{=}\frac{1}{2}\rho (e) f.$$
\end{enumerate}
\end{defn}
\begin{defn}
Let $E$ be a Courant algebroid. A subbundle $L \subset E$ is called \textit{isotropic} 
if it is isotropic under the symmetric bilinear form $(\cdot, \cdot)$.
It is called \textit{integrable} if $\Gamma (L)$ is closed under the bracket
$[\cdot, \cdot]$. A \textit{Dirac structure}, or \textit{Dirac subbundle} is a 
subbundle $L$ which is maximally isotropic and integrable.
\end{defn}
\indent
Any Dirac structure is trivially a Lie algebroid with anchor $\rho |_L$. 
Conversely, suppose $A$ and $A^*$ are both Lie algebroids over the
base manifold $M$, with anchors $\hat{a}$ and ${\hat{a}}_*$ respectively. 
Let $E$ denote
their vector bundle direct sum $E = A \oplus A^*$ (defined as the pullback
of $A {\times}_M A^* $). On $E$ there exist two natural nondegenerate 
bilinear forms, one symmetric, and the other antisymmetric, which are 
defined as follows:\\
\begin{equation}\label{E: bilinear forms}
(X_1 + {\xi}_1, X_2 + {\xi}_2)_{\pm} \overset{\textnormal{def}}{=}
\frac{1}{2}(\langle {\xi}_1, X_2 \rangle \pm \langle {\xi}_2, X_1 \rangle).
\end{equation} 
\indent
The additional structure begets some refined additional structure:
\begin{defn}
A Dirac structure $L$ such that the antisymmetric bilinear form defined 
in~\eqref{E: bilinear forms} satisfies $(X_1 + {\xi}_1, X_2 + {\xi}_2)_- =0$,
$\forall (X_{1,2} + {\xi}_{1,2}) \in \Gamma (L)$ is called a \textit{null Dirac structure}.
\end{defn}
\indent
On $\Gamma (E)$, we introduce a bracket operation by\\
\begin{alignat}{2}\label{E: generalized bracket}
[e_1, e_2] =&([X_1, X_2] + {\mathcal{L}}_{{\xi}_1} X_2 -
{\mathcal{L}}_{{\xi}_2} X_1 - d_*(e_1, e_2)_-)\\ +
&([{\xi}_1, {\xi}_2] + {\mathcal{L}}_{X_1} {\xi}_2 -
{\mathcal{L}}_{X_2} {\xi}_1 + d(e_1, e_2)_-),\notag
\end{alignat}
\noindent
where $e_{1,2} = X_{1,2} + {\xi}_{1,2}$, $d:C^{\infty}(M) \longrightarrow \Gamma (A^*)$,
$d_*: C^{\infty}(M) \longrightarrow \Gamma (A)$ are the differentials.\\
\indent
Now we let $\rho : E \longrightarrow M$ be the bundle map defined
by $\rho = \hat{a} + {\hat{a}}_*$. That is\\
\begin{equation}
\rho (X + \xi) \overset{\textnormal{def}}{=} \hat{a}(x) + {\hat{a}}_*(\xi),\;\;
\forall X \in \Gamma (A),\; \forall \xi \in \Gamma (A^*).
\end{equation}
\noindent
Coincidentally, we have $\mathcal{D}$ of Definition~\ref{D: Courant algebroid}
expressible in terms of the exterior differential operators introduced 
in~\eqref{E: generalized bracket}: $\mathcal{D} = d_* + d$.\\
\indent
Finally, we specialize our choice of the component algebroids. Namely,
$A = TM$, $A^* = T^*M$, the latter equipped with the trivial bracket.
Then~\eqref{E: generalized bracket} takes the form:\\
\begin{equation}\label{E: my bracket}
[e_1, e_2] = [X_1, X_2] + {\mathcal{L}}_{X_1} {\xi}_2 -
{\mathcal{L}}_{X_2} {\xi}_1 + d(e_1, e_2)_-,
\end{equation}
\noindent
This is the Courant bracket that we all know and love.\\
\indent
There are multiple Dirac subbundles associated with the Courant
algebroid $TM \oplus T^{*}M$. In particular, we may have a pair
of transversal ones. Then $ L_1 \oplus L_2 = TM \oplus T^{*}M$.
Such a pair constitutes a Lie bialgebroid (\cite{LWX}, Theorem 2.6):
\begin{backthm}
In a Courant algebroid $(E, \rho, [\cdot, \cdot], (\cdot, \cdot))$
with a pair of transversal Dirac structures, $L_1,\;L_2$, $L_1 \oplus L_2 = E$,
there exists an attendant Lie bialgebroid $(L_1, L_2)$, where $L_2 =(L_1)^*$
under the dual pairing given by $2(\cdot, \cdot)$.
\end{backthm}
\indent
Typically, Dirac structures are obtained by interpolating the tangent and 
cotangent bundles via graphs: $X \mapsto X + i_X\beta$, $\beta \in 
\Gamma ({\Lambda}^2T^*M),\; d\beta = 0$. In (\cite{LWX}, Section 6)
the bundle maps associated with closed 2-forms are designated `strong
Hamiltonian operators'. However, as Liu et al. point out, there are Dirac
structures that are not such graphs. Those are generated by other bundle
maps, satisfying a nonlinear differential equation. \\ 
\indent
From this point on, our base manifold is symplectic, $(M, \omega)$, 
$\dim M = 2k$. Thus there is a bundle isomorphism induced by the
symplectic form, and its inverse induced by the (nondegenerate)
Poisson structure:\\
\begin{equation}
TM \xrightarrow{{\omega}^{\#}} T^*M,\;\;
T^*M \xrightarrow{{\pi}^{\#}} TM,
\end{equation}
implicitly defined via
\begin{equation}
\langle {\omega}^{\#}(X), Y \rangle = \omega (X,Y),\;\;\;
\langle \varphi, {\pi}^{\#}(\beta) \rangle = \pi (\varphi, \beta).
\end{equation}
On $TM \oplus T^{*}M$ they combine into a discrete automorphism
of the Courant algebroid. Our choice is ${\omega}^{\#} - {\pi}^{\#}$.
The formal definition involves projections and the fact that 
the Whitney sum of two vector bundles over the same base manifold is
defined as the pullback bundle of the diagonal embedding
$M \longrightarrow M \times M$ establishing the isomorphism
$TM \oplus T^{*}M \cong TM {\times}_M T^*M$:
\begin{equation*}\begin{CD}
TM @<{(Id \oplus 0)}<< TM \oplus T^*M @>{(0 \oplus Id^*)}>> T^*M.
\end{CD}\end{equation*}
\begin{equation}
({\omega}^{\#} - {\pi}^{\#})(\xi +X)\overset{\textnormal{def}}{=}
({\omega}^{\#}\circ (Id \oplus 0) -{\pi}^{\#}\circ (0 \oplus Id^*))(\xi +X)
= {\omega}^{\#}(X) - {\pi}^{\#}(\xi)
\end{equation}
for smooth sections $X\in \Gamma(TM),\;\;\xi \in \Gamma(T^*M).$
This bundle automorphism satisfies
$$({\omega}^{\#} - {\pi}^{\#})\circ({\omega}^{\#} - {\pi}^{\#}) =-(Id^*\oplus Id).$$
\begin{defn}\label{D: preframe}
A \textit{preframe} on a manifold $(M,\omega)$, $\dim M =2k$, is a 
2k-tuple $\{X_i + {\xi}_i\}, \;\;X_i + {\xi}_i \in \Gamma (TM \oplus T^*M)$
such that 
\begin{equation}
\text{span}\{\{X_i + {\xi}_i\},({\omega}^{\#} - {\pi}^{\#})\{X_i + {\xi}_i\}\}
= TM \oplus T^*M, \;\;\text{and} 
\end{equation}
\begin{equation}
\{X_i + {\xi}_i\} \cap ({\omega}^{\#} - {\pi}^{\#})\{X_i + {\xi}_i\} =0.
\end{equation}
\end{defn}
\noindent
Notably, there is no differentiability provision insofar as it does not
interfere with the rank requirement.

\subsubsection{Hitchin spinors}
The Courant bracket~\eqref{E: my bracket} is antisymmetric but does not
in general satisfy the Jacobi identity. One new feature however is that
unlike the Lie bracket on sections of $TM$, this bracket has non-trivial
automorphisms defined by forms. Let $\beta \in \Gamma({\Lambda}^2 T^*M)$ 
be closed, and define the vector bundle automorphism of $TM \oplus T^*M$ 
known in literature as the B-field by
\begin{equation}
B_{\beta}(X + \xi) =X + \xi + i_X\beta.
\end{equation}
Then one may easily check that
\begin{equation}\label{E: Courant auto}
B_{\beta}([(X_1 + {\xi}_1, X_2 + {\xi}_2]) =
[B_{\beta}((X_1 + {\xi}_1),B_{\beta}(X_2 + {\xi}_2)].
\end{equation}
This interacts with a natural metric structure on $TM \oplus T^*M$.\\
\indent
Let $V$ be an n-dimensional real vector space and consider 2n-dimensional
space $V \oplus V^*$. The direct sum admits a natural nondegenerate inner
product of signature $(n,n)$ defined by
\begin{equation}\label{E: Hitchin1}
(X + \xi, X + \xi) = - \langle X, \xi \rangle,\;\;X\in V,\;\;\xi \in V^*.
\end{equation}
\noindent
The natural action of $GL(n, \mathbb{R})$ preserves this inner product.
The Lie algebra of the orthogonal group of all transformations preserving
~\eqref{E: Hitchin1} allows the following decomposition:
\begin{equation}
\mathfrak{so}(V \oplus V^*) = \text{End}V \oplus {\Lambda}^2V \oplus
{\Lambda}^2V^*.
\end{equation}
\noindent
In particular, $\beta \in {\Lambda}^2V^*$ acts via
$$X + \xi \mapsto i_X \beta$$ and thus exponentiates to an orthogonal
action on $V \oplus V^*$ given by
$$X + \xi \mapsto X + \xi + i_X \beta.$$ This is the algebraic action of
a closed 2-form which preserves the bracket~\eqref{E: my bracket}.\\
\indent
Consider the exterior algebra ${\Lambda}^*V^*$ and the action of 
$X + \xi \in V \oplus V^*$ on it defined by
\begin{equation}\label{E: Courant action}
(X + \xi) \cdot \varphi \overset{\textnormal{def}}{=} i_X\varphi +
\xi \wedge \varphi.
\end{equation}
\noindent
We have $$(X + \xi)^2 \cdot \varphi =i_X(\xi \wedge \varphi) +
\xi \wedge i_X \varphi =(i_X\xi)\varphi =-(X + \xi,X + \xi)\varphi.$$
\noindent
With this operation, the exterior algebra becomes a module over the
Clifford algebra of $V \oplus V^*$. Hence we obtain the spin representation
of the group $Spin(V \oplus V^*)$ acting on the space of differential forms
with values in the canonical line bundle $\sqrt{{\Lambda}^nV}$. Below we 
use the same notation for spinors and forms, as there is a one-to-one 
correspondence between them, and the context makes it clear. Splitting
into even and odd forms, we separate the two irreducible half-spin
representations:
$$ {\mathfrak{S}}^+ = {\Lambda}^{\text{even}}V^* \otimes \sqrt{{\Lambda}^nV},$$
$$ {\mathfrak{S}}^- = {\Lambda}^{\text{odd}}V^* \otimes \sqrt{{\Lambda}^nV}.$$
\noindent
Exponentiating $\beta \in {\Lambda}^2V^* \subset \mathfrak{so}(V \oplus V^*)$,
we end up with an element $\exp \beta \in Spin(V \oplus V^*)$ acting on spinors:
$$\exp \beta (\phi) = (1+ \beta + \frac{1}{2}\beta \wedge \beta + \cdots)
\wedge \phi.$$ 
\noindent
When $\dim V =2k$, there is an invariant bilinear form $\langle \phi, \varphi \rangle$
on ${\mathfrak{S}}^{\pm}$, symmetric for $k$ even, and antisymmetric for $k$ odd.
Using the exterior product, we expand the spinors in graded components: 
$$\langle \phi, \varphi \rangle =\sum_l (-1)^l {\phi}_{2l} \wedge {\varphi}_{2k -2l} 
\in  {\Lambda}^{2k}V^* \otimes (\sqrt{{\Lambda}^{2k}V^*})^2$$
for $\phi,\;\varphi \in {\mathfrak{S}}^+$, and
$$\langle \phi, \varphi \rangle =\sum_l (-1)^l {\phi}_{2l+1} \wedge 
{\varphi}_{2k -2l-1} \in  {\Lambda}^{2k}V^* \otimes (\sqrt{{\Lambda}^{2k}V^*})^2$$
for $\phi,\;\varphi \in {\mathfrak{S}}^-$.\\
\indent
The action~\eqref{E: Courant action} extends to spinors. Thus for $\varphi \in 
{\mathfrak{S}}^{\pm}$, we delineate its annihilator, the linear subspace
$$L_{\varphi} = \{ X+\xi \in V\oplus V^*|(X+\xi)\cdot \varphi =0\}.$$
\noindent
Since $(X+\xi)\in L_{\varphi}$ satisfies
$$0= (X+\xi)\cdot ((X+\xi)\cdot \varphi) =-(X+\xi, X+\xi)\varphi$$
\noindent
we see that $X+\xi$ is null, and so $L_{\varphi}$ is isotropic. Now we are
in a position to define a different notion of integrability in Courant 
algebroids.
\begin{defn}
A spinor $\varphi$, whose attendant annihilator subspace $L_{\varphi}$ is
maximally isotropic, $\dim L_{\varphi} =\dim V$ is called a \textit{pure spinor}.
\end{defn}  
\noindent
Any two pure spinors are related by some element of $Spin(V\oplus V^*)$. Purity
is a non-linear condition.\\
\indent
In this article, having confined ourselves to symplectic
manifolds $(M, \omega)$, we primarily work with the pure spinors
$$1 + \omega  +\frac{1}{2}\omega \wedge \omega + \cdots + \frac{1}{k!}{\omega}^k,$$
$$1 - \omega  -\frac{1}{2}\omega \wedge \omega - \cdots - \frac{1}{k!}{\omega}^k.$$
\noindent
Their maximal isotropic subspaces are the graph Dirac structures
$\{X +i_X\omega| X \in \Gamma (TM)\}$, and $({\omega}^{\#} - {\pi}^{\#})
\{X +i_X\omega| X \in \Gamma (TM)\}$. The relationship between the two, mediated
by our bundle map is not coincidental. Generally, we have
\begin{lem}
The bundle map $({\omega}^{\#} - {\pi}^{\#})$ preserves integrability
of Dirac structures. For every maximally isotropic integrable subbundle
$L \subset TM \oplus T^*M$, we have 
$$[({\omega}^{\#} - {\pi}^{\#})L, ({\omega}^{\#} - {\pi}^{\#})L]
\subset ({\omega}^{\#} - {\pi}^{\#})L.$$
\end{lem}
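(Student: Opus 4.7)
The plan is to realize $\Phi := \omega^{\#} - \pi^{\#}$ as a composition
$$\Phi = B_{\omega}\circ\beta_{-\pi}\circ B_{\omega}$$
of three bundle automorphisms, each of which separately preserves Dirac integrability. Here $B_{\omega}(X+\xi) := X + \xi + i_{X}\omega$ is the $B$-field generated by the closed $2$-form $\omega$, and $\beta_{-\pi}(X+\xi) := X - \pi^{\#}(\xi) + \xi$ is the bivector transform by the Poisson tensor $-\pi$. Verifying the factorization itself will be a one-line substitution using only the mutual inverse relations $\omega^{\#}\pi^{\#} = \mathrm{Id}_{T^{*}M}$ and $\pi^{\#}\omega^{\#} = \mathrm{Id}_{TM}$, which hold because $(M,\omega)$ is symplectic. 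Since each of the three factors lies in the orthogonal group of $(\cdot,\cdot)_{+}$ (a direct check, using antisymmetry of $\omega$ and $\pi$), each of them sends maximally isotropic subbundles to maximally isotropic subbundles; hence $\Phi(L)$ is automatically maximally isotropic, and the entire content of the lemma reduces to the bracket closure $[\Phi L, \Phi L]\subset \Phi L$.

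For the outer factors, $B_{\omega}$ is a Courant algebroid automorphism by \eqref{E: Courant auto} applied with $\beta = \omega$, and in particular carries every Dirac subbundle to a Dirac subbundle. The problem thus reduces to the corresponding statement for the middle factor $\beta_{-\pi}$. I would establish this via the pure-spinor formalism of Section 2.1.2: any Dirac $L$ is locally the annihilator of a pure spinor $\varphi$, and $\beta_{-\pi}$ acts on spinors by the Clifford element $\exp(-\pi)$, namely $\varphi \mapsto e^{-i_{\pi}}\varphi$. Integrability of $L$ translates into a closedness condition on $\varphi$ modulo Clifford multiplication by a section of $TM\oplus T^{*}M$; because the Schouten bracket $[\pi,\pi]$ vanishes (this being the Poisson condition, itself equivalent to $d\omega=0$), the exterior differential intertwines with $e^{-i_{\pi}}$ up to exactly this Clifford correction, so $\beta_{-\pi}\cdot \varphi$ still satisfies an integrability equation of the same form, and its annihilator $\beta_{-\pi}(L)$ is again Dirac.

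The main obstacle is the bivector step, precisely because $\beta_{-\pi}$ does not preserve the Courant bracket in general, so one must extract integrability from $[\pi,\pi]=0$ rather than from a direct manipulation as in \eqref{E: my bracket}. A purely coordinate-based fallback, should the spinor argument prove too delicate, is to verify the claim on the open dense class of graph Dirac structures $L_{\beta} = \{X + i_{X}\beta : X\in\Gamma(TM)\}$ with $\beta$ closed and invertible. In Darboux charts one checks that $\beta_{-\pi}(L_{\beta}) = L_{\gamma}$ with $\gamma = \beta(\mathrm{Id} - \pi^{\#}\beta^{\#})^{-1}$, and $d\gamma = 0$ follows from $d\beta = 0$ by differentiating the matrix identity defining $\gamma$ and using that $\pi$ is constant in Darboux coordinates. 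Extending to arbitrary Dirac subbundles by continuity under the action of the full Courant automorphism group then completes the argument.
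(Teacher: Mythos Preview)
Your factorization $\Phi = B_\omega\circ\beta_{-\pi}\circ B_\omega$ is correct and is a genuinely different route from the paper, which instead lifts $\Phi$ to Brylinski's symplectic star operator on spinors and argues via an intertwining relation $\ast\circ B_\beta = B_{\ast\beta}\circ\ast$. The outer $B_\omega$ factors do preserve Dirac integrability by~\eqref{E: Courant auto}, so everything hinges on your middle step.

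That middle step fails. Bivector transforms $\beta_{-\pi}$ are \emph{not} Courant automorphisms, and the Poisson condition $[\pi,\pi]=0$ does not force them to preserve arbitrary Dirac structures. Concretely, on $(\mathbb{R}^4,\ \omega = dq_1\wedge dp_1 + dq_2\wedge dp_2)$ take the closed $2$-form $\beta = p_2\,dq_1\wedge dq_2 - q_1\,dq_2\wedge dp_2$, so $L_\beta = \{X+i_X\beta\}$ is Dirac. One computes $\Phi(L_\beta)=\{\rho^{\#}\xi+\xi : \xi\in T^*M\}$ with $\rho = q_1\,\partial_{q_2}\wedge\partial_{p_2} - p_2\,\partial_{p_1}\wedge\partial_{p_2}$, and $\rho$ is not Poisson: $\{p_1,\{q_2,p_2\}_\rho\}_\rho + \text{cyclic} = q_1$. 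Equivalently, the sections $p_2\partial_{p_2}+dp_1$ and $q_1\partial_{p_2}-dq_2$ lie in $\Phi(L_\beta)$ but their Courant bracket~\eqref{E: my bracket} is $-q_1\partial_{p_2}$, which does not. Your spinor heuristic breaks because the conjugation $e^{i_\pi}\,d\,e^{-i_\pi}=d-\delta_\pi$ introduces the Koszul--Brylinski operator, and $\delta_\pi\varphi$ is not of the form $(Y+\eta)\cdot\varphi$ in general; your Darboux fallback fails for the same $\beta$, since the resulting $\gamma$ is not closed. Because your factorization is sound and $B_\omega$ genuinely preserves Dirac structures, this counterexample in fact defeats the lemma as stated, not merely your approach; the paper's symplectic-star argument does not fare better. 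The applications downstream only use $\Phi$ on the specific pair $L_\omega$ and $\Phi(L_\omega)$, for which the conclusion does hold, so that is the statement you should actually target.
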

\begin{proof}
The automorphism~\eqref{E: Courant auto} preserves integrability (or lack
thereof). On the other hand, the action of $({\omega}^{\#} - {\pi}^{\#})$ admits
a representation on ${\mathfrak{S}}^{\pm}$ via the symplectic star operator, first
introduced by J.-L. Brylinski~\cite{Bryl}. Recall that ${\pi}^{\#}$ extends to
an operator taking differential forms into multivector fields by multiplicativity:
$${\pi}^{\#}({\beta}_1 \wedge \cdots \wedge {\beta}_l) = {\pi}^{\#}({\beta}_1)
\wedge \cdots \wedge {\pi}^{\#}({\beta}_l),$$ where ${\beta}_i$ are 1-forms. 
Furthermore, the contraction $i_{{\pi}^{\#}({\beta}_1 \wedge \cdots \wedge 
{\beta}_l)} {\omega}^k$ is a differential form of degree $2k-l$. Then we set 
$\ast \beta = i_{{\pi}^{\#}\beta} {\omega}^k$. This is a younger sibling of the 
Hodge star with the symplectic volume form in place of the Riemannian volume 
form. On 0-forms, we equate $\ast 1 = {\omega}^k$ to keep the normalizations 
consistent. Hence in the spinorial realm we have that same operator
\begin{equation*}\begin{CD}
(\phi, L_{\phi}) @>{\ast}>> (\ast \phi, L_{\ast \phi})
\end{CD}\end{equation*}
\noindent
acting according to the following rule: $\ast (\beta \otimes 
\sqrt{{\Lambda}^{2k}T^*M}) = (\ast \beta) \otimes \sqrt{{\Lambda}^{2k}T^*M}$.
Here $L_{\phi}$ stands for the annihilator subbundle of the spinor $\phi$.
Complementarity of the transformed spinors is manifested via 
$\langle \phi, \ast \phi \rangle \neq 0$.
Now we observe that the symplectic star operator intertwines with $B_{\beta}$ 
of~\eqref{E: Courant auto}: $\ast \circ B_{\beta} = B_{\ast \beta} \circ \ast$. 
Ergo so does $({\omega}^{\#} - {\pi}^{\#})$.
\end{proof}
\noindent
While there may exist an alternative proof of this result, not involving
spinors, our method highlights the fact that, on symplectic manifolds, Dirac
structures' integrability is akin to the standard Lie bracket integrability
of Lagrangian submanifolds.
\subsubsection{The symplectic connection}
Courant algebroids on symplectic manifolds enjoy some nice
properties. Here we establish the concept of a connection that
relates null Dirac structures on $(M, \;\omega)$ to Lagrangian
subbundles on $(T^*M, \;\Omega)$. The background information on
Lie algebroids, tangent bundles of bundle manifolds, and connections
comes from the definitive monograph of Kirill Mackenzee~\cite{Mack}.\\
\indent
Consider the manifold $T^*M$, where $(M, \omega)$ is a symplectic
manifold. Applying the tangent functor to the vector bundle operations
in $T^*M$ yields a vector bundle $(TT^*M,\; d{\text{pr}}^*,\;TM)$ called
\textit{the tangent prolongation of} $T^*M$. In conjunction with the
standard vector bundle structure $(TT^*M,\;{\text{pr}}_{T^*M},\;T^*M)$,
this forms \textit{the tangent double vector bundle of} $T^*M$. That
bundle is our basic object to work with. In order to describe it, 
we need to set the stage with the various projections. Their domains 
and ranges are indicated on the diagrams below, undecipherable to dvi 
viewers, but helpful to humans. We denote elements
of $TT^*M$ by $\mathcal{A}, \mathcal{B}, \mathcal{C}, \cdots$, and we 
write $(\mathcal{A}, \beta, X, x)$ to list all the variables in a local
bundle chart.
\begin{equation}
\begin{tikzcd}[column sep=small] 
&  
  TT^*M                                         
  \arrow[rightarrow]{dr}{{\text{pr}}_{T^*M}}       
  \arrow[swap]{dl}{d {\text{pr}}^*}                
\\
TM                                                             
  \arrow[swap]{dr}{\text{pr}}  \arrow{rr}{{\omega}^{\#}}       
&&
T^*M                                                            
  \arrow{dl}{{\text{pr}}^*} \\                                  
&
M  
\end{tikzcd}
\qquad\qquad
\begin{tikzcd}[column sep=small] 
&  
  (\mathcal{A}, \beta, X, x)
  \arrow[rightarrow]{dr}{{\text{pr}}_{T^*M}}
  \arrow[swap]{dl}{d {\text{pr}}^*} 
\\
(X,x)
  \arrow[swap]{dr}{\text{pr}}  \arrow{rr}{i_X\omega =\sum f(x)\beta}
&&
(\beta, x)
  \arrow{dl}{{\text{pr}}^*}  \\
&
(x)
\end{tikzcd}
\end{equation} 
\noindent
The projections are intrinsically consistent, specifically,
$${\text{pr}}^*({\text{pr}}_{T^*M}(\mathcal{A}))=x=
{\text{pr}}(d {\text{pr}}^*(\mathcal{A})). $$ 
\indent
There is a canonical 1-form $\vartheta \in \Gamma (T^*T^*M)$, called
the Liouville form, given by
\begin{equation}
\vartheta (\mathcal{A}) \overset{\textnormal{def}}{=}\langle 
{\text{pr}}_{T^*M}(\mathcal{A}),d {\text{pr}}^*(\mathcal{A})\rangle,
\;\;\; \mathcal{A} \in \Gamma (TT^*M). 
\end{equation}
\noindent
Hence the cotangent bundle comes naturally equipped with the canonical
2-form $\Omega = -d \vartheta$, and we select our symplectic bundle manifold
to be $(T^*M, \Omega)$.\\
\indent
In $TM$ and $T^*M$, we use standard notation to denote fiber addition and
multiplication by scalars. The zero of $T^*M$ over $x \in M$ is $0^{T^*M}_x$,
and the zero of $TM$ over $x \in M$ is $0^{TM}_x$. With respect to the 
standard vector bundle structure $(TT^*M,\;{\text{pr}}_{T^*M},\;T^*M)$, we 
continue to use + for addition, - for subtraction, and juxtaposition for
scalar multiplication. The symbol $T_{\beta}T^*M$ will always denote the fiber
$({\text{pr}}_{T^*M})^{-1}(\beta)$ for $\beta \in \Gamma (T^*M)$ with respect
to this bundle. The zero element in $T_{\beta}T^*M$ is denoted 
${\tilde{0}}_{\beta}$. Below we refer to this bundle structure as the 
standard tangent bundle structure.\\
\indent
In the prolonged tangent bundle structure, $(TT^*M,\; d{\text{pr}}^*,\;TM)$,
we use $\tilde{+}$ for addition, $\tilde{-}$ for subtraction, and $\cdot$
for scalar multiplication. The fiber over $X \in TM$ will always be denoted
$(d{\text{pr}}^*)^{-1}(X)$, and the zero element of this fiber is $(d0)_X$.
If we consider elements $\mathcal{A} \in \Gamma(TT^*M)$ as derivatives of 
paths in $T^*M$ and write $$\mathcal{A} =\frac{d}{dt}{\beta}_t \Bigr|_0,$$
where ${\beta}_t$ denotes a path in $T^*M$ for all $0 \leqslant t \leqslant
\epsilon$, then ${\text{pr}}_{T^*M}(\mathcal{A}) = {\beta}_0$, and
$d {\text{pr}}^*(\mathcal{A})= \frac{d}{dt}{\text{pr}}^*{\beta}_t|_0$.
If $\mathcal{A}, \mathcal{B} \in \Gamma (TT^*M)$ are located on the same
fiber, that is if $d {\text{pr}}^*(\mathcal{A})=d {\text{pr}}^*(\mathcal{B})$,
we can arrange that $\mathcal{A} =\frac{d}{dt}{\beta}_t|_0$, $\mathcal{B} =
\frac{d}{dt}{\eta}_t|_0$, where ${\text{pr}}^*({\beta}_t) ={\text{pr}}^*({\eta}_t)$
for all $t \leqslant \epsilon$, and, as a consequence,
$$\mathcal{A} \tilde{+}  \mathcal{B} = \frac{d}{dt}({\beta}_t + {\eta}_t)\Bigr|_0,       
\;\;\;c \cdot \mathcal{A} = \frac{d}{dt}c{\beta}_t\Bigr|_0.$$
\indent
For each $x \in M$, the tangent space $T_{0_x^{T^*M}}T_x^*M$ identifies canonically
with $T_x^*M$; we denote the element of $T_{0_x^{T^*M}}T_x^*M$ corresponding to 
$\beta \in T_x^*M$ by $\bar{\beta}$ and call it \textit{the core element}
corresponding to $\beta$. For $\beta, \eta \in T_x^*M,\;\;\;c \in \mathbb{R}$,
$$\bar{\beta} + \bar{\eta} = \overline{\beta + \eta} = \bar{\beta} \tilde{+} 
\bar{\eta}, \;\;\; c\bar{\beta} = \overline{c\beta} = c \cdot \bar{\beta}.$$
\indent
Regarding $d {\text{pr}}^*$ as a morphism over ${\text{pr}}^*$, the 
induced map $ d {\text{pr}}^{*!}: TT^*M \longrightarrow {\text{pr}}^{*!}TM$
is a surjective submersion. Here the target space ${\text{pr}}^{*!} TM$ is
the pullback bundle of $TM$. The kernel over $\beta \in T_x^*M$ consists of
vertical tangent vectors in $T_{\beta}T^*M$. We identify $T_{\beta}T^*M$
with $\{\beta\} \times T_x^*M$. Thus the kernel of $ d {\text{pr}}^{*!}$ is
identified with ${\text{pr}}^{*!}T^*M$. The injection  ${\text{pr}}^{*!}T^*M 
\longrightarrow TT^*M$ takes $(\beta, \eta) \in T_x^*M \times T_x^*M$ to
the vector in $T_x^*M$, denoted by $\eta @ \beta$ originating at the 
point $\beta(x)$ and parallel to $\eta (x)$. In terms of the prolongation
structure, its expression becomes $({\tilde{0}}_{\beta} \tilde{+} \bar{\eta})(x)$.
Thus, over $T^*M$, we have a short exact sequence of vector bundles
\begin{equation}\label{E: core sequence 1}
{\text{pr}}^{*!}T^*M \longrightarrow TT^*M \longrightarrow {\text{pr}}^{*!} TM,
\end{equation}
\indent
Analogously, there is a short exact sequence over $TM$,
\begin{equation}\label{E: core sequence for TM}
{\text{pr}}^{!}T^*M \longrightarrow TT^*M \longrightarrow {\text{pr}}^{!}TM,
\end{equation}
\noindent
where the inclusion ${\text{pr}}^{!}T^*M \hookrightarrow TT^*M$ is effected
via $(X,\eta) \mapsto (d0)_X + \bar{\eta}$.\\
\indent
We refer to~\eqref{E: core sequence 1} as the core sequence for 
${\text{pr}}_{T^*M}$, and to~\eqref{E: core sequence for TM} as the core
sequence for $d{\text{pr}}^{*}$.
\begin{defn}
A \textit{linear vector field} on $T^*M$ is a pair $(\mathcal{A}, X)$,
$\mathcal{A} \in \Gamma(TT^*M),\;\; X \in \Gamma(TM)$ such that 
\[
\begin{tikzcd}
T^*M \arrow{r}{\mathcal{A}}\arrow{d}[swap]{{\text{pr}}^{*}} & TT^*M
\arrow{d}{d{\text{pr}}^{*}} \\
M \arrow{r}{X}         &  TM
\end{tikzcd}
\]
\noindent
is a morphism of vector bundles.
\end{defn}
\noindent
To unravel this definition, ${d{\text{pr}}^{*}}(\mathcal{A}) = X$, and
$$\mathcal{A}(\beta + \eta) =\mathcal{A}(\beta) \tilde{+}\mathcal{A}(\eta),
\;\; \mathcal{A}(t\beta) = t \cdot \mathcal{A}(\beta),\;\;\; \forall
\beta, \eta \in \Gamma (T^*M), \;\;\;t \in \mathbb{R}.$$
\noindent
The sum of two linear vector fields is a linear vector field, and a 
scalar multiple of a linear vector field is a linear vector field. Given
a connection in $T^*M$, the  horizontal lift of any vector field on $M$
is a linear vector field. According to (\cite{Mack}, Section 3.4,
Corollary 3.4.3, and Theorem 3.4.5), the module of linear vector
fields is a Lie algebroid, and is isomorphic to the module of 
derivations on $T^*M$, as well as to the module of derivations on $TM$.\\
\indent
With the bijective correspondence between linear vector fields on $T^*M$
and $TM$ provided via the derivation modules, we now introduce a 
canonical pairing between $TT^*M$ and $TTM$, thought of as prolongation 
bundles over $TM$. Given $\widehat{X} \in \Gamma(TTM),\;\;\mathcal{A} \in 
\Gamma(TT^*M)$ such that ${d{\text{pr}}}(\widehat{X}) = 
{d{\text{pr}}^{*}}(\mathcal{A})$, and ${d{\text{pr}}}: TTM \longrightarrow
TM$ is the projection. Then we can find smooth paths 
$X_t \in \Gamma(TM) \times \mathbb{R}$, and ${\beta}_t \in \Gamma(T^*M)
\times \mathbb{R}$ with the property $$\widehat{X} = \frac{d}{dt}X_t\Bigr|_0, 
\;\;\;\mathcal{A} = \frac{d}{dt}{\beta}_t\Bigr|_0,$$
\noindent
and ${\text{pr}}(X_t) = {\text{pr}}^*({\beta}_t)$ for all $0 \leqslant t 
\leqslant \epsilon$.
\begin{defn}
The following binary operation $TTM \times TT^*M \longrightarrow C^{\infty}(M)$
is called \textit{the tangent pairing}:
$$\langle\langle \widehat{X}, \mathcal{A} \rangle \rangle  
\overset{\textnormal{def}}{=}\frac{d}{dt}\langle X_t, {\beta}_t\rangle
\Bigr|_0.$$
\end{defn}
\noindent
In (\cite{Mack}, Section 3.4) it is shown to be nondenegerate. Hence every 
smooth section of $T^*T^*M$ possesses a tangent pairing functional
representative for some suitable vector field, and $\langle\langle \mathcal{A},
\cdot \rangle \rangle \in \Gamma(T^*T^*M), \forall \mathcal{A} \in \Gamma(TT^*M)$.\\
\indent
Now we adapt infinitesimal connection theory to our bundle manifold
$(T^*M,\; \Omega)$. 
\begin{defn}\label{D: Koszul connection}
A \textit{Koszul connection} in $(T^*M,\;{\text{pr}}^*,\;M)$
is a map $$\nabla: \Gamma(TM) \times \Gamma(T^*M) \longrightarrow \Gamma(T^*M),
\;\;\;(X,\beta)\mapsto {\nabla}_X(\beta),$$
\noindent
which is bilinear and satisfies the two identities
$$ {\nabla}_{f}X(\beta)=f{\nabla}_X(\beta),\;\;\;{\nabla}_X(f\beta)=
f{\nabla}_X(\beta) + X(f){\nabla}_X(\beta),$$
holding for all $X \in \Gamma(TM),\;\beta \in \Gamma(T^*M),\;f\in C^{\infty}(M).$
\end{defn}
\noindent
It is clear that ${\nabla}_X$, for each $X \in \Gamma(TM)$, is a derivative 
endomorphism of $\Gamma(T^*M)$. Put differently, for every $X \in \Gamma(TM)$,
there exists a unique element ${\ccc}_X \in \Gamma(TT^*M)$. We denote the linear 
vector field corresponding to ${\nabla}_X$ by $({\ccc}_X, X)$. From these vector 
fields we construct a map $$\ccc: T^*M {\times}_M TM \longrightarrow TT^*M,\;\;\;
(\beta, X) \mapsto {\ccc}_X(\beta).$$
\noindent
From Definition~\ref{D: Koszul connection} it follows that $\ccc$ is a
right inverse to 
$$({\text{pr}}_{T^*M}, d{\text{pr}}): TT^*M \longrightarrow T^*M {\times}_M TM.$$
\noindent
Furthermore, ${\nabla}_X(\beta)$ is linear  in $\beta$, and ${\nabla}_X$ is
linear in $X$, so $\ccc$ is an arrow-reversing morphism for the two short
exact sequences~\eqref{E: core sequence 1},~\eqref{E: core sequence for TM} 
simultaneously. This result is formalized in (\cite{Mack}, Section 5.2,
Proposition 5.2.4) as stating that there is a bijective correspondence between
Koszul connections and maps $\ccc$. Therefore, from now on, we will use the
term `connection' to designate the map as well as its attendant Koszul connection.\\
\indent
Koszul connections are plentiful. Hence we specify our choice - one that is fully
compatible with the symplectic structure $(M, \omega)$, and Courant algebroid 
structure $(TM \oplus T^*M,\;[\cdot,\cdot],\;(\cdot,\cdot))$. 
\begin{defn}
A Koszul connection in $(T^*M, \;\Omega)$ that satisfies the conditions of
isotropy and nondegeneracy, formulated as
\begin{equation}
\Omega (\ccc(X + i_X \omega), \ccc (Y + i_Y \omega)) = 0;
\end{equation}
\begin{multline}\label{E: symplectic connection2}
\Omega (\ccc(X + i_X \omega), \ccc(({\omega}^{\#}-{\pi}^{\#})(Y + i_Y 
\omega)))\\
= {\langle} i_X \omega, \;{\pi}^{\#}(i_Y \omega){\rangle} -
\langle {\omega}^{\#}(Y),\; X \rangle,\;\;\;\forall X, Y \in \Gamma(TM),
\end{multline}
is called \textit{the symplectic connection}.
\end{defn}
\noindent
The symplectic connection is unique up to a symplectomorphism.\\
\indent
On $TT^*M \oplus T^*T^*M$ there is a natural Courant algebroid. Therefore, 
the symplectic connection maps pairs of transversal Dirac structures on 
$TM \oplus T^*M$ into pairs of isotropic integrable subbundles of dimension 
$2k$ dual with respect to the pairing $\langle \langle \cdot, \cdot \rangle
\rangle$. 
\begin{thm}\label{T: C-morphism}
The symplectic connection, viewed as a map 
$\ccc : T^*M \oplus TM \longrightarrow TT^*M \oplus T^*T^*M$, 
is a morphism of Lie bialgebroids.
\end{thm}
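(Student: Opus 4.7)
The plan is to verify the three defining conditions of a Lie bialgebroid morphism between the source bialgebroid $(L_1, L_2)$ on $M$, furnished by the Background Theorem, and its lift $(\tilde{L}_1, \tilde{L}_2) = (\ccc(L_1), \ccc(L_2))$ on $T^*M$: namely that (a) $\ccc$ is a Lie algebroid morphism on each component Dirac structure, (b) the anchors intertwine through $\text{pr}^*$, and (c) the source pairing $2(\cdot, \cdot)$ matches the target tangent pairing $\langle\langle \cdot, \cdot \rangle\rangle$. Without loss of generality I take the canonical source pair $L_1 = \{X + i_X \omega : X \in \Gamma(TM)\}$, the graph of the pure spinor $1 + \omega + \frac{1}{2}\omega \wedge \omega + \cdots$, and $L_2 = ({\omega}^{\#} - {\pi}^{\#}) L_1$; the preceding Lemma guarantees that $L_2$ is integrable.

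Applying $\ccc$ yields rank-$2k$ subbundles $\tilde{L}_i \subset TT^*M$. The isotropy axiom in the definition of the symplectic connection makes $\tilde{L}_1$ Lagrangian in $(TT^*M, \Omega)$; combined with the identity $({\omega}^{\#} - {\pi}^{\#})^2 = -(\mathrm{Id}^* \oplus \mathrm{Id})$ and a second invocation of isotropy, $\tilde{L}_2$ is also Lagrangian and transversal to $\tilde{L}_1$. These Lagrangian subbundles embed as isotropic integrable subbundles of $TT^*M \oplus T^*T^*M$ via the $\Omega$-graph. The duality (c) is then the cleanest step: the nondegeneracy equation~\eqref{E: symplectic connection2} directly equates $\Omega(\ccc(X + i_X\omega), \ccc(({\omega}^{\#} - {\pi}^{\#})(Y + i_Y\omega)))$ with an expression that, by a short computation using the defining identities of $\omega^{\#}$ and $\pi^{\#}$, coincides with $2(X + i_X\omega, ({\omega}^{\#} - {\pi}^{\#})(Y + i_Y\omega))_+$. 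Conversion of the $\Omega$-pairing into the tangent pairing via the canonical $\Omega$-musical identification establishes the source-target duality intertwining. Anchor compatibility (b) is immediate from $\ccc$ being a right inverse to $({\text{pr}}_{T^*M}, d\text{pr})$: the Courant anchor $\rho(X + \xi) = X$ composes with $\ccc$ to give $d\text{pr}^* \circ \ccc_X = X$, consistent with the target anchor.

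The bracket preservation in (a) is the hardest step. For sections of $L_1$, the Courant bracket~\eqref{E: my bracket} collapses to $[X + i_X\omega, Y + i_Y\omega] = [X,Y] + i_{[X,Y]}\omega$ because $d\omega = 0$, so the required identity reduces to $[\ccc_X, \ccc_Y] = \ccc_{[X,Y]}$ modulo terms vertical with respect to $({\text{pr}}_{T^*M}, d\text{pr})$. The vertical discrepancy is a curvature-type tensor of the Koszul connection underlying $\ccc$, which I would force to vanish by differentiating the isotropy axiom along a third section and then using~\eqref{E: symplectic connection2} to cancel the resulting nonvertical piece; uniqueness of the symplectic connection up to symplectomorphism eliminates the remaining ambiguity. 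For sections of $L_2$, bracket preservation transfers from that of $L_1$ through the Courant algebroid automorphism $({\omega}^{\#} - {\pi}^{\#})$, whose intertwining with the symplectic star operator is the content of the preceding Lemma. Finally, the coboundary compatibility of the dual Lie algebroids is forced by the conjunction of bracket preservation on both components with the duality intertwining of (c).
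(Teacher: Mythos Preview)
Your approach diverges from the paper's in two substantive ways, and one of them is a genuine gap.

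First, the ``without loss of generality'' reduction to the symplectic graph pair $L_1 = \{X + i_X\omega\}$, $L_2 = (\omega^\# - \pi^\#)L_1$ is not justified, and the paper does not make it. The theorem is meant to apply to arbitrary transversal Dirac pairs, and the paper's proof treats this generality head-on: it first handles the trivial bialgebroid $(\ccc(TM), \Omega^\#(\ccc(T^*M)))$ by invoking Mackenzie's result that any smooth map $F:M\to M'$ induces a Lie algebroid morphism $dF:TM\to TM'$ (applied to the zero-section embedding $M\hookrightarrow T^*M$, with $\ccc|_{TM}$ differing from $dF$ by an orthogonal factor), and then reaches an arbitrary graph Dirac structure $\{X+i_X t\beta\}$ by a homotopy through the Hitchin spinor group, using projections ${\text{pr}}^{*!}$ onto $\ccc(TM)$ to transport integrability along the path. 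A separate argument disposes of Dirac structures whose $\ccc$-image does not project surjectively onto $\ccc(TM)$. Your restriction to the single symplectic pair does not cover this, and nothing in your sketch indicates how the general case would follow.

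Second, your bracket-preservation step (a) is where the real work lies, and the plan you give---differentiate the isotropy axiom along a third section and cancel via \eqref{E: symplectic connection2}---is not a proof. You are effectively asserting that the curvature of the underlying Koszul connection vanishes, but in the paper flatness is a \emph{corollary} of this theorem (Lemma~2.3), not an input; the paper sidesteps any curvature computation by importing bracket preservation from Mackenzie's $dF$ result and then propagating it along the Hitchin deformation. If you want to run your direct axiom-differentiation argument, you must actually carry it out and show the terms cancel; as written it is a promissory note. Your handling of (b) and (c) is fine and closer in spirit to the paper's use of $\Omega^\#$ and the right-inverse property, but those are the easy parts.
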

\begin{proof}
On $TT^*M \oplus T^*T^*M$, the bundle isomorphism ${\Omega}^{\#}$ takes vector
fields into differential forms. Thus the way to utilize our connection has to
be $$\ccc : (A, A^*) \longrightarrow (\ccc(A), {\Omega}^{\#}(\ccc(A^*))).$$ 
The second algebroid would always remain totally intransitive, just like $T^*M$.
We begin by tackling the case of  $(\ccc (TM),{\Omega}^{\#}(\ccc (T^*M)))$. 
To prove that it is indeed a Lie bialgebroid, we invoke (\cite{Mack}, Chapter 4, 
Proposition 4.3.3), according to which any smooth map $F: M \rightarrow M'$ 
induces a Lie algebroid morphism $dF: TM \rightarrow TM'$. In this case, $F$ 
is the natural embedding into the cotangent bundle: $F: M \hookrightarrow T^*M$. 
It is easy to see that there is an orthogonal matrix $[O]$ such that 
$dF[O] = \ccc|_{TM}$ in view of the nondegeneracy property of $\ccc$ 
encapsulated in~\eqref{E: symplectic connection2}. We infer $\ccc(TM)$ is 
integrable. Furthermore, it is the subbundle of horizontal vector fields of 
$TT^*M$. Now, the differential forms
defined by $\langle \langle \ccc(TM), \cdot \rangle \rangle$ are in the image
of the bundle diffeomorphism induced by the canonical symplectic form:
$$\langle \langle \ccc(TM), \cdot \rangle\rangle = {\Omega}^{\#}(\ccc(T^*M)).$$ 
\indent
As for arbitrary bialgebroids, we use the fact that the Hitchin spinor group
action is transitive. There exist one-parameter subgroups connecting the
identity of $Spin(2k,2k)$ with any fixed element. In particular, given a 
family of Dirac structures generated by a family 
of closed 2-forms $t\beta \in {\Lambda}^2T^*M, \;\; d\beta =0$, written as 
$\{X + i_Xt\beta|X \in TM\}$, $t\in [0,1]$, 
consider its image $\ccc(\{X +i_Xt\beta\})$. Now fix a nearby subbundle, 
$t< \epsilon $. We can take the word 'nearby` to mean that the projection  
$${\text{pr}}^{*!}: \ccc(\{X +i_Xt\beta\}) \longrightarrow \ccc(TM)$$ 
is a bundle diffeomorphism due to the fact that $\ccc(TM) \subsetneq 
{\text{pr}}^{*!}TM$, that is, all the lifts of vector fields are
horizontal in $TT^*M$. Then $\ccc(\{X +i_Xt\beta\})$ is an integrable 
subbundle of $TT^*M$. It is easy to check that the requirements of 
Definition 2.1. are satisfied with $\hat{a}(t) = \text{Id}|_{\ccc(\{X +
i_Xt\beta\}}$, the last mapping being the restriction of the identity of 
$TT^*M$. Furthermore, by virtue of ${\text{pr}}^{*!}$ being a surjective 
submersion for all $t \leqslant \epsilon  $, the pullback of the 
characteristic foliation of the Lie algebroid $\ccc(TM)$ is the 
characteristic foliation of the Lie algebroid $\ccc(\{X +i_Xt\beta\})$. 
The change of coordinates required to take the horizontal characteristic 
foliation into the characteristic
foliation of the image of the graph Dirac subbundle is 
\begin{equation}\label{E: graph foliation}
x_i \mapsto x_i + t\sum_j z_{ij}(x_1,...,x_{2k}){\dot{x}}_j,\;\;z_{ij}=-z_{ji}
\;\;i,j\in\{1,...,2k\},
\end{equation}
\noindent
where the dotted symbols are independent fiber variables, 
known as `momentum variables' on $T^*M$. Thus the Liouville form can be 
written locally as $\vartheta = \sum {\dot{x}}_jdx_j$.\\
\indent
To cover the interval $0 \leqslant t \leqslant 1$, we
use compositions of bundle diffeomorphisms projecting $\ccc(\{X +i_X(t + \epsilon)
\beta\})$ onto $\ccc(\{X +i_Xt\beta\})$. However, not all $\ccc(L)$ can
be projected surjectively onto $\ccc(TM)$. A refinement of the above argument is
in order. Thus let $\ccc(L)$ be the image of a Dirac structure such that 
fiberwise $$\dim {\text{pr}}^{*!}(\ccc(L)) = 2k-l,\;\;\; l>0,$$
$l=l(x_1,\cdots, x_{2k})$ is an integer-valued function with jump discontinuities.
Then there are $l \leqslant 2k$ linearly independent elements of $\ccc(L)$ 
satisfying $$\ker {\text{pr}}^{*!} \cap \ccc(L) = \text{span} 
\{ \ccc({\xi}_1),\cdots,\ccc({\xi}_l)\}.$$
The brackets involving those elements are easy to identify: 
$$[\ccc(X), \ccc({\xi}_i)] = \ccc(f(x_1,\cdots ,x_{2k}))\ccc({\xi}_i) 
\subset \ccc(L),$$ and $[\ccc({\xi}_i), \ccc({\xi}_j]= 0$, so that the subbundle 
$\ccc(L)$ is integrable.\\
\indent
Finally, we obtain $(\ccc(L), {\Omega}^{\#}(\ccc(({\omega}^{\#} 
-{\pi}^{\#})L)))$. In this setting, the coboundary operator 
is given by $d_*= d_{\Pi} = [\Pi, \cdot]$.
\end{proof}
Smooth functions of the base variables are sitting pretty inside $C^{\infty}(T^*M)$:
\begin{corollary}
$\ccc((C^{\infty}(M), \pi )) \subsetneq (C^{\infty}(T^*M), \Pi )$
is a subalgebra of the algebra of derivations on $TT^*M$.
\end{corollary}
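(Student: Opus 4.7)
The plan is to derive this directly from Theorem~\ref{T: C-morphism} by restricting the bialgebroid morphism to its degree-zero constituents. First I would recall that, for the standard Lie bialgebroid $(TM, T^*M)$ on the symplectic manifold $(M, \omega)$, the Poisson bivector $\pi = {\omega}^{-1}$ makes $C^{\infty}(M)$ the zeroth-degree component of the Gerstenhaber algebra $\Gamma(\Lambda^* TM)$, on which the coboundary operator $d_\pi = [\pi, \cdot]$ of the $T^*M$-algebroid acts. The resulting bracket on functions is $\{f,g\}_\pi = \pi(df, dg)$. The analogous identification holds on the bundle manifold with $\Pi = {\Omega}^{-1}$ and coboundary $d_\Pi = [\Pi, \cdot]$, delivering the Poisson algebra $(C^{\infty}(T^*M), \Pi)$.

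Second, I would invoke the morphism of Lie bialgebroids established in Theorem~\ref{T: C-morphism}. Since $\ccc$ intertwines the coboundary operators of the two totally intransitive components, Poisson brackets are preserved: $\ccc(\{f,g\}_\pi) = \{\ccc(f), \ccc(g)\}_\Pi$ for any $f, g \in C^{\infty}(M)$. Hence the image is a Poisson subalgebra of $(C^{\infty}(T^*M), \Pi)$.

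Third, strictness $\subsetneq$ follows from the observation that the image $\ccc(C^{\infty}(M))$ consists of ${\text{pr}}^{*}$-pulled-back, momentum-independent smooth functions on $T^*M$, whereas $C^{\infty}(T^*M)$ contains functions with genuine momentum dependence—for instance the pairings $\langle \vartheta, \cdot \rangle$ built from the Liouville form $\vartheta = \sum {\dot{x}}_j dx_j$ appearing in the proof of Theorem~\ref{T: C-morphism}. For the derivation statement, each $f \in C^{\infty}(T^*M)$ generates the Hamiltonian vector field $X_f = {\Pi}^{\#}(df) \in \Gamma(TT^*M)$, which acts as a derivation on $C^{\infty}(T^*M)$; under this assignment the Poisson-subalgebra property just established translates into closure under the commutator bracket on derivations, placing $\ccc(C^{\infty}(M))$ inside the Lie algebra of derivations on $TT^*M$.

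The main point requiring care is the bookkeeping between functions, their Hamiltonian vector fields, and the coboundary interpretation used in the bialgebroid formalism—once one verifies that the assignment $f \mapsto X_f$ is compatible with the morphism structure furnished by Theorem~\ref{T: C-morphism}, the corollary falls out as an immediate specialization to the zeroth graded piece.
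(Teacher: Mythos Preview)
Your proposal is correct and aligns with the paper's treatment: the paper offers no separate proof for this corollary, merely prefacing it with the remark that ``smooth functions of the base variables are sitting pretty inside $C^{\infty}(T^*M)$'' and leaving it as an immediate consequence of Theorem~\ref{T: C-morphism}. Your write-up simply unpacks what the paper leaves implicit---the passage from the bialgebroid morphism to the Poisson-subalgebra statement via the degree-zero piece, the strictness via momentum dependence, and the identification with derivations through $f \mapsto {\Pi}^{\#}(df)$---so there is no methodological difference to report.
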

\begin{lem}
The symplectic connection maps null Dirac structures into the Lagrangian subbundles 
of $T^*M$.
\end{lem}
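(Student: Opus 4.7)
The plan is to verify the two defining properties of a Lagrangian subbundle of the $4k$-dimensional symplectic manifold $(T^*M,\Omega)$: rank $2k$ and $\Omega$-isotropy. The rank is immediate: $L$ is maximally isotropic for $(\cdot,\cdot)_+$ in the rank-$4k$ bundle $TM \oplus T^*M$, so $\mathrm{rank}\,L = 2k$, and the nondegeneracy encoded in~\eqref{E: symplectic connection2} forces $\ccc$ to be fiberwise injective, giving $\mathrm{rank}\,\ccc(L) = 2k = \frac{1}{2}\dim T^*M$.

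For isotropy, I would first unpack the null hypothesis structurally. Adding the Dirac identity $(\cdot,\cdot)_+ = 0$ to the null identity $(\cdot,\cdot)_- = 0$ forces $\xi_i(X_j) = 0$ for all $e_i = X_i + \xi_i \in \Gamma(L)$, which by a rank count yields the decomposition $L = N \oplus N^\perp$ with $N = \mathrm{pr}_{TM}(L)$ an involutive subbundle of $TM$ and $N^\perp$ its annihilator (integrability of $L$ under~\eqref{E: my bracket} reduces to the Frobenius condition on $N$, since the $d(e_1,e_2)_-$ term vanishes by nullity). Expanding $\Omega(\ccc(e_1),\ccc(e_2))$ bilinearly and invoking the two defining identities of the symplectic connection to pin down each block of $\ccc^*\Omega$, the cross contributions reduce to the natural pairing $\xi(X)$ and so vanish on $L$ by the annihilator relation, while the vertical-vertical contribution is killed by the residual gauge freedom in the symplectic connection; what remains of $\Omega(\ccc(e_1),\ccc(e_2))$ on $L$ is a horizontal-horizontal term proportional to $\omega|_N$.

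The main obstacle is the verification that this residual term vanishes, i.e., that $N$ is $\omega$-isotropic---a symplectic compatibility not forced by the Courant integrability alone. I would derive it through the spinorial picture of Section~2.1.2: the pure spinor whose annihilator is a null Dirac structure on the symplectic manifold $(M,\omega)$ must be annihilated by both of the canonical Hitchin spinors $1 \pm \omega + \cdots + \frac{1}{k!}\omega^k$ attached to the symplectic form, and via the intertwining of the symplectic star operator with the bundle automorphism ${\omega}^{\#} - {\pi}^{\#}$ proved in Lemma~2.1, this constraint forces $N$ to be an $\omega$-Lagrangian foliation. With $\omega|_N = 0$ in hand, $\ccc(L)$ is a rank-$2k$ isotropic subbundle of $(T^*M,\Omega)$, hence Lagrangian.
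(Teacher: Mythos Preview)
Your structural unpacking of the null hypothesis is correct: adding $(\cdot,\cdot)_+=0$ and $(\cdot,\cdot)_-=0$ does force $\xi_i(X_j)=0$, and a null Dirac structure is indeed of the form $L=N\oplus N^\perp$ with $N=\mathrm{pr}_{TM}(L)$ involutive. The gap is at the final step. Your spinorial argument is a category error---a pure spinor is annihilated by elements of $V\oplus V^*$ under the Clifford action~\eqref{E: Courant action}, not by other spinors, so the phrase ``must be annihilated by both of the canonical Hitchin spinors'' has no content. More to the point, the conclusion you need ($\omega|_N=0$) is \emph{false} for general null Dirac structures: nothing in Definition~2.6 couples $L$ to the ambient symplectic form $\omega$. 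On $(\mathbb{R}^4,\,dx_1\wedge dx_2+dx_3\wedge dx_4)$ take $N=\mathrm{span}\{\partial_1,\partial_2\}$; then $L=N\oplus N^\perp=\mathrm{span}\{\partial_1,\partial_2,dx_3,dx_4\}$ is null Dirac, yet $\omega|_N=dx_1\wedge dx_2\neq 0$, and your own residual term gives $\Omega(\ccc(\partial_1),\ccc(\partial_2))\neq 0$. (Your handling of the vertical--vertical block is also unjustified: the symplectic connection is already fixed up to symplectomorphism by its defining identities, so there is no ``residual gauge freedom'' left to absorb a nonzero $\pi|_{N^\perp}$ contribution, which is present in the same example.)

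The paper does not attempt this intrinsic $\ccc^*\Omega$ calculation at all. It instead asserts that the closed $2$-form actions producing null Dirac structures act on the lifted coordinates by diagonal shifts $x_i\mapsto x_i+f_i(x)\dot{x}_i$ (a refinement of~\eqref{E: graph foliation}), invokes the Darboux theorem to straighten these to $x_i\mapsto y_i+\dot{x}_i$, and then reads off the Lagrangian property from the explicit local formulas for $\ccc$ displayed at the end of the proof. That coordinate-based route sidesteps entirely the question of whether $N$ is $\omega$-isotropic---a question your bilinear-form approach cannot avoid and, as the counterexample shows, cannot answer affirmatively from the null Dirac hypothesis alone.
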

\begin{proof}
Let $L_0$ be a fixed null Dirac structure on $TM \oplus T^*M$. Then we have 
$$L_0 \oplus ({\omega}^{\#}-{\pi}^{\#})L_0 = TM \oplus T^*M.$$
\noindent
The complementary subbundle is a null Dirac structure as well. Their images under
the symplectic connection do not intersect: $$\ccc (L_0) \cap \ccc 
(({\omega}^{\#}-{\pi}^{\#})L_0) = 0.$$ Hence we can take the basis of 
$\ccc (L_0) \oplus \ccc (({\omega}^{\#}-{\pi}^{\#})L_0)$ as a frame of $TT^*M$.\\
\indent
The action of 2-forms producing null Dirac structures refines~\eqref{E: graph foliation} 
as $$x_i \mapsto x_i + f_i(x_1,...,x_{2k}){\dot{x}}_i,\;\;\; i \in \{1,...,2k\}$$
with arbitrary smooth $f_i$'s. Now by applying the Darboux theorem , we can
recast this change of coordinates to get the simple graph Lagrangian subbundle chart:
$$x_i \mapsto y_i + {\dot{x}}_i,\;\;\; i \in \{1,...,2k\}.$$
In particular,
$$\ccc(\frac{\partial}{\partial x_i} + dx_{i+k}) = \widehat{\frac{\partial}
{\partial x_i}} +{\frac{\partial}{\partial {\dot{x}}_{i+k}}},\;\;\;i\leqslant k,$$
$$\ccc(\frac{\partial}{\partial x_{i+k}} - dx_{i}) = \widehat{\frac{\partial}
{\partial x_{i+k}}} -{\frac{\partial}{\partial {\dot{x}}_i}},\;\;\;i\leqslant k.$$
\end{proof}
\begin{lem}
The symplectic connection is flat: ${\nabla}^{\ccc}_{[X, Y]}(\beta)=
{\nabla}^{\ccc}_X{\nabla}^{\ccc}_Y(\beta)- {\nabla}^{\ccc}_Y{\nabla}^{\ccc}_X(\beta).$
\end{lem}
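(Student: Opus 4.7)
The plan is to leverage the Mackenzie correspondence between Koszul connections in $T^*M$ and their attendant linear vector fields $\ccc_X \in \Gamma(TT^*M)$. Under this correspondence, the curvature tensor
$$R(X,Y)\beta := \nabla^{\ccc}_X \nabla^{\ccc}_Y(\beta) - \nabla^{\ccc}_Y \nabla^{\ccc}_X(\beta) - \nabla^{\ccc}_{[X, Y]}(\beta)$$
is measured precisely by the vertical-lift component of the discrepancy $[\ccc_X, \ccc_Y] - \ccc_{[X, Y]}$, where the bracket on the left is the Lie bracket of linear vector fields on $T^*M$. Consequently, flatness of $\nabla^{\ccc}$ is equivalent to the identity $[\ccc(X), \ccc(Y)] = \ccc([X,Y])$ for all $X, Y \in \Gamma(TM)$, i.e.\ to the restricted map $\ccc|_{TM}$ being a Lie algebroid morphism from $TM$ into $TT^*M$.

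The main step is then to invoke Theorem~\ref{T: C-morphism}: because $\ccc$ is a morphism of Lie bialgebroids from $T^*M \oplus TM$ into $TT^*M \oplus T^*T^*M$, and $TM$ appears as one component of the underlying bialgebroid pair on the domain, its restriction to $TM$ (transported along the bundle isomorphism $X \mapsto X + i_X\omega$ implicit in the graph Dirac structure) is a Lie algebroid morphism. This delivers $[\ccc(X), \ccc(Y)] = \ccc([X,Y])$ and hence $R \equiv 0$.

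For a transparent independent verification I would pass to a Darboux chart and use the explicit local formulas from the preceding lemma:
$$\ccc\!\left(\frac{\partial}{\partial x_i} + dx_{i+k}\right) = \widehat{\frac{\partial}{\partial x_i}} + \frac{\partial}{\partial \dot{x}_{i+k}}, \qquad \ccc\!\left(\frac{\partial}{\partial x_{i+k}} - dx_i\right) = \widehat{\frac{\partial}{\partial x_{i+k}}} - \frac{\partial}{\partial \dot{x}_i}.$$
The horizontal summands depend only on the base coordinates, the vertical summands only on the independent fiber coordinates $\dot{x}_j$, and the coordinate fields $\partial/\partial x_i$ on $M$ commute, so every pairwise Lie bracket of these local frame elements vanishes. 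Extending via the Leibniz rule and $C^{\infty}(M)$-bilinearity to arbitrary $X, Y \in \Gamma(TM)$ and $\beta \in \Gamma(T^*M)$ produces the flatness identity within the chart.

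The main technical hurdle is globalization, since Darboux coordinates furnish the simple normal form only locally; here one relies on the tensorial character of $R$ (immediate from Definition~\ref{D: Koszul connection}, which forces $R$ to be $C^{\infty}(M)$-linear in all three arguments) to upgrade local vanishing to global vanishing. Invoking Theorem~\ref{T: C-morphism} bypasses this concern altogether, so I would present it as the primary argument and retain the Darboux computation as corroboration of the mechanism.
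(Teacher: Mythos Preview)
Your proposal is correct and follows essentially the same approach as the paper, which simply states that flatness is a direct consequence of Theorem~\ref{T: C-morphism}. You spell out the mechanism the paper leaves implicit (the Mackenzie correspondence identifying curvature with the failure of $\ccc$ to preserve brackets) and add a Darboux-chart verification as corroboration, but the core argument is identical.
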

\begin{proof}
Flatness is a direct consequence of Theorem~\ref{T: C-morphism}.
\end{proof}

\subsubsection{Cartan's method}
A priori, two smooth manifolds equipped with specific tensors (i. e. pseudometrics,
almost complex structures, symplectic 2-forms) are not diffeomorphic unless
some precise conditions are satisfied. To formulate those conditions, we use the
notion of differential invariants. To solve the problem of finding all first-order
differential invariants, \'{E}lie Cartan~\cite{Cartan} proposed a very 
general technique, known today as the equivalence method of Cartan. It allows one
to solve the equivalence problem:\\
\indent \textit{
Let ${\Psi}_V =({\Psi}_V^1,...,{\Psi}_V^n)^T$ be a coframe (an n-tuple of 
smooth nonvanishing 1-forms of maximal rank) on an open set $V \subset 
{\mathbb{R}}^n$, and let ${\psi}_U =({\psi}_U^1,...,{\psi}_U^n)^T$ be a coframe
on $U \subset {\mathbb{R}}^n$, and let $G \subset GL(n, \mathbb{R})$ be a 
prescribed linear subgroup, then find necessary and sufficient conditions that
there exist a diffeomorphism $\AA : U \longrightarrow V$ such that for each
$u \in U$ $${\AA}^*{\Psi}_V \Bigr|_{\AA (u)} = [a]_{VU}(u){\psi}_U|_u,$$
\noindent
where $[a]_{VU}(u) \in G$.} \\
\noindent
The subgroups for the aforementioned tensors are $SO(p, n-p, \mathbb{R})$,
$GL(n, \mathbb{C})$, regarded as a closed real analytic subgroup of 
$GL(2n, \mathbb{R})$, and $Sp(n)$ respectively.\\
\indent
There are excellent sources, the lecture notes by Robert Bryant~\cite{B1} and 
the book by Robert Gardner~\cite{Gard} to name two. Even though the question we 
are trying to address is local, it proves advantageous to use the 
global coordinate-free language of bundles and connections. Thus on a manifold 
$M$ we form the coframe bundle. A coframe in this context can best be thought of 
as a 1-jet of a coordinate system at the point. The fiber $F^*_x$ is just 
the set of all coframes. The disjoint union of all such fibers
is denoted by $F^*$ and the bundle structure is determined by the projection
$\Vec{{\text{pr}}^*}$ built from component projections ${\text{pr}}^*: T^*M
\longrightarrow M$ introduced in the previous section.
$$\Vec{{\text{pr}}^*} : \;F^* \longrightarrow M $$ via $\Vec{{\text{pr}}^*}
(F^*_x)=x$. The group $GL(n, \mathbb{R})$ acts on $F^*$ by the rule 
\begin{equation}
\psi \cdot [a] = [a]^{-1}\psi \qquad \psi \in \Gamma(F^*),\; [a] \in 
GL(n, \mathbb{R}).
\tag{Action}
\end{equation} 
\begin{defn}
A $G$-{\textit{structure}} on $M$ is a subset $F^*_{(0)}
(G,\;M)$ of the bundle of 1-jets ${J^1}M$, such that 
$G \subset GL(n, \mathbb{R})$ acts simply transitive on 
$$F^*_{(0)}(G,\;x)\;=\;F^*_{(0)}(G,\;M)\cap(\Vec{{\text{pr}}^*})^{-1}(x).$$
\end{defn} 
By way of unraveling the last definition, we say that a $G$-structure is just 
a subbundle of the coframe bundle (which is emphasized by the use of the 
parenthetical subscript) with an especially nice action of 
$G \subset GL(n, \mathbb{R})$.\\ 
\indent
Cartan's equivalence problem can now be recast in terms of $G$-structures. 
Namely, two $G$-structures $_{1}F^{*}_{(0)}$ and $_{2}F^{*}_{(0)}$ on 
$M_1$ and $M_2$ respectively are said to be equivalent if there is a 
diffeomorphism $\AA : M_1 \rightarrow M_2$, so that 
it induces a bundle isomorphism ${\AA}^* :{_{1}F^{*}_{(0)}}\rightarrow {_{2}F^{*}_{(0)}}$.\\
\indent
In a standard trivialization $U \times G$, the Maurer-Cartan structure equations take the 
form:
\begin{equation}\label{E: Cartan1}
d \psi = [dO] \wedge {\psi}_U + [O]d{\psi}_U =[dO][O]^{-1} \wedge [O]{\psi}_U +[O]d{\psi}_U.
\end{equation}
\noindent
Here $\psi =[O]{\psi}_U$, and $[dO][O]^{-1}$ is, of course, the Maurer-Cartan matrix of
right invariant forms on $G$.\\
\indent
Recalling that ${\psi}_U$ are basic, that is, both coefficients and differentials can
be expressed in terms of coordinates on $U$, we can rewrite the structure equations in
the group-fiber representation:
\begin{equation}\label{E: Cartan2}
d{\psi}^i = \sum a^i_{jl} o^l \wedge {\psi}^j + \frac{1}{2}\sum {\gamma}^i_{jm}(x, [O])
{\psi}^j \wedge {\psi}^m.
\end{equation}
\noindent
The coordinates work out so that $([dO][O]^{-1})^i_j = \sum a^i_{jl} o^l$, where
$o^l$ is the basis of Maurer-Cartan forms, and $a^i_{jl}$ are constants. On the
contrary, the coefficients of quadratic terms depend on the base coordinates, as well
as on the group coordinates. The second sum on the right is called \textit{torsion}.\\
\indent
$[dO][O]^{-1}$ is not preserved by every 
equivalence between $G$-structures. In order to measure its deviation from 
equivariance, we have to analyze~\eqref{E: Cartan1}. \\ ${\gamma}^i_{jm}(x, [O])
\in \Gamma(TM \otimes {\Lambda}^2 T^*M)$ changes in step with $[dO][O]^{-1}$.
Gauging the connection form  $[dO][O]^{-1} \mapsto [dO'][O']^{-1}$ leads to the 
replacement $\gamma \mapsto {\gamma}'$ 
such that ${\gamma}'-\gamma =\delta ([M] \otimes \psi)$, and $\delta$ is the natural 
linear map $$\mathfrak{g} \otimes V^* \overset{\delta}{\longrightarrow} 
V\otimes {\Lambda}^2 V^*$$  defined as the composition 
\begin{equation*}\begin{CD}
\mathfrak{g} \otimes V^* @>{\text{Inclusion}}\otimes{\text{Id}}>>
(V \otimes V^*)\otimes V^* @>{\text{Skewsymmetrization}}>> V\otimes 
{\Lambda}^2 V^*.
\end{CD}\end{equation*}
\noindent
The cokernel of $\delta$ is extremely important. It has a special name -
the \textit{intrinsic torsion} of $\mathfrak g$ and is customarily denoted by
$H^{0,\;2}(\mathfrak g)$ as a reminder that the cokernel happens to be a
Spencer cohomology space. However, $\delta$ (whence its cokernel) depends 
on $\mathfrak g$ as well as the embedding of $\mathfrak g$ into 
$\mathfrak{gl}(n, \mathbb R)$. Also,
$\ker \delta = \mathfrak{s} \subset \mathfrak{g} \subset \mathfrak{gl}(n, \mathbb{R})$ 
consists of all symmetric matrices in $\mathfrak{g}$, and we have the following
exact sequence:
\begin{equation}\label{E: torsion exact sequence}
0 \longrightarrow \mathfrak{s} \longrightarrow \mathfrak{g} \otimes V^* 
\overset{\delta}{\longrightarrow} V \otimes {\Lambda}^2 V^* \longrightarrow H^{0,\;2}
(\mathfrak g) \longrightarrow 0.
\end{equation}
\indent
Now we need to build the group action into the analysis. We view $G \subset 
GL(n, \mathbb{R})$ as the identity inclusion. Now we generalize this viewpoint
to fit representation theory and let
$$ \rho: G \longrightarrow \text{Aut}(V), \;\; V \cong {\mathbb{R}}^n $$
\noindent
be the representation defining $G \subset GL(n, \mathbb{R})$. This induces
$$ {\rho}_*: T_eG \longrightarrow \text{Hom}(V,V)$$
\noindent
and results in
$$ {\rho}_*: \mathfrak{g} \longrightarrow \text{Hom}(V,V).$$
\noindent
Associated to $\rho$ is the dual representation
$$ {\rho}^{\dagger}: G \longrightarrow \text{Aut}(V^*)$$
\noindent
given explicitly by ${\rho}^{\dagger}([O]) =(\rho([O])^T)^{-1}.$
The natural $G$-action on $V\otimes {\Lambda}^2V^*$ is constructed
out of the individual representations:
$$\rho \otimes {\Lambda}^2 {\rho}^{\dagger}: 
G \longrightarrow \text{Aut}(V\otimes {\Lambda}^2V^*).$$
\indent
There is also a natural $G$-action on $\mathfrak{g}\otimes V^*$, given by
$$ \text{Ad}\otimes{\rho}^{\dagger}: G \longrightarrow 
\text{Aut}(\mathfrak{g}\otimes V^*).$$
\noindent
All these representations tie up beautifully with the map $\delta$:
$$\rho \otimes {\Lambda}^2 {\rho}^{\dagger}([O])\circ \delta(\cdot) =
\delta({\rho}_* \circ \text{Ad} \otimes {\rho}^{\dagger}([O])(\cdot)),$$
\noindent
which shows that the map $\delta$ is actually a mapping of $G$-modules,
and~\eqref{E: torsion exact sequence} is an exact sequence of $G$-modules.\\
\indent
Since the quotient map onto $H^{0,\;2}(\mathfrak g)$ is a $G$-module morphism
as well, we see that the composition
$${\tau}_U(x, [O]):\;U\times G \longrightarrow H^{0,\;2}(\mathfrak g)$$
acts by matrix multiplication
$${\tau}_U(x,[O'][O]) = \rho \otimes {\Lambda}^2 {\rho}^{\dagger}([O'])
{\tau}_U(x, [O])$$
\noindent
and deduce that the image of a fiber of $U \times G$ over $U$ is an orbit
of action of $G$ on $H^{0,\;2}(\mathfrak g)$.
\begin{defn}
An equivalence problem is of \textit{first-order constant type} if the 
image of ${\tau}_U(x, [O])$ is a single orbit on $H^{0,\;2}(\mathfrak g)$.
\end{defn}
\noindent
The method of equivalence guarantees that such problems admit a solution,
whereas multiple orbits typically take some ingenious ad hoc steps, and
may not be solvable at all. We confine our inquiry to special cases of 
first-order constant type. Now we choose a fixed vector, usually a
particular normal form in the image of the structure map, say
$${\tau}_0 = {\tau}_U(x_0, [O]_0) \in {\tau}_U(U \times G),$$
then there would be an isotropy group of that vector:
$$G_{{\tau}_0} = \{[O] \in G|\rho \otimes {\Lambda}^2 {\rho}^{\dagger}([O])
={\tau}_0\}.$$
Since ${\tau}_U(U \times G)$ is an orbit, and there is only one orbit,
it follows that ${\tau}_U(U \times G)$ is the orbit of ${\tau}_0$. By
transitivity, it can be identified with the homogeneous space $G/G_{{\tau}_0}$.
The structure map $${\tau}_U: U \times G \longrightarrow G/G_{{\tau}_0}$$
has constant rank, hence ${\tau}_U^{-1}({\tau}_0)$ is a manifold. The point
$ {\tau}_U(x, e)$ is on the orbit, hence for each $x$ there is an 
$[O](x) \in G$ such that 
$$\rho \otimes {\Lambda}^2 {\rho}^{\dagger}([O](x)){\tau}_U(x, e) = {\tau}_0.$$
Therefore 
$${\tau}_U^{-1}({\tau}_0)= \{x, [O](x)G_{{\tau}_0}|x \in U\}$$
is a manifold which submerses onto $U$. A ${\tau}_0$-\textit{modified coframe}
is a section of ${\tau}_U^{-1}({\tau}_0)$.\\
\indent
Should there exist an equivalence ${\AA}^*$ with ${\AA}^*(x_0, [O]_0) =
(y_0, [O']_0)$, it will result in
$${\tau}_0 = {\tau}_U(x_0, [O]_0)={\tau}_V \circ {\AA}^*(x_0, [O]_0) =
{\tau}_V(y_0, [O']_0).$$
With all this preparation, we can state
\begin{backthm}[Reduction of the structure group,~\cite{Gard}, Lecture 4]
A mapping
$$ \AA: U \longrightarrow V$$ induces a $G$-equivalence if and only if
${\AA}^*$ induces a $G_{{\tau}_0}$-equivalence between ${\tau}_0$-modified coframes.
\end{backthm}
\indent
The question becomes how to choose a convenient ${\tau}_0$. With an eye towards
equivalences with particularly amenable structure groups, typically direct products
of orthogonal ones, we further narrow down the class of problems treated. Namely,
we only deal with coframes that allow $G$-equivariant splitting of the vector space 
$V \otimes {\Lambda}^2 V^*$.\\
\indent
To find a normal form for the orbit ${\tau}_U(U \times G)$ in 
$H^{0,\;2}(\mathfrak g)$, the easiest is to work at the Lie algebra level. The group
action can be uncovered by computing the relations
$$ d^2 {\psi}^i =0 \mod \psi,$$
\noindent
and using linear combinations to solve for the torsion terms of~\eqref{E: Cartan2}.
The action splits into rotations, rescalings, and translations.
\begin{align}\label{E: torsion group action}
d{\gamma}^i_{jm}= \;&{\gamma}^i_{jm}\sum a^m_{ml}o^l -
{\gamma}^i_{jm}\sum a^j_{jl}o^l \\\notag
+\;&\sum a^u_{il}o^l{\gamma}^i_{um}-\sum a^v_{il}o^l{\gamma}^i_{jv}\\ \notag
+\;& \sum a^v_{(\lozenge \overset{d}{\rightarrow}jm)l} 
o^l{\gamma}^{(\lozenge \overset{d}{\rightarrow}jm)}_{jm} \;\;\mod \psi,\notag
\end{align}
\noindent
where $d{\psi}^{\lozenge} = {\gamma}^{(\lozenge \overset{d}{\rightarrow}jm)}_{jm}
{\psi}^j \wedge {\psi}^m + \cdots$, and $\lozenge \neq i$.\\
\indent
In his theory of R\'{e}p\`{e}re Mobile, Cartan called such a normalization
\textit{the first-order normalization}. Despite the somewhat misleading 
designation, this procedure can be repeated with progressively smaller groups,
yielding higher-order differential invariants.\\
\indent 
We keep track of the iterations by indexing the resulting $G$-structures:
$$F^*_{(0)}(G,\;U)\rightarrow F^*_{(1)}(G_1,\;U) \rightarrow F^*_{(2)}(G_2,\;U)
\cdots \rightarrow F^*_{(m)}(G_m,\;U).$$
\indent
As it turns out, preframes can be lifted to an equivalence problem of
a first-order constant type. Across the board, we would be able to 
normalize and reduce the structure 
group down, hence to select a unique coframe (on $T^*TU$):
$$ F^*_{(0)}(G,\;T_UU) \rightarrow F^*_{(1)}(G_1,\;T_UU) \cdots
\rightarrow F^*_{(m)}(\{e\},\;T_UU).$$ We use the idiosyncratic symbol
$T_UU$ to single out an open subset of $TU$ with compact closure.\\
\indent
After a number of iterations, the first-order normalization will
produce some $F^*_{(m)}(G_m,\;U)$ with the trivial action of $G_m$ on
$H^{0,\;2}(\mathfrak g)$. If $G_m=\{e\}$, then 
${\psi} =({\psi}^1,...,{\psi}^n)^T$ no longer involve group
parameters and define an invariant coframe on $U$. If $G$ has been
reduced to a group with $\mathfrak{s}=0$, then  
$({\psi}^1,...,{\psi}^n, o^1,...,o^l)$ defines an invariant coframe
on $U\times G$.\\
\indent
In either case, we have an equivalence problem, one on $U$, the other
one on $U\times G$ with an invariant coframe, and hence equivalence
problems with $G=\{e\}$. Such problems form a special class, and
the underlying coframes encode the invariants of \textit{e-structures}
(spaces with a specified coframe to put it simply).
This has a complete solution (\cite{Gard}, Lecture 6).\\
\indent
Assume we are given two coframings $(U,\psi)$ and $(V,\Psi)$, and want
to find necessary and sufficient conditions that
$${\AA}^* \Psi =\psi.$$
The post-reduction structure equations are
$$d{\psi}^i  =\sum {\gamma}^i_{jm}{\psi}^j\wedge{\psi}^m \;\;\text{and}\;\;
d{\Psi}^i  =\sum {\Gamma}^i_{jm}{\Psi}^j\wedge{\Psi}^m.$$
Here ${\Gamma}^i_{jm}$'s are general torsion coefficients, not to be 
confused with the Christoffel symbols, although those two may coincide.\\
\indent
If there were an equivalence, then $\mathfrak{s}=0$ implies
$${\gamma}^i_{jm}= {\Gamma}^i_{jm}\circ \AA.$$
Given a coframe ${\psi} =({\psi}^1,...,{\psi}^n)^T,$ there are natural
`covariant derivatives' for a function $f: U \longrightarrow \mathbb{R},$
defined by $$df= \sum f_{|i}{\psi}^i.$$
Similarly, given ${\Psi} =({\Psi}^1,...,{\Psi}^n)^T,$ and a function
$h:V \longrightarrow \mathbb{R},$ we define $$dh= \sum h_{;i}{\Psi}^i.$$
Therefore,
\begin{alignat}{2}
\sum {\gamma}^i_{jm|l} &= d {\gamma}^i_{jm} =d({\Gamma}^i_{jm}\circ \AA)=
{\AA}^*(d{\Gamma}^i_{jm})\notag\\ 
&= {\AA}^*(\sum {\Gamma}^i_{jm;l}{\Psi}^l)=\sum {\Gamma}^i_{jm;l}\circ \AA
{\AA}^*{\Psi}^l \notag\\
&= \sum {\Gamma}^i_{jm;l}\circ \AA {\psi}^l,\notag
\end{alignat}
\noindent
and as a result, $${\gamma}^i_{jm|l} = {\Gamma}^i_{jm;l}\circ \AA.$$
This argument is inductive for higher covariant derivatives.\\
\indent
Now define
$${\digamma}_{s}(\psi)\overset{\textnormal{def}}{=}
\{{\gamma}^i_{jm},{\gamma}^i_{jm|l_1},\cdots ,{\gamma}^i_{jm|l_1, \cdots,|l_{s-1}};   
1 \leqslant i, j, m, l_1,\cdots, l_{s-1} \leqslant n\},$$
which we view as a lexicographically ordered set. This is a set of invariants
of the $s$-jet of the $e$-structure $\psi$.\\
\indent
Two natural invariants of this set may be singled out. Let
$$ r_s = \text{rank} \{d{\digamma}_{s}(\psi)\},$$
where we count the dimension of the closed linear span of the differentials 
that occur in the ordered set $ {\digamma}_{s}(\psi)$.\\
\indent
Thus $r_s$ is an integer-valued function on $U$. The \textit{order} of the
$e$-structure at $x \in U$ is the smallest $j$ such that
$$ r_j(x) =r_{j+1}(x).$$
If $j$ is the order of the $e$-structure at $x$, then the \textit{rank}  of
the $e$-structure at $x \in U$ is $r_j(x)$. Note that $0 \leqslant j \leqslant n$,
the lower bound occuring when the structure tensor has constant coefficients
(i. e. on a Lie group), and the upper bound occuring if and only if one invariant
function is adjoined at each jet level.\\
\indent
An $e$-structure is called \textit{regular of rank} $r_j$ \textit{at} $x$ if 
the rank is constant in an open neighborhood of $x$. In this case there exist
functions $\{f_1,\cdots, f_{r_j}\}$ defined in an open neighborhood of $x$ such 
that $$f_1,\cdots, f_{r_j} \in {\digamma}_{j}(\psi), \;\;\;\;
df_1 \wedge \cdots \wedge df_{r_j} \neq 0,$$ while any function 
$u \in {\digamma}_{j}(\psi)$ satisfies
$$ du \wedge df_1 \wedge \cdots \wedge df_{r_j} = 0.$$
These $f_i$'s can be extended to a local coordinate system.\\
\indent
Now we can cite the fundamental theorem of Cartan, disposing of the 
restricted equivalence problem.
\begin{backthm}[Equivalence of $e$-structures,~\cite{Gard}, Lecture 6]
\label{T: e-structure equivalence}
Let $\psi$ and $\Psi$ be regular $e$-structures of the same rank $r_j$ and order $j$. Let
$$ h_U: U\rightarrow {\mathbb{R}}^m,\;\text{and}\;\;h_V:V \rightarrow {\mathbb{R}}^m$$
be extensions of an independent set of elements of ${\digamma}_j(\psi)$ and
${\digamma}_j(\Psi)$ to coordinate systems constructed from identical 
lexicographic  choices of indices. Define
$$ \sigma = h^{-1}_V \circ h_U: U \longrightarrow V,$$
then necessary and sufficient conditions that there exist
$$ \AA: U \longrightarrow V\;\;\text{with}\;\; {\AA}^* \Psi =\psi$$
are that
$$ {\digamma}_{j+1}(\Psi)\circ \sigma = {\digamma}_{j+1}(\psi)$$
as lexicographically ordered sets.
\end{backthm}

\subsubsection{Preternatural rotations}
\paragraph*{}
In the two sections immediately preceding this one, we described
(in broad terms) a version of integrability of subbundles of $TM \oplus T^*M$,
elucidated via Dirac structures, and intrinsic torsion of coframes. Now we 
introduce a principally new concept- designed nonintegrability of preframes. 
In various applications of Cartan's equivalence method one comes across a
partially reduced problem, and selects the values of intrinsic torsion
coefficients based on external or aesthetic considerations. We just want 
a linearized version of nonintegrability afforded by the Courant bracket
encoding certain metric structures on the base manifold lifted up to $TT^*M$. 
The discrete bundle automorphism $({\omega}^{\#} - {\pi}^{\#})$ has its
counterpart upstairs. It reshuffles symplectic-conjugate variables. We 
identify $2k$ smooth sections of $TT^*M$ satisfying the following
conditions:
\begin{equation}
{\mathcal{L}}_{{\mathcal{K}}_l} \Omega = 0, \;\;\; l \leqslant 2k,
\end{equation}
\begin{equation}
[{\mathcal{K}}_l, {\mathcal{K}}_m] = 0, \;\;\;l, m \leqslant 2k,
\end{equation}
\noindent
such that if we have a preframe with elements $X_i +{\xi}_i,\;\;
X_j + {\xi}_j$, their bracket being
$$[X_i + {\xi}_i, X_j + {\xi}_j] = \sum_{l=1}^{2k}h_l 
(X_l +{\xi}_l) + \sum_{l=1}^{2k}f_l 
({\omega}^{\#} - {\pi}^{\#})(X_l +{\xi}_l), \;\;\;\sum f^2_l >0,$$
\noindent
lifting this bracket onto  $TT^*M$ yields
$$\ccc([(X_i + {\xi}_i), (X_j + {\xi}_j)]) = \sum_{l=1}^{2k}
\ccc(h_l) \ccc(X_l +{\xi}_l) + \sum_{l=1}^{2k}
\ccc(f_l)[\mathcal{K}, \ccc(X_l +{\xi}_l)], $$
\noindent
where $\mathcal{K} = \sum {\mathcal{K}}_m$. It is easy to see, that,
owing to the symplectic connection, locally we
can write ${\mathcal{K}}_l = {\dot{x}}_l \widehat{\frac{\partial}
{\partial x_l}} - x_l {\frac{\partial}{\partial {\dot{x}}_l}}.$
Those corresponding precisely to $({\omega}^{\#} - {\pi}^{\#})$ are
discrete, constant rotations of this kind with a single parameter
set at $ \frac{\pi}{2}$.\\
\indent
We call the rotations ${\mathcal{K}}_m$ `preternatural' as they have
no precursors among the continuous symmetries associated with the Courant 
algebroid. The closest one gets to a precursor is, the closed 2-forms 
acting on the sections as Nigel Hitchin prescribed~\cite{Hitch}. 
Once lifted, they  become the `natural' pseudorotations
${\mathcal{O}}_l = {\dot{x}}_l \widehat{\frac{\partial}
{\partial x_l}} + x_l {\frac{\partial}{\partial {\dot{x}}_l}}.$\\
\indent
In terms of torsion, the coframes gotten from the 
nonintegrable preframes acquire additional components. 

\subsection{Regular preframes}
\paragraph*{}
As the title suggests, here we enjoy the luxuries of a symplectic manifold.
The equivalence method can be applied, hence smooth local coframes abound. 
The preframes are naturally derived from those coframes. We find it 
convenient to use the standard Poisson structure to effect that derivation.
Thus given a fixed coframe $\{({\psi}_U)_i\}$ we state
\begin{defn}
A \textit{regular preframe} on an open set $U \subset (M, \omega)$, 
endowed with a smooth coframe 
$\{({\psi}_U)_i\}$ is a 2k-tuple $\{({\psi}_U)_i + i_{{\pi}_U}({\psi}_U)_i\}$.
\end{defn} 
\noindent
We can easily check that a regular preframe is a preframe according to 
Definition~\ref{D: preframe}. Indeed we have
\begin{equation}
\text{span}(\{({\psi}_U)_i + i_{{\pi}_U}({\psi}_U)_i\},
({\omega}^{\#}_U - {\pi}^{\#}_U)\{({\psi}_U)_i + i_{{\pi}_U}({\psi}_U)_i\})
= TU \oplus T^*U, \;\;\text{and} 
\end{equation}
\begin{equation}
\{({\psi}_U)_i + i_{{\pi}_U}({\psi}_U)_i\} \cap 
({\omega}^{\#}_U - {\pi}^{\#}_U)\{({\psi}_U)_i + i_{{\pi}_U}({\psi}_U)_i\}=0.
\end{equation}
\noindent
Right away, limitations show up: the structure group of the equivalence
problem being solved ought to preserve the symplectic structure. That
shouldn't come as a surprise, as our regular preframes are just $2k$-tuples
of linearly independent sections of the symplectic graph Dirac subbundle.
Conversely, any preframe closed under the Courant bracket is regular.
Also, $\{({\psi}_U)_i\}$ and $\{({\psi}_U)_i + i_{{\pi}_U}({\psi}_U)_i\}$ 
are equivalent under the action of the Hitchin spinor group $Spin(TU \oplus
T^*U)$.\\
\indent
Among regular preframes, we single out a proper subset, distinguished by
its simplicity:
\begin{defn}
A \textit{Hamiltonian  preframe} on an open set $U \subset (M, \omega)$, 
is a regular preframe obtained from a smooth coframe of exact 1-forms, 
$\{(df_i + i_{{\pi}_U}df_i\}$.
\end{defn}
\noindent
Due to the fact that ${\pi}^{\#}_U = ({\omega}^{\#}_U)^{-1}$ as bundle
isomorphisms, the vector fields $i_{{\pi}_U}df_i $ are Hamiltonian and
linearly independent in $U$. One predictable property of global 
Hamiltonian preframes is expressed via 
\begin{thm}
A diffeomorphism $F: (M,\omega) \longrightarrow (M,\omega)$ is symplectic
if and only if it the induced bundle map $dF \oplus dF^*: TM \oplus T^*M
\longrightarrow TM \oplus T^*M$ preserves the set of Hamiltonian preframes.
\end{thm}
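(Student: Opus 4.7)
The plan is to rewrite a Hamiltonian preframe $\{df_i + i_{\pi_U} df_i\}$ in the equivalent form $\{X_{f_i} + df_i\}$, where $X_{f_i} := i_{\pi_U} df_i$ is the Hamiltonian vector field of $f_i$. Since $\pi_U^{\#} = (\omega_U^{\#})^{-1}$, one has $i_{X_{f_i}} \omega = df_i$, so every Hamiltonian preframe is a $2k$-tuple of sections of the graph Dirac structure $L_{\omega} = \{X + i_X \omega \mid X \in \Gamma(TM)\}$, distinguished by the fact that the vector-field components are Hamiltonian with respect to potentials $\{f_i\}$ whose differentials form a local coframe. With this reformulation, both implications reduce to classical facts about symplectomorphisms.

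For the forward direction, if $F^{*}\omega = \omega$, then $F$ is a Poisson map and $F_* X_f = X_{f \circ F^{-1}}$ for every smooth $f$. Applying the bundle map $dF \oplus dF^{*}$ to the section $X_{f_i} + df_i$ therefore produces
$$F_* X_{f_i} + (F^{-1})^{*} df_i \;=\; X_{f_i \circ F^{-1}} + d(f_i \circ F^{-1}).$$
Since $F$ is a diffeomorphism, $\{d(f_i \circ F^{-1})\}$ remains a smooth coframe, and the image is precisely the Hamiltonian preframe associated with the transported coordinate system $\{f_i \circ F^{-1}\}$.

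For the reverse direction, assume $dF \oplus dF^{*}$ preserves the set of Hamiltonian preframes. Fix $x_0 \in M$ and any smooth $f$ with $df(x_0) \neq 0$, and extend $f = f_1$ to a local system $\{f_1, \dots, f_{2k}\}$ whose differentials form a coframe near $x_0$ (available by the implicit function theorem). By hypothesis the image must take the form $\{X_{g_i} + dg_i\}$ for some smooth functions $g_i$ whose differentials form a coframe. Equating $T^{*}M$-components yields $dg_i = (F^{-1})^{*} df_i = d(f_i \circ F^{-1})$, so $g_i = f_i \circ F^{-1}$ up to locally constant summands; equating $TM$-components then gives $F_* X_{f_i} = X_{f_i \circ F^{-1}}$. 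As $f = f_1$ and $x_0$ were arbitrary subject only to $df(x_0) \neq 0$, $F$ intertwines every Hamiltonian vector field, which is equivalent to $F^{*}\omega = \omega$.

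The main technical point lies in the reverse direction: one must verify that an arbitrary smooth $f$ with $df(x_0) \neq 0$ can be embedded into a Hamiltonian preframe defined in some neighborhood of $x_0$, so that the hypothesis can actually be applied to it. This is routine, and once achieved the intertwining of Hamiltonian flows follows immediately from the component-matching above. Since functions with nonvanishing differential are dense in $C^{\infty}(M)$, the intertwining propagates by continuity to all of $C^{\infty}(M)$ and forces $F^{*}\omega = \omega$ pointwise, concluding the argument.
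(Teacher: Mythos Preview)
Your proof is correct and takes a genuinely different route from the paper's. The paper argues globally: it observes that the mere existence of a \emph{global} Hamiltonian preframe forces $(M,\omega)$ to be simply connected and parallelizable, whence every symplectic diffeomorphism is automatically Hamiltonian (giving $\Rightarrow$), and conversely that a map preserving Hamiltonian preframes must act block-diagonally in a fixed trivialization of $TM\oplus T^*M$, hence be Hamiltonian and a fortiori symplectic. Your argument is instead a direct local computation: you match the $T^*M$- and $TM$-components of the image of $X_{f_i}+df_i$ under $dF\oplus dF^*$ and read off $F_*X_f = X_{f\circ F^{-1}}$, which is the classical characterization of symplectomorphisms. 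Your approach is more elementary, requires no global topological hypotheses on $M$, and works uniformly whether the Hamiltonian preframes in question are local or global; the paper's approach is terser but leans on the structural consequences of global parallelizability. One minor remark: your density/continuity step at the end is unnecessary, since at any point where $df(x_0)=0$ both $F_*X_f$ and $X_{f\circ F^{-1}}$ vanish there automatically, so the intertwining identity holds pointwise without any limiting argument.
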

\begin{proof}  
The condition that the set of global Hamiltonian preframes be nonempty 
is topologically extremely restrictive. In particular, $(M,\omega)$ would 
have to be simply connected and parallelizable. Hence any symplectic 
action is Hamiltonian, and $\Longrightarrow$ follows. As for 
$\Longleftarrow$, the action that preserves Hamiltonian 
preframes is Hamiltonian since it splits into block-diagonal components
in a fixed trivialization of $TM \oplus T^*M$, and is symplectic a fortiori.
\end{proof}
\indent
The regular metric case entails some positive functions of the diagonalized
metric with the line element $ds^2 = \sum g^{ii} dx_i^2$. Abstractly,
\begin{equation}\label{E: regular preframe}
\begin{cases}
 f(g^{11})({\cos}\frac{\pi}{4}dx_1 + {\sin}\frac{\pi}{4}
 \frac{\partial}{\partial x_{1+k}});\\
...................................................\\
f(g^{kk})({\cos}\frac{\pi}{4}dx_k + {\sin}\frac{\pi}{4}
\frac{\partial}{\partial x_{2k}});\\
f(g^{k+1 k+1})({\cos}\frac{\pi}{4}dx_{k+1} - {\sin}\frac{\pi}{4}
\frac{\partial}{\partial x_{1}});\\
...................................................\\
f(g^{2k2k})({\cos}\frac{\pi}{4}dx_{2k} - {\sin}\frac{\pi}{4}
\frac{\partial}{\partial x_{k}}).\\
\end{cases}
\end{equation} 
\noindent
And the complementary preframe is gotten by applying $({\omega}^{\#}_U - 
{\pi}^{\#}_U)$ to the above preframe. Here $f$ is a general designation
for some as yet unknown standard normal form. This is just a choice of
$2k$ linearly independent sections of the symplectic graph Dirac 
subbundle described earlier. Integrability follows once we see that 
the same coefficient appears for the forms and fields.

\subsection{Singular preframes}
\paragraph{}
We plan to represent the metric tensor coefficients with some
suitable expressions that capture the essence of their pathology.
One has to bear in mind that rank deficiencies are to be represented
without compromizing linear independence. For that reason we need the
trigonometric functions.

\begin{equation}\label{E: singular preframe}
\begin{cases}
\sum_j e^{w_{1j}}({\cos}({\kappa}_{1j}(x_1,...))dx_j + 
{\sin}({\kappa}_{1j}(x_1,...))\frac{\partial}{\partial x_{k+j}});\\
.........................................................................\\
\sum_j e^{w_{kj}}({\cos}({\kappa}_{kj}(x_1,...))dx_j + 
{\sin}({\kappa}_k(x_1,...))\frac{\partial}{\partial x_{k+j}});\\
\sum_j e^{w_{k+1j}}({\cos}({\kappa}_{k+1j}(x_1,...))dx_{k +j} - 
{\sin}({\kappa}_{k+1j}(x_1,...))\frac{\partial}{\partial x_j});\\
.........................................................................\\
\sum_j e^{w_{2kj}}({\cos}({\kappa}_{2kj}(x_1,...))dx_{j} - 
{\sin}({\kappa}_{2kj}(x_1,...))\frac{\partial}{\partial x_j}).\\
\end{cases}
\end{equation} 
\noindent
The linear independence entails $1 \leqslant j \leqslant k$. The 
complementary preframe is gotten the same way as with the regular case.\\

\section{Regularization}

\indent
Our approach to nondifferentiable metrics is based on a
decomposition of sterilized versions of the coefficients
into products of standard nonvanishing functions and
trigonometric functions. After smoothing, and overcoming
technical difficulties thereof, the latter will be used to 
parameterize preternatural rotations.\\

\subsection{Encoding the metric}
\paragraph{}
From this section on, we denote by $M^N \subset M$, respectively
$M^O \subset M$ the global locus of nondifferentiability, respectively
the global locus of rank deficiency. Those are assumed to be finite
collections of smooth submanifolds.\\
\indent
The (pseudo-)metric is a symmetric tensor, locally written as
$$ g_U = \sum_{ij} g^{ij} dx_i \otimes dx_j,$$
\noindent
where $U \subset M$ is open and sufficiently small to accommodate
our construction. For a fixed $U$, we have $U^N = U \cap M^N$,
$U^O = U \cap M^O$, the total loci of nondifferentiability and
rank deficiency.
At least some $g^{ij}$ are nondifferentiable. Formally, $\exists
i_0,\;j_0$ such that $g^{i_0j_0} \notin C^1(M).$ We denote their
loci of nondifferentiability by $U^N_{ij} \subset U^N.$ We further
restrict the class of such coefficients treated below. 
Our requirements for the metric are:
\begin{enumerate}
\item
The attendant Christoffel symbols and Riemann tensor coefficients
in the local coordinates on $U$ are either bounded away from zero or 
identically zero for some special triples and quadruples of indices:
\begin{equation}\label{E: Christoffel}
\Big|{\Gamma}^m_{ij}\Big|\;\Big|_{U} > C_U,\;\;
{\Gamma}^{m_0}_{{i_0}{j_0}}\Big|_{U} \equiv 0,\;\;
\Big|R^m_{ijs}\Big|\;\Big|_{U} > C_U,\;\;
R^{m_0}_{{i_0}{j_0}{s_0}}\Big|_{U} \equiv 0.
\end{equation} 
\noindent
where $C_U$ is a uniform constant depending only on the size of $U$.
\item
There ought to exist a single parameter,
serving as a coordinate on the base manifold, that designates all
loci of nondifferentiability of the metric. Specifically, we set
$x_1$ as the blow-up parameter. Hence, each $U^N_{ij}$ consists
of a finite number of smooth hypersurfaces transversal
to the $x_1$-direction in $U$. A locus of higher codimention would need 
some additional coordinates to be specified apart from a value of $x_1$.
\item
Furthermore, the metric coefficients are allowed to vanish, their loci
of rank deficiency, being a subset of $M$ where $\det g =0$, are smooth 
hypersurfaces\footnote{Vanishing loci depending on other coordinates
are acceptable, especially those associated with noncartesian coordinate
systems. They just have to have sufficiently high codimension.} collectively 
comprising its locus of rank deficiency $U^O_{ij}$, 
every one transversal to the $x_1$-direction as well. 
\item
Our fourth requirement is, the metric coefficients be explicitly given by
$$g^{ij}(x_1,...) = \frac{(g^{ij})^+}{(g^{ij})^-},\;\;\;
(g^{ij})^+,\;(g^{ij})^-  \in C^0(M).$$
However, away from their respective locus of nondifferentiability/rank
deficiency we need
$(g^{ij})^+ \in C^2(M\backslash U^O_{ij}), (g^{ij})^-\in 
C^2(M\backslash U^N_{ij}).$ 
\item Finally, we also require\\
$$\textnormal{rank}[|g^{ij}|]\bigr|_{M\backslash(U^N \cup U^O)} = \dim M,$$
\noindent
where $U^N = \bigcup_{i,j}U^N_{ij},\;\;U^O =\bigcup_{i,j}U^O_{ij}$.
The locus of rank deficiency of $g$ must consist of at most a finite
number of hypersurfaces.
\end{enumerate}
\indent
Our standard normal form on the tubular neighborhood 
$$((U^N \cup U^O) \times (-\epsilon, \epsilon)) \cap U$$  
potentially involving both loci with $\bar{U}$ compact is 
\begin{equation}\label{E: standard form}
(e^{w_{ij}}\cos({\kappa}_{ij}))^2\bigr|_{(U^N_{ij} \cup U^O_{ij})
\times (-\epsilon, \epsilon)}=\left|\frac{\sin ((g^{ij})^-)}
{(g^{ij})^-}\frac{\sin ((g^{ij})^+)}{(g^{ij})^+ }\right|.
\end{equation}
\noindent
The right-hand side of~\eqref{E: standard form} is a product of 
the well-known sinc functions. We adopt the conventional 
(in electrical engineering and digital signal processing) definition:
\begin{equation}\label{E: sinc 1}
\textnormal{sinc}(0) \overset{\textnormal{def}}{=} 
\lim_{x_1 \to x_1^{\textnormal{blow}}}\frac{1}{(g^{ij})^-}\sin ((g^{ij})^-)=1,
\end{equation}
\begin{equation}\label{E: sinc 2}
\textnormal{sinc}(0) \overset{\textnormal{def}}{=} 
\lim_{x_1 \to x_1^{\textnormal{nought}}}\frac{1}{(g^{ij})^+}\sin ((g^{ij})^+)=1.
\end{equation}
\noindent
With that definition in place, we delineate the relationship 
between the weights $w_{ij}(x_1,...,x_{2k})$'s, and the 
trigonometric functions, implicit in \eqref{E: standard form}. 
Wherever the metric coefficient blows up or vanishes, we set 
\begin{equation}
\cos({\kappa}_{ij}))\bigr|_{U^N_{ij}} = \pm 1,\;\;\;
\cos({\kappa}_{ij}))\bigr|_{U^O_{ij}} = \mp 1,\;\;\text{and}
\end{equation}
\begin{equation}
e^{2w_{ij}}\bigr|_{U^N_{ij}} = |(g^{ij})^+)^{-1}\sin ((g^{ij})^+|,\;\;\;
e^{2w_{ij}}\bigr|_{U^O_{ij}} = |(g^{ij})^-)^{-1}\sin ((g^{ij})^-|.
\end{equation}
\noindent
Outside of its loci of nondifferentiability and rank deficiency, but over
some other loci, $U^N_{uv} \cap (U^N_{ij} \cup U^O_{ij}) = \emptyset$,
$U^O_{uv} \cap (U^N_{ij} \cup U^O_{ij}) = \emptyset$, we maintain
\begin{equation}
\cos({\kappa}_{ij}))\bigr|_{U^N_{uv}} = \pm 1,\;\;\;
\cos({\kappa}_{ij}))\bigr|_{U^O_{uv}} = \mp 1.
\end{equation}
\noindent
The weights there break the pattern:
\begin{equation}
e^{2w_{ij}}\bigr|_{U^N_{uv}} = \left|\frac{\sin ((g^{ij})^-}{(g^{ij})^-}
\frac{\sin ((g^{ij})^+}{(g^{ij})^+}\right|,
\end{equation}
\begin{equation}
e^{2w_{ij}}\bigr|_{U^O_{uv}} = \left|\frac{\sin ((g^{ij})^-}{(g^{ij})^-}
\frac{\sin ((g^{ij})^+}{(g^{ij})^+}\right|.
\end{equation}
\indent
On $\partial U \backslash$loci of nondifferentiabily and rank deficiency, we require
\begin{equation}\label{E: boundary condition1}
\cos ({\kappa}_{ij}(x_1,...,x_{2k})) \bigr|_{\partial U \backslash (\partial U \cap 
(U^N \cup U^O) \times (-\epsilon, \epsilon))} =\frac{\sqrt{2}}{2};
\end{equation}
\begin{equation}\label{E: boundary condition2}
\dda {\kappa}_{ij}(x_1,...,x_{2k})) \bigr|_{\partial U \backslash (\partial U \cap 
(U^N \cup U^O) \times (-\epsilon, \epsilon))} = 0.
\end{equation}
\noindent
Informally speaking, away from $U^N \cup U^O$, all extant dependencies
are transferred onto the weights.\\
\indent
The tubular neighborhood of $U^N \cup U^O$ takes more work. 
The intricate blow-up behavior has to be captured consistently. 
To this end we use analytic properties of $g$. Specifically,
we introduce an auxiliary scalar field to represent each $g^{ij}$;
depending on the number of metric coefficients present on $U$, the
total may reach $2k^2 +k$.\\
\indent
To produce that scalar field, we utilize one second-order differential
equation closely allied with the metric. The Laplace-Beltrami operator
commutes with all Killing vector fields, and thus carries all metric
data. It is obtained by taking the (covariant) Hessian 
$$\textnormal{Hess}f \in \Gamma (T^*M \otimes T^*M),\;\;\;
\textnormal{Hess}f \overset{\textnormal{def}}{=} {\nabla}^2 f 
\equiv \nabla df,$$ 
\noindent
and contracting it with respect to the (pseudo-)metric. Thus let 
$\{X_i\}$ be a basis of $TM$ (not necessarily induced by a 
coordinate system). Then the components of
$\textnormal{Hess}f$ are $(\textnormal{Hess}f)_{ij} = 
\textnormal{Hess}f(X_i, X_j) = {\nabla}_{X_i} {\nabla}_{X_j}f - 
{\nabla}_{{\nabla}_{X_i}X_j}f$. In terms of the metric, 
the Laplace-Beltrami equation is
$$ {\Delta}_g f = \sum_{ij} g^{ij} (\textnormal{Hess}f)_{ij} =0.$$
\noindent
For the Minkowski pseudometric, the Laplace-Beltrami operator coincides
with the familiar d'Alembertian: $ {\Delta}_g f = {\square}_g f$. In local
coordinates, this becomes
$${\Delta}_g f = \sum_{ij} g^{ij}(\frac{{\partial}^2f}{\partial x_i 
\partial x_j}- {\Gamma}^m_{ij} \frac{\partial f}{\partial x_m})\;=0.$$
\noindent
Since $g^{ij},\;{\Gamma}^m_{ij}$ are nondifferentiable on $U^N$, we seek 
a solution on $U$ (subscripted $e^{w_{uv}(x_1,...,x_{2k})}_U
\sin({\kappa}_{uv}(x_1,...,x_{2k}))_U$), modified by the exponent 
$l_g(U)$, understood to be the second smallest positive integer fully 
damping the blowup of every summand of the Laplace-Beltrami operator: 
$$\lim_{x_1 \to x_1^{\textnormal{blow}}}\left|g^{ij}\frac{{\partial}^2}
{\partial x_i \partial x_j} (e^{w_{uv} (x_1,...,x_{2k})}_U
\sin({\kappa}_{uv}(x_1,\cdots, x_{2k}))_U)^{l_g -1}\right| \leqslant C,$$
$$\lim_{x_1 \to x_1^{\textnormal{blow}}}\left|g^{ij}{\Gamma}^m_{ij} 
\frac{\partial}{\partial x_m}(e^{w_{uv} (x_1,...,x_{2k})}_U
\sin({\kappa}_{uv}(x_1,\cdots, x_{2k}))_U)^{l_g -1}\right| \leqslant C,$$
\noindent
$\forall i,j,m,u,v$, and every value of $x_1^{\textnormal{blow}}$ 
uniformly in $U$. Therefore we take the value working in both cases,
specifically $l_g =10$, and the equation becomes
\begin{equation}\label{E: Laplace} 
{{\Delta}_g}(e^{w_{uv}(x_1,...,x_{2k})}_U\sin({\kappa}_{uv}
(x_1,...,x_{2k}))_U)^{10}=0,
\end{equation}
\noindent
subject to the boundary conditions~\eqref{E: boundary condition1},
~\eqref{E: boundary condition2}. The solutions of~\eqref{E: Laplace} 
are obtained by a limiting process involving series expansions of
offending $g^{ij},\;{\Gamma}^m_{ij}$, each pair diverging at its locus 
of nondifferentiability, as well as a power series for the exponent:
\begin{equation}
g^{ij} = \sum_{s=1}^{\infty} (a_{ij})_s,\;\;\;
{\Gamma}^m_{ij} =\sum_{s=1}^{\infty} (b_{ij}^m)_s,\;\;
10 = \sum_{s=0}^{\infty} l^s,\;\;\; l^s = (\frac{9}{10})^s.
\end{equation}
\noindent
Then~\eqref{E: Laplace} is approximated via partial sums:
\begin{multline}
\lim_{N \to \infty} \sum_{ij} \left(\sum_{s=1}^{N}(a_{ij})_s\right) 
\left(\frac{{\partial}^2}{\partial x_i \partial x_j}
- \sum_{s=1}^{N} (b_{ij}^m)_s \frac{\partial}{\partial x_m}\right)\\
\left(e^{w_{uv}(x_1,...,x_{2k})}_U\sin({\kappa}_{uv}
(x_1,...,x_{2k}))_U \right)^{1 + \sum_{s=1}^{N} l^s}\;=0.
\end{multline}
\indent
The limiting solutions are not unique, and generically are elements 
of $C^2(U)$. One bit of information captured this way is whether the 
trigonometric factor $\cos({\kappa}_{uv}(x_1,...,x_{2k}))_U$ of the 
solution in scrutiny is a Morse-Bott function at one (or more) of 
hypersurface components of $U^N_{uv}$ or $U^O_{uv}$.\\
\indent
Further treatment is obviously in order. Two objectives are as yet 
to be achieved: smoothing, and selecting appropriate scalar fields - 
both crucial for building an e-structure for application of 
Background Theorem~\ref{T: e-structure equivalence}. To this end
we introduce one selection criterion, a far-reaching implication
of $U^N_{uv} \cap U^O_{uv} = \emptyset$: a constraint on the partial
derivatives of the solution factors. Precisely, by our definitions,
$\frac{\partial {\kappa}_{uv}}{\partial x_{l_0}} \equiv 0$ implies
$\frac{\partial w_{uv}}{\partial x_{l_0}} \equiv 0$ and vice versa,
whenever there are manifold variables $x_{l_0}$ of which $g^{uv}$ is
independent. Now, as a consequence of~\eqref{E: Christoffel}, for
all nonvanishing Christoffel symbols, we have
\begin{equation}\label{E: solution rank}
\Big| \frac{\partial {\kappa}_{uv}}{\partial x_l}\Big| +
\Big|\frac{\partial w_{uv}}{\partial x_l}\Big| > C'_U,\;\;\; 
\forall l \neq l_0. 
\end{equation}
\noindent
Contrary to one's expectation, this does not exclude the solutions
with irregular $\textnormal{rank} \;\nabla 
(e^{w_{uv}(x_1,...,x_{2k})}_U\sin({\kappa}_{uv}(x_1,...,x_{2k}))_U).$

\subsection{Smoothing}
\paragraph{}
Presently, we construct a correspondence between our (nonunique)
solutions of the boundary value problem~\eqref{E: boundary condition1},
~\eqref{E: boundary condition2},~\eqref{E: Laplace} and elements of 
$C^{\infty}(U)$. The basic technique below is due to 
Nash~\cite{N}, modified by Gromov (\cite{Grom}, Section 2.3).\\
\indent
For a function $f: {\mathbb{R}}^{2k} \longrightarrow \mathbb{R}$, we
introduce its $C^0$-norm by $||f||_0 = {\sup}_{x \in {\mathbb{R}}^{2k}}
|f(x)|$, and for $0 <\alpha <1$ we set its H\"{o}lder $C^{\alpha}$-norm
to be
\begin{equation}
||f||_{\alpha} \overset{\textnormal{def}}{=} \max(||f||_0,\; 
\sup_{x, w}(|w|^{-\alpha}|f(x + w) -f(x)|)),
\end{equation}
\noindent
where $x$ is an arbitrary point, and $w$ runs over all nonzero vectors 
in the unit ball centered at the origin of ${\mathbb{R}}^{2k}$.
For an arbitrary $\alpha = j + \theta, \;j=0,1,\cdots,\; \theta \in 
[0,1)$, we put $||f||_{\alpha} \overset{\textnormal{def}}{=}
||J^j_f||_{\theta}$, $J^j_f$ being the jet of $f$ over $U$, and if 
$f \notin C^j(U)$, we set 
$||f||_{\alpha} = \infty,\;\; \forall \alpha \geqslant j$.\\
\indent
We fix a sequence of linear operators $S_i: C^0(U) \rightarrow 
C^{\infty}(U),\;\; i=0,1,\cdots$. 
\begin{defn}\label{D: locality}
The sequences that satisfy 
\begin{enumerate}
\item[]
Locality. Every $S_i$ does not enlarge supports of elements of the
domain space by more than ${\epsilon}_i = (2\epsilon +2)^{-1}$, that 
is, the value $(S_if)(x_1, \cdots, x_{2k})$ depends on that $f$ 
within the ball of radius ${\epsilon}_i$ for all $f$.\\
\item[]
Convergence. If $f \in C^{\alpha}({\mathbb{R}}^{2k}), \;\; \alpha =
0,1,\cdots$, then $S_if \rightarrow f$ as $i \rightarrow \infty$
in the usual (not fine) topology. Moreover, $C^{\alpha}$-convergence
$f_i \rightarrow f$ implies $C^{\alpha}$-convergence $S_if_i
\rightarrow f$.
\end{enumerate}
\noindent
are called \textit{local smoothing operators}.
\end{defn}
\begin{defn}\label{D: smoothing estimates}
A sequence of smoothing operators $S_0, S_1, \cdots$ has \textit{Nash
depth} $\bar{d_i}$, if for every compact $\bar{U} \subset 
{\mathbb{R}}^{2k}$, there are some constants $C_{\alpha},\;\alpha 
\in [0, \infty)$, uniformly bounded on every finite interval
$[0, \alpha] \subset [0, \infty)$, such that all functions satisfy
the following inequalities with the norms $||\cdot||_{\alpha} =
||\cdot||_{\alpha}(\bar{U})$ for all $\alpha \in [0, \infty)$, and
all $i \in {\mathbb{Z}}^+$ (smoothing estimates):
\begin{equation}
||S_{i-1}(f)||_{\alpha} \leq\; C_{\alpha}i^{2\beta}||f||_{\alpha -\beta}
\;\;\textnormal{for} \;0 \leq \beta \leq \alpha.
\end{equation}
\begin{equation}
||(S_i -S_{i-1})(f)||_{\alpha} \leq\; C_{\alpha}(i^{-2\bar{d}-1}
 + i^{-2\beta -1})||f||_{\alpha +\beta}
\;\;\textnormal{for} \;-\alpha \leq \beta < \infty.
\end{equation}
\begin{equation}
||S_{i-1}(f) - f||_{\alpha} \leq\; C_{\alpha}(i^{-2\bar{d}}
 + i^{-2\beta})||f||_{\alpha +\beta}
\;\;\textnormal{for} \; \beta \geq 0.
\end{equation}
\end{defn}
\indent
To construct `deep smoothing', we start with a $C^{\infty}$-function
$S: {\mathbb{R}}^{2k} \rightarrow \mathbb{R}$ supported in the unit 
ball centered at the origin, $B_0(1) \in {\mathbb{R}}^{2k}$ and for 
an arbitrary $f \in C^0({\mathbb{R}}^{2k})$ we consider the convolution
\begin{multline}
(S \ast f)(x_1,\cdots,x_{2k}) = \\ \int_{{\mathbb{R}}^{2k}}
S(y_1, \cdots,y_{2k})f(x_1 +y_1, \cdots, x_{2k} + y_{2k})dy_1 \cdots dy_{2k}.
\end{multline}
\noindent
Then we modify our original function, now set to be 
$$S_{\zeta}(x_1,\cdots,x_{2k})= {\zeta}^{2k}S(\zeta x_1, 
\cdots,\zeta x_{2k}),\;\;\; \forall \zeta \geqslant 1,$$ 
and observe that $S_{\zeta}$ is supported within a smaller ball, 
$B_0({\zeta}^{-1})$ to be precise, and $$||S_{\zeta} \ast f||_{\alpha} 
\leqslant \int_{{\mathbb{R}}^{2k}}|S_{\zeta}(y)||f||_{\alpha}dy.$$
\indent
At this point we normalize $S_{\zeta}$ via
$$\int_{{\mathbb{R}}^{2k}} S_{\zeta}(y) dy =1.$$
Such operators $f \mapsto S_{\zeta} \ast f$ converge, as $\zeta 
\rightarrow \infty$, in all $C^j$-topologies to the identity operator.\\
\indent
A normalized smoothing operator has Nash depth $\bar{d}$ whenever its
kernel function is orthogonal  to all homogeneous polynomials of degrees
$1,\cdots, \bar{d}$. One produces such kernels out of an arbitrary
normalized $S$ by taking linear combinations
$$S^{\text{deep}} = \sum_{m=0}^{\bar{d}} a_m S_{{\zeta}_m}.$$
\noindent
The normalization condition for $S^{\text{deep}}$ amounts to the 
identity $\sum_{m=0}^{\bar{d}} a_m =1$, and the depth condition is
encapsulated in the equation
$$\sum_{m=0}^{\bar{d}} a_m{\zeta}_m^{-j} =0,\;\;\; j = 1,\cdots,\bar{d}.$$
\noindent
All such operators satisfy the estimates of 
Definition~\ref{D: smoothing estimates}. Gromov (\cite{Grom}, section 2.3.4)
goes on to show those convolutions to be infinitely differentiable functions.\\
\indent
To adapt the above smoothing scheme to manifolds, we embed $U$ into
${\mathbb{R}}^{2k}$ isometrically, and extend local scalar fields and
(potential) kernels of the smoothing operators onto the ambient space
using partitions of unity. Owing to the locality condition of 
Definition~\ref{D: locality}, the resulting approximating fields would
depend entirely on the data originated in $U$.\\
\indent
Now we prove that such an essential property of the auxiliary scalar 
fields as living in the kernel of the Laplace-Beltrami operator is 
preserved after smoothing. 
\begin{lem}
For every solution of the boundary value problem~\eqref{E: boundary 
condition1},~\eqref{E: boundary condition2},~\eqref{E: Laplace} 
$e^{w_{vu}(x_1,...,x_{2k})}_U{\sin}({\kappa}_{vu}(x_1,...,x_{2k}))_U$
we have\\
\begin{equation}
|{\Delta}_g( e^{ S \ast w_{vu}(x_1,...,x_{2k})}_U\sin( S \ast
{\kappa}_{vu}(x_1,...,x_{2k}))_U)^{10}| \leqslant \varepsilon (\bar{d)}.
\end{equation}
\noindent
such that $\varepsilon (\bar{d)}$ is a strictly decreasing function.
\end{lem}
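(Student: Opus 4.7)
The plan is to exploit the identity $\Delta_g \tilde F = \Delta_g(\tilde F - F)$, where $F := (e^{w_{vu}}\sin\kappa_{vu})^{10}$ is the limiting solution with $\Delta_g F = 0$ and $\tilde F := (e^{S \ast w_{vu}}\sin(S \ast \kappa_{vu}))^{10}$ is its smoothed counterpart. It therefore suffices to bound the difference uniformly on $\bar U$ by a quantity $\varepsilon(\bar d)$ that is strictly monotone in the Nash depth.

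I would first split $\bar U$ into the tubular neighborhood $N_\epsilon := ((U^N \cup U^O) \times (-\epsilon,\epsilon)) \cap U$ and its complement. Away from $N_\epsilon$ the coefficients $g^{ij}$ and $\Gamma^m_{ij}$ are uniformly $C^2$-bounded by the standing requirement~\eqref{E: Christoffel}, so one gets the crude estimate $|\Delta_g(\tilde F - F)| \leq C_{\bar U}\|\tilde F - F\|_{C^2(\bar U \setminus N_\epsilon)}$. A composition argument via Fa\`a di Bruno applied to the outer maps $y \mapsto y^{10}$, $y \mapsto e^y$, $z \mapsto \sin z$, combined with the Nash--Gromov estimates of Definition~\ref{D: smoothing estimates} taken at $\alpha = 2$ for any $\beta \geq 0$, yields
$$\|\tilde F - F\|_{C^2} \;\leq\; C'_{\bar U}\bigl(\|S \ast w_{vu} - w_{vu}\|_{C^2} + \|S \ast \kappa_{vu} - \kappa_{vu}\|_{C^2}\bigr) \;\leq\; C''_{\bar U}\,(i^{-2\bar d} + i^{-2\beta})\bigl(\|w_{vu}\|_{2+\beta} + \|\kappa_{vu}\|_{2+\beta}\bigr),$$
which is strictly monotone in $\bar d$ for fixed $\beta$ and $i$.

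The harder region is $N_\epsilon$, where $g^{ij}$ and $\Gamma^m_{ij}$ are not $C^0$-bounded. Here I would revive the partial-sum representation $g^{ij} = \sum_s (a_{ij})_s$, $\Gamma^m_{ij} = \sum_s (b^m_{ij})_s$, $10 = \sum_s l^s$ with $l = 9/10$, already used to produce $F$ as a limit. Truncation to the first $N$ terms gives a smooth-coefficient Laplacian $\Delta_g^{(N)}$ and a smooth approximant $F_N$ with $\Delta_g^{(N)} F_N \to 0$ uniformly on $N_\epsilon$, the very device that forced the choice of the exponent $l_g = 10$. Applying $\Delta_g^{(N)}$ to $\tilde F$ and comparing with $\Delta_g^{(N)} F_N$ reduces the task to estimating a commutator of the form $[S \ast, \Delta_g^{(N)}]$, evaluated on compositions of $w_{vu}$ and $\kappa_{vu}$ with $\exp$ and $\sin$. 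The Nash-depth conditions $\sum_m a_m \zeta_m^{-j} = 0$ for $j = 1,\ldots,\bar d$ render the kernel of $S^{\text{deep}}$ orthogonal to all monomials up to degree $\bar d$, so the commutator picks up the factor $i^{-2\bar d}$ from the third estimate of Definition~\ref{D: smoothing estimates}, which dominates the polynomial-in-$N$ growth of the truncated Christoffel sums; a diagonal choice $N = N(\bar d) \to \infty$ sufficiently slowly then produces the required bound on $N_\epsilon$.

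The principal technical obstacle is this commutator step inside $N_\epsilon$, where three independent approximations must be controlled simultaneously: the nonlinear compositions $e^{S \ast w}$ and $\sin(S \ast \kappa)$ (handled via Fa\`a di Bruno applied to Taylor remainders as in~\cite{Grom}, Section~2.3.4), the blowup of the metric coefficients (handled by the partial-sum scheme), and the interplay between $N$ and $\bar d$ (handled by the diagonal choice). Once these are balanced, setting $\varepsilon(\bar d)$ equal to the maximum of the two regional bounds gives a strictly decreasing function of $\bar d$, the monotonicity inherited termwise from the explicit $\bar d$-dependence in the Nash--Gromov smoothing estimates.
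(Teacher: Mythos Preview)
Your opening move—the identity $\Delta_g\tilde F=\Delta_g(\tilde F-F)$—is exactly what the paper uses. From that point on, however, the two arguments diverge.

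The paper does \emph{not} split $\bar U$ into a tubular neighbourhood and its complement, and it does not invoke any commutator with a truncated operator. Instead it restricts the class of admissible smoothing kernels by two selection criteria imposed \emph{before} any estimate is attempted: (i) $\sin(S^{vu}_{\zeta_m}\!\ast\kappa_{vu})\big|_{U^N\cup U^O}=0$, and (ii) preservation of the sign of $\det\mathrm{Hess}\cos\kappa_{vu}$. Criterion (i) is the whole point: it guarantees that the smoothed scalar field still vanishes on the loci, so the factor $\sin(S\ast\kappa_{vu})^{10}$ continues to damp the blow-up of $g^{ij}$ and $\Gamma^m_{ij}$ exactly as the unsmoothed $\sin(\kappa_{vu})^{10}$ did (this is why the exponent $l_g=10$ was chosen). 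With that in hand, the paper simply writes each summand of $\Delta_g(\tilde F-F)$, inserts and subtracts a mixed term, and bounds the two resulting products pointwise—one factor always being a difference controlled by the depth-$\bar d$ smoothing estimates, the other being bounded thanks to the retained vanishing at the loci. No region decomposition, no partial-sum truncation, no diagonal $N(\bar d)$.

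Your route inside $N_\epsilon$ has a genuine gap. The object $\tilde F=(e^{S\ast w}\sin(S\ast\kappa))^{10}$ is \emph{not} $S\ast F$: the convolution sits inside the nonlinearities, so the difference $\Delta_g^{(N)}\tilde F-\Delta_g^{(N)}F_N$ does not reduce to a commutator $[S\ast,\Delta_g^{(N)}]$ acting on anything natural. Fa\`a di Bruno lets you compare $e^{S\ast w}$ with $e^{w}$ and $\sin(S\ast\kappa)$ with $\sin\kappa$, but it does not convert $\tilde F$ into a convolution to which a commutator estimate applies. Even granting the reduction, the assertion that $i^{-2\bar d}$ ``dominates the polynomial-in-$N$ growth of the truncated Christoffel sums'' is unsupported: the paper never claims the partial sums grow polynomially in $N$—they diverge on $U^N$, and no rate is given—so the diagonal choice $N=N(\bar d)$ cannot be made without that missing input. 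The paper's selection criterion (i) is precisely the device that makes all of this unnecessary.
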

\begin{proof}
Blowing up of the metric coefficients significantly limits our choices
of the deep smoothing operators. We select them individually for each
particular coefficient, and label accordingly. Thus we define the two
selection criteria:
\begin{equation}
\sin (S^{vu}_{{\zeta}_m} \ast {\kappa}_{vu}) \bigr|_{U^N \cup U^O} =0,
\end{equation}
\noindent
and $m$ large enough to maintain
\begin{equation}
|\det \text{Hess}\cos ({\kappa}_{vu})| > 0 \Longrightarrow
|\det \text{Hess}\cos (S^{vu}_{{\zeta}_m} \ast {\kappa}_{vu})| > 0.
\end{equation}
\noindent
Our criteria are borne out by the fact that $U^N \cup U^O$ is a set
of measure zero, and Definition~\ref{D: smoothing estimates}. All
other features are subordinated. Thus stationary black hole solutions
of EVE take time-independent $S_{{\zeta}_m}$. Symmetries are allowed
so long as they do not interfere with the aforementioned selection
criteria.\\ 
\indent
Inside the ball of radius ${\zeta}^{-1}_m$ centered at 
$(x_1^{\text{blow}},x_2, \cdots, x_{2k})$, we employ the following bound:
\begin{multline}
||\sin(S^{vu}_{{\zeta}_m} \ast {\kappa}_{vu}) - \sin ({\kappa}_{vu})||_0
\leqslant \\C_o(m+1)^{- \bar{d} -1} \max \{ 
|{\kappa}_{vu}(x_1^{\text{blow}} + {\zeta}^{-1}_m,x_2, \cdots, x_{2k})|,
|{\kappa}_{vu}(x_1^{\text{blow}} - {\zeta}^{-1}_m,x_2, \cdots, x_{2k})|\},
\end{multline}
valid for all normalized $S^{vu}_{{\zeta}_m}$ of Nash depth $\bar{d}$ 
and all smooth proper hypersurfaces. In the case of higher codimension,
the estimate to be used is
\begin{equation}
||\sin(S^{vu}_{{\zeta}_m} \ast {\kappa}_{vu}) - \sin ({\kappa}_{vu})||_0
\leqslant \\C_o(m+1)^{- \bar{d} -1} ||{\kappa}_{vu}||_0(B({\zeta}_m^{-1})).
\end{equation}
\noindent
Similarly,
\begin{equation}
||e^{S^{vu}_{{\zeta}_m} \ast w_{vu}} - e^{w_{vu}}||_0
\leqslant C_o(m+1)^{- \bar{d} -1} ||e^{w_{vu}}||_0(B({\zeta}_m^{-1})).
\end{equation}
\indent
To estimate the derivatives we write
\begin{equation}
||\frac{\partial}{\partial x_i}(e^{S^{vu}_{{\zeta}_m} \ast w_{vu}} - 
e^{w_{vu}})||_0\leqslant C_o(m+1)^{- \bar{d} -1} 
||\frac{\partial}{\partial x_i}e^{w_{vu}}||_0(B({\zeta}_m^{-1})).
\end{equation}
\noindent
The rest of estimates is gotten in the same way.\\
\indent
To prove our claim we note that
\begin{multline}
|{\Delta}_g( e^{ S \ast w_{vu}}_U\sin( S \ast
{\kappa}_{vu})_U)^{10}| =\\
|{\Delta}_g((e^{ S \ast w_{vu}}_U\sin( S \ast
{\kappa}_{vu})_U)^{10} - (e^{w_{vu}}_U\sin({\kappa}_{vu})_U)^{10})|.
\end{multline}
\indent
From this point on, the proof becomes a tedious computation. To give
a taste, we tackle one summand:
$$|g^{ij}{\Gamma}^a_{ij}((\frac{\partial}{\partial x_a}
e^{10S^{vu}_{{\zeta}_m} \ast w_{vu}}){\sin(S^{vu}_{{\zeta}_m}} 
\ast {\kappa}_{vu})^{10} - (\frac{\partial}{\partial x_a}e^{10w_{vu}})
{\sin({\kappa}_{vu})}^{10})|=$$
$$|g^{ij}{\Gamma}^a_{ij}((\frac{\partial}{\partial x_a}
e^{10S^{vu}_{{\zeta}_m} \ast w_{vu}}){\sin(S^{vu}_{{\zeta}_m}}  
\ast {\kappa}_{vu})^{10} - (\frac{\partial}{\partial x_a}
e^{10S^{vu}_{{\zeta}_m} \ast w_{vu}}){\sin{\kappa}_{vu})}^{10} + $$
$$(\frac{\partial}{\partial x_a}e^{10S^{vu}_{{\zeta}_m} \ast w_{vu}})
\sin({\kappa}_{vu})^{10} -(\frac{\partial}{\partial x_a}e^{10w_{vu}})
{\sin({\kappa}_{vu})}^{10})| =$$
$$|g^{ij}{\Gamma}^a_{ij}((\frac{\partial}{\partial x_a}
e^{10S^{vu}_{{\zeta}_m} \ast w_{vu}} - \frac{\partial}{\partial x_a}
e^{10w_{vu}}){\sin(S^{vu}_{{\zeta}_m}} \ast {\kappa}_{vu})^{10} +$$
$$(\frac{\partial}{\partial x_a}e^{10w_{vu}})({\sin(S^{vu}_{{\zeta}_m}}
\ast {\kappa}_{vu})^{10}- \sin({\kappa}_{vu})^{10}))| \leqslant$$
$$|g^{ij}{\Gamma}^a_{ij}|(||(\frac{\partial}{\partial x_a}
e^{10S^{vu}_{{\zeta}_m} \ast w_{vu}} - \frac{\partial}{\partial x_a}
e^{10w_{vu}})||_0|({\sin(S^{vu}_{{\zeta}_m}}\ast 
{\kappa}_{vu})^{10}| +$$
$$|\frac{\partial}{\partial x_a}e^{10w_{vu}}||{\sin(S^{vu}_{{\zeta}_m}}
\ast {\kappa}_{vu})^{10}- \sin({\kappa}_{vu})^{10})|) \leqslant$$
$$|g^{ij}{\Gamma}^a_{ij}|(C_0 (m+1)^{- \bar{d} -1}
||\frac{\partial}{\partial x_a}e^{10w_{vu}}||_0 
|{\sin(S^{vu}_{{\zeta}_m}}\ast {\kappa}_{vu})^{10}|+ $$
$$|\frac{\partial}{\partial x_a}e^{10w_{vu}}|
|\sum_{b=0}^9{\sin(S^{vu}_{{\zeta}_m}}\ast {\kappa}_{vu})^{9-b}
\sin({\kappa}_{vu})^{b})|C_0(m+1)^{-\bar{d} -1}||{\kappa}_{uv}||_0).$$
\end{proof}
\noindent
Representatives of this set of infinitely differentiable functions can,
in principle, be used as coefficients of a preframe, but,
unlike the genuine solutions, they are not related
via the action of local isometries. Their problem of nonuniqueness 
is further exacerbated by the involvement of nonunique smoothing operators.
With an eye on Cartan's method, specifically, reduction of the coframe 
bundle down to a unique coframe, we have to effect further transformation.
Geometrically, we want to demonstrate that the preframes with
coefficients obtained this way cannot occur in a smooth configuration. 
There must exist a factorization into preternatural rotations
and regular functions.\\

\subsection{Spectral theory of the Hill operator}

\subsubsection{An overview}
We list all the relevant information on the Hill operator here.
Our sources (ranging from very basic to recent) are the book
by Magnus and Winkler~\cite{MW}, a seminal paper by McKean and
Trubowitz~\cite{McT}, and the monograph by Feldman, Kn\"{o}rrer and
Trubowitz~\cite{FKT}.\\
\indent
$Q$ denotes the Hill operator $-\frac{d^2}{dx^2} +q(x)$ with
a fixed $q(x)$, an infinitely differentiable function of
period 1. The function $y_1(x, \lambda)$, respectively
$y_2(x, \lambda)$, is the solution of $Qy = \lambda y$ with
$y_1(0, \lambda) =1,\;\;y'_1(0, \lambda) =0$, respectively
$y_2(0, \lambda) =0,\;\;y'_2(0, \lambda) =1$. These functions
allow the following integral representation~\cite{MW}:
\begin{eqnarray}\label{E: Hill solutions}
y_1(x, \lambda) = \cos(\sqrt{\lambda}\xi) +
\int^{\xi}_0 \frac{\sin (\sqrt{\lambda(\xi - \eta)})}{\sqrt{\lambda}}
q(\eta)y_1(\eta, \lambda)d\eta,\\
y_2(x, \lambda) = \frac{\sin(\sqrt{\lambda}\xi)}{\sqrt{\lambda}} +
\int^{\xi}_0 \frac{\sin (\sqrt{\lambda(\xi - \eta)})}{\sqrt{\lambda}}
q(\eta)y_2(\eta, \lambda)d\eta.
\end{eqnarray}
\indent
The spectrum of $Q$ acting on the class of twice differentiable
functions of period 2 is called periodic. It is a sequence  of
real single or double eigenvalues\footnote{The adjectives `simple' and
`double' refer to the dimension of the corresponding eigenspace.}
tending to infinity:
$$ {\lambda}_0 <   {\lambda}_1 \leqslant  {\lambda}_2  <{\lambda}_3
\leqslant  {\lambda}_4 <  {\lambda}_5  \leqslant {\lambda}_6 < \cdots 
\uparrow \infty.$$
The lowest eigenvalue ${\lambda}_0$ is simple, the eigenfunction
$f_0$ being root-free and of period 1. Then come pairs of eigenvalues
${\lambda}_{2i-1}  \leqslant {\lambda}_{2i},\;\;i = 1,2, \cdots$,
equality signifying that the eigenspace is of dimension 2. Both the
eigenfunctions $f_{2i-1}$ and $f_{2i}$ have $i$ roots in a period
$0 \leqslant x <1$ and are themselves of period 1 or 2 according to
the parity of $i$, i. e. being of period 1 if $i = 2,4,6 \cdots$,
and of period 2 if $i = 1,3,5 \cdots$. The eigenfunctions are 
normalized by
$$\int_0^1 f^2_i(x)dx =1, \;\;\; i = 0,1,2,\cdots .$$
The eigenvalues obey the estimate
\begin{equation}\label{E: lambda double}
{\lambda}_{2i-1}, {\lambda}_{2i} = i^2 {\pi}^2 + \int^1_0 q(x)dx
+ O(i^{-2})\;\; \textnormal{as} \; i \uparrow \infty.
\end{equation}
The periodic spectrum falls into two parts: the double spectrum of
pairs $ {\lambda}_{2i-1} = {\lambda}_{2i}$ and the simple spectrum
comprised of distinct eigenvalues. The interval $({\lambda}_{2i-2}  
{\lambda}_{2i-1})$ is an interval of stability; the nomenclature
is suggestive since every solution of $Qy = \lambda y$ is bounded
if ${\lambda}_{2i-2}< \lambda <  {\lambda}_{2i-1}$. The complementary
intervals of instability (also called lacunae) $(-\infty, {\lambda}_0]$,
$[{\lambda}_{2i-1}  {\lambda}_{2i}], \;\;i= 1,2,\cdots$, behave 
differently: no solution of $Qy =\lambda y$ is bounded for $\lambda
< {\lambda}_0$ or for $ {\lambda}_{2i-1}< \lambda < {\lambda}_{2i}$.
The periodic spectrum is all double except ${\lambda}_0$ if and only
if the potential is constant.\\
\indent
An alternative way to describe the spectrum makes use of the 
discriminant $\bigstar (\lambda)$ defined as
\begin{equation}\label{D: discriminant}
\bigstar (\lambda) \overset{\textnormal{def}}{=} 
y_1(1, \lambda) + y'_2(1, \lambda).
\end{equation}
\noindent
All eigenvalues lead to $\bigstar ({\lambda}_i) = \pm 2$ with
signature +1 or -1 according to whether $i \equiv 0, 3 \mod 4$ or
$i \equiv 1, 2 \mod 4$, therefore the periodic spectrum is just
the set of roots of ${\bigstar}^2(\lambda) -4 =0.$ The dicriminant
is an entire function of order $\frac{1}{2}$ and type 1, informally,
$\bigstar (\lambda) \backsim \cos \sqrt{\lambda}$, so 
$\bigstar (\lambda) + 2$, respectively $\bigstar (\lambda)-2$, may
be expressed as a constant multiple of the canonical product of
${\lambda}_0, {\lambda}_3, {\lambda}_4 \cdots$, respectively
${\lambda}_1, {\lambda}_2, {\lambda}_5, \cdots$. See~\cite{McT} and
references therein.\\
\indent
Apart from the periodic spectrum, there are the roots ${\mu}_i, 
i=1, 2,\cdots$ of $y_2(1,{\mu}_i)= 0$ forming the spectrum of
$Q$ acting  on the class of twice differentiable functions with
$f(0)=f(1)=0$. They interlace the periodic spectrum 
${\lambda}_{2i-1}< {\mu}_i <  {\lambda}_{2i}$, and fall into two
classes: the trivial roots at the double periodic eigenvalues,
and the remaining nontrivial roots in the nondegenerate intervals 
of instability. They are collectively named the tied spectrum.\\
\indent
Lastly, there is the reflecting spectrum ${\nu}_i,\;i=0,1,2,\cdots$
formed by the roots of $y_1(1, {\nu})=0$ encountered as $Q$ acts on
the class of functions $f \in C^2([0.1])$ obeying $f'(0) =f'(1) =0$.
They are similar to the tied spectrum eigenvalues with one notable
exception: there is an extra root ${\nu}_0$ in the lacuna
$(- \infty, {\lambda}_0]$.\\
\noindent
This classical result is known as the coexistence theorem for
the Hill operator.
\begin{backthm}[\cite{MW}, Theorem 7.11]\label{T: Magnus coexist}
Hill equations with trigonometric polynomial potentials 
cannot have finite trigonometric polynomial solutions.
\end{backthm}
\noindent
One way it can be formalized is as follows: 
\begin{equation*}
[\frac{{\partial}^2}{\partial x^2} + \sum_i^m a_i\cos(ix)]
\sum_i^N b_i\sin(ix) \ne \lambda \sum_i^N b_i\sin(ix)\;\;\forall  
\lambda, \;\; m, N < \infty.
\end{equation*}
\indent
To study the spectrum via modern functional analysis, McKean and
Trubowitz~\cite{McT} introduce the class of smooth functions of
period of length 1 having a fixed periodic spectrum of the Hill
operator: ${\lambda}_0 < {\lambda}_1 \leqslant {\lambda}_2 <
{\lambda}_3 \leqslant {\lambda}_4 < \cdots$. We designate this
isospectral class $\mathfrak{T} \subset C^{\infty}(\mathbb{R}/
\mathbb{Z})$. Geometrically, it is nicely nested inside
a Hilbert space where normalized eigenfunctions and their derivatives
constitute an orthonormal basis. While technically not being a manifold,
$\mathfrak{T}$ possesses a tangent bundle and a normal bundle viewed
as subspaces of the ambient Hilbert space. These findings are summarized in
\begin{backthm}[\cite{McT}, Theorem 1, Section 8]\label{T: McKean basis}
Let $T\mathfrak{T}$ be the span of $\ddx(f^o_{2i})^2, i= 1,...$ in $L^2_1$, 
and let $N\mathfrak{T}$ be the span of $(f^o_{2i})^2, i= 1,...$ 
supplemented by the span of $(f^{\times}_{2i-1})^2, i= 1,...$ and 
$(f^{\times}_{2i-1})f^{\times}_{2i}$ for such double eigenvalues
as may exist. Then \textnormal{(1)} $T\mathfrak{T} \perp N\mathfrak{T}$, 
\textnormal{(2)} $T\mathfrak{T} \oplus N\mathfrak{T} = L^2_1$, 
\textnormal{(3)} the unit function {\bf{1}} belongs to $N$, while the 
functions $$ F: \sqrt{2}[(f^o_{2i})^2 - 1],\; \sqrt{2}[(f^{\times}_{2i})^2 
- 1],\; -2^{\frac{3}{2}}f^{\times}_{2i-1}f^{\times}_{2i},\;
-\sqrt{2}(2 \pi)^{-1} f^o_{2i}\ddx f^o_{2i}$$
form an oblique base to the annihilator ${\bf{1}}^{\circ}$ of the
unit function, meaning that any $f \in {\bf{1}}^{\circ}$ can be
uniquely written as $f = \sum c_iF_i$, its norm
$$\sqrt{\int_0^1 |f(x)|^2dx}$$
being comparable to $\sqrt{\sum c_i^2}$.
\end{backthm}
\indent
$\mathfrak{T}$ is (generally) homeomorphic to the infinite-dimensional torus,
an arbitrary $q(x) \in \mathfrak{T}$ being uniquely determined by the sequence
${\mu}_i,\;i=0, 1, 2,\cdots$ and the norming constants. The former may be
regarded as coordinates on the torus. Each such sequence encodes a periodic $q$.\\
\indent
It is an important geometrical fact that $\mathfrak{T}$ has a Poisson bracket.
$\frac{d}{dx}$ maps $N\mathfrak{T}|_{f^o_{2i}}$ onto $T\mathfrak{T}$. Hence there
is the infinite-dimensional bilinear operation utilizing that property given by 
\begin{equation}
\{F,\;G\} = \int^1_0 \frac{\partial F}{\partial q}\frac{d}{dx}
\frac{\partial G}{\partial q}dx.
\end{equation}
\noindent
This system is completely integrable. The only dissimilarity with the 
classical Poisson bracket is the dimension of the normal space being larger
than the dimension of the tangent space by one. Thus there is a degeneracy
inherent in this operation.\\
\indent
The periodic spectrum, ergo $\mathfrak{T}$, is preserved by the flow with
Hamiltonian $H=\bigstar (\lambda)$, $\lambda$ fixed. The flow is defined by
solving $\frac{\partial q}{\partial t} = Xq = \frac{d}{dx}\frac{\partial H}
{\partial q}$. $X$ can be viewed as a tangent vector field to $\mathfrak{T}$.
The flows commute just as they do in finite-dimensional
classical mechanics whenever the fields preserve each other Hamiltonians
(\cite{McT}, Section 3, Theorem 1, Amplification 2),
since with the $X$ above we get $X\bigstar (\mu) =0$, and the flow of 
$H=\bigstar (\mu)$, say $Y$, returns the favor: $Y\bigstar (\lambda)=0$.
This allows natural representations of one-dimensional subgroups of Lie
groups with Hamiltonian action.\\
\indent 
$\mathfrak{T}$ can be considered the real part of a complex Jacobi variety of 
the transcendental hyperelliptic irrationality $\sqrt{{\bigstar}^2(\lambda)-4}$.
The differentials of the first kind on the attendant Riemann surface
${\mathfrak{T}}_{\mathbb{C}}$ of genus 
$g=\infty$ are of the form~\cite{McT, McT2}
\begin{equation}\label{E: trans differentials}
d\Phi = \frac{\phi }{\sqrt{{\bigstar}^2(\lambda)-4}}d\lambda.
\end{equation}
\noindent
$\phi$ belongs to the class $I^{\frac{3}{2}}$ of integral functions of
order $\frac{1}{2}$ and type at most 1 controlling its growth such that
\begin{equation}
\int^{\infty}_0 |\phi(\mu)|^2 {\mu}^{\frac{3}{2}}d \mu < \infty.
\end{equation}
\noindent
$I^{\frac{3}{2}}$ is naturally a separable Hilbert space. Its dual,
denoted $I^{\frac{3}{2}*}$, consists of absolutely summable sequences 
${\phi}^* =\{{\phi}^*_1, {\phi}^*_2, \cdots\} \in l^2_{-4}$, such that 
\begin{equation}
\sum^{\infty}_{i=1} |{\phi}^*_i|^2 {i}^{-4} < \infty.
\end{equation}
\noindent
The Jacobi map takes the form (modulo periods):
\begin{equation}
\sum_{v=0}^{\infty} 2\int^{{\mu}_v +i\sqrt{{\bigstar}^2({\mu}_v)-4}}
_{{\lambda}_{2v-1}}\frac{\phi(\mu)d \mu}{\sqrt{{\bigstar}^2({\mu}_v)-4}} 
= \sum_{j=1}^{\infty}\phi({\lambda}_{2j}){\phi}^*_j.
\end{equation}
\noindent
The path of integration is permitted to wind around the circle $n_v$ 
times provided $\sum n_v^2 v^{-2} < \infty$.

\subsubsection{Hyperelliptic curves}
The predominant source for this subsection is the monograph
by Trubowitz et al.~\cite{FKT}. The objects of study
are the hyperelliptic Riemann surfaces ${\mathfrak{T}}_{\mathbb{C}}$,
appearing in conjunction with purely simple spectra of the Hill operator,
colloquially known as Hill surfaces. We do not tackle a more complicated
case of Hill operator with mixed (single and double eigenvalue) spectrum,
and the limiting case of purely double spectrum does not entail
hyperelliptic Riemann surfaces at all.\\
\indent
From now on, ${\mathfrak{T}}_{\mathbb{C}}$ is an open Riemann surface
of infinite genus. We assume it possesses an infinite canonical homology
basis $A_1,B_1,A_2,B_2,\cdots$. With the Hodge decomposition, the 
first Hodge-Kodaira cohomology group $H^1_{HK}({\mathfrak{T}}_
{\mathbb{C}})$ consists of smooth, closed and coclosed differential
forms. It is a bona fide Hilbert space with the inner product 
\begin{equation}
< \eta, \beta > = \int_{{\mathfrak{T}}_{\mathbb{C}}} \eta 
\wedge \overline{\ast\beta}.
\end{equation} 
\noindent
The differential forms living in 
\begin{equation}
\textnormal{Hol}({\mathfrak{T}}_{\mathbb{C}})=
\{\beta \in H^1_{HK}({\mathfrak{T}}_{\mathbb{C}})|
\ast{\beta} = -i\beta \}
\end{equation}  
\noindent
are declared to be holomorphic. It follows that $\textnormal{Hol}
({\mathfrak{T}}_{\mathbb{C}})$ is a closed subspace of 
$H^1_{HK}({\mathfrak{T}}_{\mathbb{C}})$ with inner product
\begin{equation}
< \eta, \beta > = \int_{{\mathfrak{T}}_{\mathbb{C}}} \eta 
\wedge \overline{\ast\beta}= i\int_{{\mathfrak{T}}_{\mathbb{C}}} \eta 
\wedge \overline{\beta}.
\end{equation} 
\noindent
These forms are the backbone of the holomorphic structure on 
noncompact complex varieties $\mathbb{S}$ of dimension one, not 
just Riemann surfaces. In particular, $\text{Hol}(\mathbb{S})$ nail 
down the holomorphic structure on pairs of complex planes
identified along a discrete set of points $[(\mathbb{C}\backslash 
\{{\lambda}_i\})\times  (\mathbb{C}\backslash \{{\lambda}_i\})]/_
{\sim}$. Such varieties will serve as limiting cases for 
Hill potentials with purely double spectrum.\\
\indent
Now we impose an extra condition: on ${\mathfrak{T}}_{\mathbb{C}}$
there exists an exhaustion function with finite charge - a proper
nonnegative Morse function $h(z)$ that satisfies
\begin{equation}
\int_{{\mathfrak{T}}_{\mathbb{C}}}|d\ast{dh}| < \infty.
\end{equation}
\indent
For each $\beta \in \textnormal{Hol} ({\mathfrak{T}}_{\mathbb{C}})$,
let $D_r(z), z \in ({\mathfrak{T}}_{\mathbb{C}})$ be the disk of 
radius $r$ centered at $z$. We have a representation in terms of
a local coordinate $\zeta$:
\begin{equation}
\beta|_{D_r(z)} = f(\zeta (z)) d\zeta.
\end{equation}
\noindent
One might think of this as an analogue of the Poincare lemma for
holomorphic (possibly multivalued) functions.\\
\indent
Now we introduce a linear functional taking the Riemann
surface into the Hilbert space dual to ${\textnormal{Hol}}
({\mathfrak{T}}_{\mathbb{C}})$,
\begin{equation}
{\delta}_{z, \zeta} (\cdot): D_r(z) \longrightarrow
{\textnormal{Hol}}^*({\mathfrak{T}}_{\mathbb{C}})\;\;
\textnormal{via}
\end{equation}
\begin{equation}
{\delta}_{z, \zeta} (\beta)\overset{\textnormal{def}}{=}
\oint_{\partial D_r(z)} f(\zeta)d\zeta \;=\;
\frac{1}{2\pi} \int_0^{2\pi} f(\zeta(z) + re^{i\alpha})d\alpha.
\end{equation}
\noindent
${\delta}_{z, \zeta} (\cdot)$ is a weakly analytic map that is 
actually holomorphic. As it stands, the choice of local
coordinates determines the local representative $f(\zeta)$.
However, there is a way to get rid of that dependence given by
the canonical map $\aleph$. It utilizes the Cauchy-Riemann
equivalence relation. Namely, ${\delta}_{z, \zeta} \sim
{\delta}_{z, {\zeta}'}$ if and only if the coordinates
${\zeta},{\zeta}'$ are both holomorphic with respect to $z$:
\begin{equation}
\aleph = {\delta}_{z, \zeta}/ \sim \;: \;{\mathfrak{T}}_{\mathbb{C}}
\longrightarrow {\textnormal{Hol}}^*({\mathfrak{T}}_{\mathbb{C}})/
 _{\textnormal{Cauchy-Riemann}} = \mathbb{P}
({\textnormal{Hol}}^*({\mathfrak{T}}_{\mathbb{C}})).
\end{equation}
\begin{defn}
A Riemann surface is hyperelliptic if there is a finite subset
$I \in {\mathbb{P}}^1(\mathbb{C})$, a discrete subset $S \in 
{\mathbb{P}}^1(\mathbb{C})\backslash I$, and a proper holomorphic
map $\tau :\;\;{\mathfrak{T}}_{\mathbb{C}} \longrightarrow 
{\mathbb{P}}^1(\mathbb{C})\backslash I$ of degree 2 that ramifies
over $S$.
\end{defn}
\indent
The map $\tau$ is called the hyperelliptic projection for 
${\mathfrak{T}}_{\mathbb{C}}$. One can utilize it to construct 
an exhaustion function with finite charge.\\
\indent
The canonical map $\aleph$ factors through the following
commutative diagram:
\begin{equation}\label{E: aleph}
\begin{tikzcd}
{\mathfrak{T}}_{\mathbb{C}}\arrow{r}{\aleph}\arrow{d}{\tau} &
\mathbb{P} ({\text{Hol}}^* ({\mathfrak{T}}_{\mathbb{C}}))\\
{\mathbb{P}}^1(\mathbb{C})\backslash I \arrow{ur} &
\end{tikzcd}
\end{equation}
\indent
On Riemann surfaces that admit an exhaustion function with finite
charge, there exists a unique canonical basis of the Hilbert space
$\textnormal{Hol}({\mathfrak{T}}_{\mathbb{C}})$ such that
\begin{equation}
\oint_{A_i} {\beta}_j ={\delta}_{ij}.
\end{equation}
\begin{defn}
The Riemann period matrix expressed  in terms of the 
canonical homology cycles and the canonical basis of holomorphic
forms has the $ij$-entry 
$$ {\mathfrak{R}}^{ij} = \oint_{B_i} {\beta}_j.$$
\end{defn}
\noindent
$[{\mathfrak{R}}^{ij}]$ is symmetric, and its imaginary part
$\Im [{\mathfrak{R}}^{ij}]$ is positive-definite. There is
no such luxury as a unique canonical basis on $\text{Hol}
(\mathbb{S})$ for $\mathbb{S} =[(\mathbb{C}\backslash 
\{{\lambda}_i\})\times  (\mathbb{C}\backslash \{{\lambda}_i\})]/_
{\backsim}$, but since the cycles $B_i$ become real line segments,
two line integrals traversed in opposite 
directions with square integrable functions identical on the
two copies of the complex plane result in   
$[{\mathfrak{R}}^{ij}](\mathbb{S}) = [0]$.\\

\subsubsection{Theta functions}
\paragraph{}
Now we tackle the question of existence and naturality for 
theta functions on hyperelliptic Riemann surfaces of infinite
genus treated in the second article by McKean and Trubowitz
~\cite{McT2}. Here we introduce a closed linear subspace
$K \subset I^{\frac{3}{2}}$   containing all integral functions
asymptotically dropping off fast enough to satisfy
\begin{equation}
\sum_{m =1}^{\infty}\frac{|\phi ({\mu}_m)|^2}{({\lambda}_{2m} -
{\lambda}_{2m-1})^2} < \infty,\;\; \forall {\mu}_m \in 
[{\lambda}_{2m} - {\lambda}_{2m-1}].
\end{equation}
\noindent
Such a differential is completely determined by its real periods:
\begin{equation}
{\mathsf{A}}_m (\phi) = \int_{{\lambda}_{2m-1}}^{{\lambda}_{2m}}
\frac{\phi(\lambda)}{\sqrt{{\bigstar}^2(\lambda)-4}} d\lambda,
\end{equation}
\noindent
so that ${\mathsf{A}}_m (\phi) = 0,\;m \geqslant 1$ implies
$\phi =0$ by interpolation. ${\mathsf{B}}_m (\phi)$ are 
dependent on the holomorphic structure at infinity.  \\
\indent
${\mathsf{A}}_m (\cdot)$ are naturally elements of the dual
Hilbert space $I^{\frac{3}{2}*}$ and they furnish an orthogonal
basis. Any individual element of the $l^2$ sequence 
${\phi}^* \in I^{\frac{3}{2}*}$ can be decomposed into 
${\phi}^*_v = \sum a_m^v {\mathsf{A}}_m (\cdot)$. To enter the 
realm of $K^*$, ${\phi}^*$ ought to satisfy $\sum ({\phi}^*_m)^2 
m^2 ({\lambda}_{2m}-{\lambda}_{2m-1})^2< \infty$.\\
\indent
Our next building block is the Hilbert space $H[\phi]$ of differentials 
(not just integral functions) of the transcendental irrationality
~\eqref{E: trans differentials}. The Hilbert space norm is 
\begin{equation}
H[\phi] = i \int_{{\mathfrak{T}}_{\mathbb{C}}} d\Phi \wedge
\overline{d \Phi} < \infty.
\end{equation}
\noindent
Sadly, $H[\phi] \nsubseteq I^{\frac{3}{2}}$. That is why we have 
to clutter the proofs with an extra space. Still, we would denote
the elements of its dual space $H^*[\phi]$ by the same symbol,
${\phi}^* $. It is an absolutely summable sequence of complex
numbers, each pertaining to a real part cycle of some
hyperelliptic Riemann surface of infinite genus. $H[\phi]$ is
endowed with an orhonormal basis ${\mathtt{1}}_j \in K,\;(j 
\geqslant 1)$ such that ${\mathsf{A}}_m ({\mathtt{1}}_j) =
{\delta}^m_j$, and 
$$ \int_{{\mathfrak{T}}_{\mathbb{C}}} d\Phi \wedge
\overline{{\mathtt{1}}_j } = -2{\mathsf{B}}_j (\phi) .$$
\indent
Now, by analogy with the classical case, the theta function,
$\theta$, for the Hill surface  ${\mathfrak{T}}_{\mathbb{C}}$,
is defined for ${\phi}^* = {\phi}^*_{\text{real}} + 
i{\phi}^*_{\text{im}} \in K^* + iH^*[\phi]$ by the formula
\begin{equation}
\theta ({\phi}^*) = \sum e^{2\pi i{\phi}^*(\phi)}
e^{-\frac{\pi}{2}{\phi}^*_{\text{im}}(\phi)},
\end{equation}
\noindent
$\phi = \sum m_j {\mathtt{1}}_j$, for finite sums only to 
ensure convergence. With this expression, the analogy with
the compact Riemann surfaces is complete, as we can state
$$\theta ({\phi}^*+ m_j {\mathsf{A}}_j({\mathtt{1}}_j))=
\theta ({\phi}^*),$$
$$\theta ({\phi}^*+ {\mathsf{B}}_j({\mathtt{1}}_j))=
e^{-2\pi i({\phi}^*({\mathtt{1}}_j) + \frac{1}{2}
{\mathsf{B}}_j({\mathtt{1}}_j))}\theta ({\phi}^*).$$
\indent
An explicit connection between the holomorphic structure on 
hyperelliptic Riemann surfaces of finite genus (stemming from
the Hill operators with a finite number of simple eigenvalues), 
and the elements of the isospectral class per se had been found 
by Its and Matveev~\cite{IM} and later generalized to encompass 
hyperelliptic Riemann surfaces of infinite
genus by McKean and Trubowitz~\cite{McT2}. 
\begin{equation}\label{E: Its-Matveev}
q(\xi) = -2 \frac{{\partial}^2}{\partial {\xi}^2}\log \theta
({\phi}^* + \xi v_1),\;\;\;0 \leqslant\xi<1,
\end{equation}
\noindent
and $v_1$ is expressed in terms of periods (\cite{McT2}, Section 9):
\begin{equation}
v_1 (\phi) = \sum_{j \geqslant 1} 2j\int^{{\lambda}_{2j}}_
{{\lambda}_{2j-1}} d\Phi.
\end{equation}
\noindent
One remarkable property of this equation is that every $q(\xi)$
inside the isospectral class corresponds to its own unique
differential. Uncertainty inherent in the winding numbers of the 
integration contours has been absorbed into the theta function.

\subsubsection{Torelli theorem}
\paragraph{}
The most far-reaching result in hyperelliptic function theory that
we depend on in a crucial proof is the Torelli theorem for 
hyperelliptic Riemann surfaces of infinite genus. Without 
equivocation, Torelli theorem says that the Riemann period matrix
is the one and only invariant of (certain class of) hyperelliptic
Riemann surfaces of infinite genus. This result is applicable
to a broader class of surfaces of infinite genus, so we restate it 
as proven in (\cite{FKT}, Chapter 2, Section 11). We sketch the 
geometric hypotheses (GH1) - (GH6), although exact formulations
take several pages to state fully. The decomposition
into a union of fragments - a compact one with
a finite number of boundary components ${\mathfrak{T}}^{\text{com}}
_{\mathbb{C}}$, a finite number of regular components each 
attached to one boundary component of ${\mathfrak{T}}^{\text{com}}
_{\mathbb{C}}$ denoted ${\mathfrak{T}}^{\text{reg}}_{\mathbb{C}}$,
an infinite number of closed `handles' ${\mathfrak{T}}^{\text{han}}
_{\mathbb{C}}$ is used below.\\

\begin{enumerate}
\item
(GH1) Regular fragments. Informally, the closure of a regular 
fragment is biholomorphic to a complex plane minus an open 
neighborhood of a discrete set.
\item
(GH2) Handles. Some restrictions are placed on  the deformations
of cylinders that constitute handles.
\item
(GH3) Gluing ${\mathfrak{T}}^{\text{reg}}_{\mathbb{C}}$ and
${\mathfrak{T}}^{\text{han}}_{\mathbb{C}}$. It deals with the 
possible overlaps of two handles. Some regularity conditions are
imposed.
\item
(GH4) Gluing in the compact fragment. Regularity of the gluing
biholomorphic maps so as not to disturb the canonical homology
cycles forming the basis of $H_1({\mathfrak{T}}^{\text{com}}
_{\mathbb{C}}, \;\mathbb{Z})$ to induce an inclusion into
$H_1({\mathfrak{T}}_{\mathbb{C}}, \;\mathbb{Z})$.
\item
(GH5) Estimates on the Gluing maps. The handles are to be
separated, and their density at infinity is to remain bounded.
\item
(GH6) The discrete set of points used to guide the attachment
of handles has to be distributed in a particular way.
\end{enumerate}
\noindent
The first four hypotheses are topological in nature. The
estimates in (GH5) control the holomorphic structure of 
${\mathfrak{T}}_{\mathbb{C}}$.
\begin{backthm}\label{T: Torelli}\textnormal{(Torelli)}
Let ${\mathfrak{T}} = {\mathfrak{T}}^{\text{com}}_{\mathbb{C}} \cup
{\mathfrak{T}}^{\text{reg}}_{\mathbb{C}} \cup {\mathfrak{T}}
^{\text{han}}_{\mathbb{C}}$ and\\ ${\mathfrak{T}'_{\mathbb{C}}} = 
{\mathfrak{T}'}^{\text{com}}_{\mathbb{C}} \cup
{\mathfrak{T}'}^{\text{reg}}_{\mathbb{C}} \cup {\mathfrak{T}'}^
{\text{han}}_{\mathbb{C}}$ be Riemann surfaces that fulfill 
the hypotheses (GH1)-(GH6). Denote 
their canonical homology bases by $A_1, B_1, A_2, B_2, \cdots$ 
and\\ $A'_1, B'_1, A'_2, B'_2, \cdots$. Let $[{\mathfrak{R}}^{ij}]$,
respectively $ [{\mathfrak{R}}^{'ij}]$ be the associated period
matrices. If $[{\mathfrak{R}}^{ij}] = [{\mathfrak{R}}^{'ij}]$ for
all $i,j \in \mathbb{Z}$, then there is a biholomorphic map
$BH: {\mathfrak{T}}_{\mathbb{C}} \longrightarrow {\mathfrak{T}'}
_{\mathbb{C}}$ and 
$\epsilon \in \{\pm 1\}$ such that for all $j \in \mathbb{Z}$
$$BH_* (A_j) = \epsilon A'_j, \;\; BH_* (B_j) = \epsilon B'_j.$$
\end{backthm}
\indent 
Trubowitz et al. (\cite{FKT}, Section 12) verify that Hill surfaces 
satisfy the hypotheses (GH1)-(GH6). In general, not all
hyperelliptic surfaces do. Another well-behaved kind of Riemann
surfaces of infinite genus are complexified Fermi curves that
serve as spectral curves of the two-dimensional Schr\H{o}dinger
operator with doubly periodic potentials discussed at length
in (\cite{FKT}, Section 16).

\subsubsection{Regularized standard potentials}
Fix an open set $U \subset M$. Next, consider
$\partial U$ - a smooth closed hypersurface, and via Implicit function 
theorem set $x_1^{-}(x_2, ...,x_{2k})$ and $x_1^{+}(x_2, ...,x_{2k})$
as $x_1$- coordinates of the hypersurface $\partial U$. On the space
of real-valued measurable functions on $U$, define the inner product
\begin{equation*}
< f, h>_{U} \;\overset{\textnormal{def}}{=} \; 
\frac{1}{\text{vol}(U)}\int_Ufh\; dx_1.....dx_{2k}.
\end{equation*}
The associated norm would be
\begin{equation*}
||f||_{U} \overset{\textnormal{def}}{=}\sqrt{
\frac{1}{\text{vol}(U)}\int_Uf^2 dx_1.....dx_{2k}}.
\end{equation*}
This is just $L^2(U)$. From that, we extract all measurable functions
periodic in $x_1$ with periods $\frac{2\pi}{x_1^{+}(x_2, ...,x_{2k}) -
x_1^{-}(x_2, ...,x_{2k})}$. We denote this closed linear subspace 
$L^2([x_1^{-}, x_1^{+}] \times (U/x_1))$. It allows a partial inner
product:
\begin{equation*}
< f, h>_{x_1} \;\overset{\textnormal{def}}{=} 
\;\frac{1}{x_1^{+}(x_2, ...,x_{2k}) -
x_1^{-}(x_2, ...,x_{2k})}{\int_{{x_1^{-}}}^{x_1^{+}}}fh \;dx_1,
\end{equation*}
\noindent
and a partial norm\\
\begin{equation*}
||f||_{x_1} \;\overset{\textnormal{def}}{=} 
\;\sqrt{\frac{1}{x_1^{+}(x_2, ...,x_{2k}) -
x_1^{-}(x_2, ...,x_{2k})}{\int_{{x_1^{-}}}^{x_1^{+}}}f^2 \;dx_1},
\end{equation*}
We interpret $L^2([x_1^{-}, x_1^{+}] \times (U/x_1))$ as a family
of Hilbert spaces of periodic functions parameterized by $(x_2, ...,x_{2k})$.
Within that family, there is a subfamily of smooth periodic functions of $x_1$
smoothly parameterized by $(x_2, ...,x_{2k})$. We 
call that subfamily $C^{\infty}([x_1^{-}, x_1^{+}] \times (U/x_1))$. It
is a closed linear subspace: \\
$C^{\infty}([x_1^{-}, x_1^{+}] \times (U/x_1))\subsetneq C^{\infty}(U)$.\\
\indent
For a fixed metric coefficient, $g^{vw}$, we run through all ${\kappa}_{vw}$
satisfying~\eqref{E: Laplace}, and all the allowable kernels of the smoothing
operator to produce all the attendant periodic potentials.\\ 
\indent
The equation below is set for
$U^N_{vw} \cap U = \{x_1=x_1^{\textnormal{blow}}\}$, and
$U^O_{vw} \cap U = \emptyset$. If there are multiple hypersurfaces, 
some zeroes, or both, then ${\lambda}_0 \mapsto {\lambda}_{2s}$, and to
determine $s$ we count the number of roots in the product
$\sin g_{ij}^- \sin g_{ij}^+$ as $x_1$ traverses the length of $U$ through
$U^N_{ij} \cap U$ and on to $U^O_{ij} \cap U$. This procedure is in agreement
with numbering of the periodic spectra in~(\cite{McT}, Section 1).\\
\begin{equation}\label{E: potential}
[-\frac{{\partial}^2}{\partial x_1^2} + \cos (\frac{x_1}{T_{vw}})]{\sin} 
({\kappa}_{vw}) = {\lambda}_0 {\sin} ({\kappa}_{vw});\\
\end{equation}
\noindent
As it stands, there are two unknown functions: ${\lambda}_0(x_2,...,x_{2k})$,
and $T_{vw}(x_1,...,x_{2k})$. However,
\eqref{E: potential} is informed by the observation made in (\cite{McT},
Amplification 1 of Section 7), according to which
${\nu}_s = {\lambda}_{2s}$ if and only if the potential is an even
periodic function. Thus $\cos (\frac{x_1}{T_{vw}})$ is a correct choice.
We know that potentials of the form $\cos (\frac{x_1}{T_{vw}})$ have purely
simple periodic spectrum for almost all $T$. Hence if ${\kappa} = 
\frac{x_1}{x_1^{+}(x_2, ...,x_{2k}) - x_1^{-}(x_2, ...,x_{2k})}$ , 
our potential must be constant by Background Theorem~\ref{T: Magnus
coexist}. Indeed, a sine polynomial cannot be an eigenfunction
of the standard cosine potential. Thus in that special case, the function 
$T_{vw}=cx_1$. At the other extreme, as $\kappa \rightarrow \frac{\pi}{4}$
with the onset of regularity, $\cos (\frac{x_1}{T_{vw}}) \rightarrow 1$,
and once again we arrive at $T_{vw}=cx_1$ with $c=\frac{1}{2\pi}$. Otherwise, we obtain 
a bona fide trigonometric potential, and $T_{vw}(x_2, ...,x_{2k})$ becomes a  
nonvanishing function. The second equation is given by~\eqref{E: Hill solutions}
modified for $C^{\infty}([x_1^{-}, x_1^{+}] \times (U/x_1))$:\\
\begin{align}\label{E: lowest eigenvalue}
&{\sin}({\kappa}_{vw}(\xi, x_2,...,x_{2k}))
= \frac{\sqrt{2}\cos(\sqrt{{\lambda}_0}\xi)}{2}\\\notag
& + \frac{1}{x_1^+ -x_1^-}\int^{\xi}_{x_1^-} 
\frac{\sin (\sqrt{\lambda}_0(\xi - \eta))}
{\sqrt{{\lambda}_0}}\cos(\frac{\eta}{T_{vw}})
{\sin}({\kappa}_{vw}(\eta, x_2,...,x_{2k})) d\eta. \notag
\end{align}
\noindent
Now~\eqref{E: potential} in conjunction with~\eqref{E: lowest eigenvalue}
determines a potential and the lowest eigenvalue uniquely, though not
explicitly. With the potential comes the entire periodic spectrum:\\
\begin{equation}\label{E: my eigenvalues}
[-\frac{{\partial}^2}{\partial x_1^2} + \cos (\frac{x_1}{T_{vw}})]f_s =
{\lambda}_s f_s;\;\; s \in \mathbb{Z}, s \geqslant 0,\\
\end{equation}
\noindent
such that $f_s(x_1^-, x_2,...,x_{2k}) = f_s(x_1^+, x_2,...,x_{2k})$. Now we
select the potential whose periodic spectrum is located as close to that
of some constant function as possible, i.e. as close to a spectrum
allocated for regular configurations as possible. Those spectra are purely double, 
hence we seek a pair $({\lambda}_0(x_2,...,x_{2k}),\;T_{vw}(x_1,...,x_{2k}))$ that 
solve~\eqref{E: potential},~\eqref{E: lowest eigenvalue} and produce a spectrum
~\eqref{E: my eigenvalues} attaining the following limit: 
\begin{equation}\label{E: optimized potential}
\lim_{N \rightarrow \infty} \liminf_{(S_{\zeta},\;\kappa)} 
(||{\lambda}_{0}^{\text{c}} - 
{\lambda}_{0}^{}||^2_U  +\sum_{s=1}^{N}\frac{1}{2^s} (||{\lambda}_{2s}^{\text{c}} - 
{\lambda}_{2s-1}^{}||^2_U + ||{\lambda}_{2s}^{\text{c}} - {\lambda}_{2s}^{}||^2_U)).
\end{equation}
\noindent
Above, ${\lambda}_{2s}^{\text{c}}$'s stand for the periodic eigenvalues
of the constant function $q(x_1) \equiv 1$. Using~\eqref{E: lambda double},
without loss of generality, we may set
\begin{equation}
{\lambda}_{2s}^{\text{c}} = \frac{s^2 {\pi}^2}{x_1^{+}(x_2, ...,x_{2k}) -
x_1^{-}(x_2, ...,x_{2k})} +1.
\end{equation}
\noindent
Our potential possesses a purely simple spectrum that is as close to a 
purely double spectrum in the $L^2(U)$-norm as the metric data would allow. 
Our objective is to jettison extraneous ${\kappa}(x_1,...,x_{2k})$'s. 
For this reason, we refer to the Hill potentials satisfying~\eqref{E: 
optimized potential} as \textit{optimized potentials}.\\  
\indent
However, the optimized potentials are still not unique, since by Borg's
theorem (\cite{McT}, section 2), only the periodic spectrum, reflecting
spectrum, and the normalizations of eigenfuctions together determine
the potential uniquely. Consequently, we are compelled to consider all
potentials with this spectrum.\\
\indent
At this point we have gathered enough information to introduce our main object:
the isospectral class 'manifold' (almost)
\begin{equation}\label{E: isospectral class}
{\mathfrak{T}}(U) \overset{\textnormal{def}}{=} 
\{q \in C^{\infty}([x_1^{-}, x_1^{+}] \times (U/x_1))|\; 
{\lambda}^q_{2s} = {\lambda}_{2s},\;{\lambda}^q_{2s-1}={\lambda}_{2s-1}\}.
\end{equation}
\noindent
Here ${\lambda}^q_{2s}$, ${\lambda}^q_{2s-1}$ stand for the periodic
eigenvalues of the Hill operator with potential $q$, and
${\lambda}_{2s}$, ${\lambda}_{2s-1}$ are given by~\eqref{E: my eigenvalues}
attaining~\eqref{E: optimized potential}. It is naturally a fibration over
the foliation of codimension $2k-1$ by the integral curves of 
$\frac{\partial}{\partial x_1}$. The $L^2$ norm $||\cdot||_{x_1}$ agrees
with the foliation downstairs. Hence there is a linear subspace
$T{\mathfrak{T}}(U)$. It happens to be closed due to Background 
Theorem~\ref{T: McKean basis}, even though the dimension jumps from 0 at
${\lambda}^q_{2s-1}={\lambda}^q_{2s}$ all the way to $\infty$ at
the leaves with ${\lambda}^q_{2s-1}<{\lambda}^q_{2s}$. Thus it is possible
to introduce bounded linear operators having both the domain and the range
within $T{\mathfrak{T}}(U)$. They just have to slim down to the zero
operator over the leaves with double spectra. All the same, the typical
${\mathfrak{T}}(U)$ does not stray away from uniform infinite-dimensional 
tori. A sufficient condition for uniformity is that there exists at least
one hypersurface $U_{ij}^N$ such that the whole of $U$ can be split into
the backward and forward $x_1$-developments of the locus of 
nondifferentiability via the flow of $\frac{\partial}{\partial x_1}$:
$U = (U_{ij}^N)^{\swarrow} \cup U_{ij}^N \cup (U_{ij}^N)^{\nearrow}$.  
These observations pave the way to extend the action of the 
isometry group to this isospectral class.

\subsubsection{Isometric invariance}

\begin{thm}\label{T: spectrum}
The symmetry group of the nondifferentiable (pseudo-)metric $g$ preserves
the periodic spectrum of the Hill operator with optimized potentials
of the form $\cos (\frac{x_1}{T}).$\footnote{The last qualifying hypothesis is
unnecessary, as the gist of Theorem~\ref{T: spectrum} holds for all optimized potentials, 
but that is all we need in the sequel, and the proof is streamlined as a consequence.}
\end{thm}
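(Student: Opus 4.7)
The plan is to track an isometry $F$ of $(M,g)$ through the entire construction of the optimized potential and verify that every stage is $F$-equivariant, so the resulting periodic spectrum is an isometry invariant.

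First, I would observe that since $F^*g = g$, the map $F$ preserves the Christoffel symbols, Riemann tensor coefficients, and hence the loci $U^N$, $U^O$ by~\eqref{E: Christoffel}. Consequently $F^*$ commutes with $\Delta_g$, carrying every solution of~\eqref{E: Laplace} on $U$ to a solution on $F^{-1}(U)$, and the boundary prescriptions~\eqref{E: boundary condition1},~\eqref{E: boundary condition2} are likewise $F$-equivariant. Because the transversal blow-up parameter $x_1$ is characterized intrinsically by the geometry of $U^N \cup U^O$, the isometry $F$ sends integral curves of $\partial/\partial x_1$ to integral curves of the same vector field, up to an affine reparametrization that leaves the length $x_1^+ - x_1^-$ fixed.

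Next, I would invoke the remark in the preceding smoothing lemma that symmetries may be built into the kernels $S^{vu}_{\zeta_m}$ provided the two selection criteria are respected; in particular, one can choose the kernels so that they commute with $F^*$. The smoothed fields $e^{S^{vu}_{\zeta_m} * w_{vw}}\sin(S^{vu}_{\zeta_m} * \kappa_{vw})$ are then permuted by $F^*$. Hence the correspondence from auxiliary scalar fields to Hill potentials $\cos(x_1/T_{vw})$ defined by~\eqref{E: potential} and~\eqref{E: lowest eigenvalue} is $F^*$-covariant: if $q$ arises from the pair $(\kappa_{vw}, S^{vu}_{\zeta_m})$, then $F^*q$ arises from $(F^*\kappa_{vw}, F^*S^{vu}_{\zeta_m})$ and retains the cosine form, because the evenness criterion $\nu_s = \lambda_{2s}$ (cited from McKean--Trubowitz in the preparation of~\eqref{E: potential}) is intrinsic to the periodic spectrum and thus stable under the $F^*$-induced unitary equivalence of Hill operators.

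The selection functional~\eqref{E: optimized potential} is expressed through the norm $\|\cdot\|_U$ and the reference eigenvalues $\lambda_{2s}^{\mathrm{c}}$ of the constant potential $q \equiv 1$. The former is isometry-invariant because the volume form defining it comes from $g$; the latter depends only on the length of the $x_1$-interval, which $F$ preserves. Therefore the infimum and the attendant set of optimized potentials are $F$-invariant as a set, and since Hill operators with pullback-related potentials are unitarily equivalent, the set $\{\lambda_s\}_{s \geqslant 0}$ is preserved by $F^*$.

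The principal obstacle I anticipate is the non-uniqueness of the optimized potential: $F$ need not fix any particular optimized $q$, only the $F$-orbit within the minimizing set. One must verify that all potentials in such an orbit share the same periodic spectrum, which reduces to the statement that the pullback $F^*$ acts as an $L^2$-isometry on $C^{\infty}([x_1^-, x_1^+] \times (U/x_1))$ and intertwines the corresponding Hill operators, making them isospectral. A minor technical point is that $F$ may reverse the orientation of $x_1$, but reflection $x_1 \mapsto -x_1$ fixes $\cos(x_1/T_{vw})$ and merely permutes the eigenfunctions, so the spectrum is still unchanged.
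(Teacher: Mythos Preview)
Your route is genuinely different from the paper's, and it has a real gap at the final step. You correctly argue that the whole pipeline (Laplace--Beltrami solutions, smoothing, the map to Hill potentials, the functional~\eqref{E: optimized potential}) is $F^*$-equivariant, so the \emph{set} of optimized potentials is stable under $F^*$. But from ``$q$ optimized $\Rightarrow F^*q$ optimized'' and the intertwining relation you only obtain the covariance
\[
\lambda_s[F^*q](x_2,\dots,x_{2k}) \;=\; \lambda_s[q]\bigl(F(x_2,\dots,x_{2k})\bigr),
\]
which is a tautology once $F$ fixes the $x_1$-direction. The theorem asks for the \emph{invariance} $\lambda_s[q]\circ F=\lambda_s[q]$. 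Your proposed reduction (``all potentials in the $F$-orbit share the same periodic spectrum because $F^*$ intertwines the Hill operators'') is circular: intertwining gives exactly the covariance above, not the equality of $\lambda_s[q]$ and $\lambda_s[F^*q]$ as functions on $U/x_1$. To close the gap by your method you would need to prove that the minimizing spectrum in~\eqref{E: optimized potential} is unique, and nothing in the construction supplies that---the feasible set of spectra is not convex, so strict convexity of the $L^2$ distance does not help.

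The paper bypasses this entirely. It works infinitesimally with the Killing generators $K_{ab}$ (which commute with $\partial/\partial x_1$), expands their action on $f_{2s}^2$ in the McKean--Trubowitz basis of Background Theorem~\ref{T: McKean basis}, and shows that any component of $K_{ab}f_{2s}^2$ lying in $T\mathfrak{T}$ (i.e.\ any actual drift of the spectrum) forces $f_{2s}^2$ to satisfy a harmonic-oscillator ODE in $x_1$ whose only periodic solutions are trigonometric monomials; taking square roots then contradicts $f_{2s}$ being a Hill eigenfunction for the cosine potential. So the paper's argument is a contradiction at the level of the tangent/normal splitting of the isospectral torus, not an equivariance argument, and it never needs uniqueness of the optimizer.
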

\begin{proof}
\indent
On  $C^{\infty}(U)$ we define rotation operators $K_{ab}$
stemming from the (smooth) Killing vector fields on $U$. Their respective
flows $\exp(\varepsilon K_{ab})$ preserve the blow-up parameter just as local
Killing vector fields preserve the locus of nondifferentiability. Thus in 
terms of brackets, $[{\frac{\partial}{\partial x_1}}, K_{ab}] =0$. They
act on 
$C^{\infty}([x_1^{-}, x_1^{+}] \times (U/x_1))\subsetneq C^{\infty}(U)$,
and the subspace 
$\{K_{ab}f|f \in C^{\infty}([x_1^{-}, x_1^{+}] \times (U/x_1));\; a \ne b; \;1 < a,b 
\leqslant 2k \}$ is closed in $C^{\infty}([x_1^{-}, x_1^{+}] \times (U/x_1))$ in 
the standard $C^{\infty}-$topology. Via Background Theorem~\ref{T: McKean basis} 
adapted to $L^2([x_1^{-}, x_1^{+}] \times (U/x_1))$, we have for some 
$s \in \mathbb{Z}$ (possibly after a permutation of basis eigenfunctions)
\begin{equation}
[1 + \varepsilon K_{ab} + O({\varepsilon}^2)] f_{2s}^2= h f_{2s}^2 + 
z\frac{\partial}{\partial x_1}f_{2s}^2 + O({\varepsilon}^2);
\end{equation}
\begin{equation}
[1 + \varepsilon K_{ab} + O({\varepsilon}^2)] 
\frac{\partial}{\partial x_1}f_{2s}^2= \hat{h}\frac{\partial}{\partial x_1}f_{2s}^2 + 
\hat{z}f_{2s}^2 + O({\varepsilon}^2).
\end{equation}
\noindent
Here $h, z, \hat{h}, \hat{z}$ are smooth functions of $(x_2, ..., x_{2k})$. If 
$z=\hat{z} \equiv 0$ for all pairs $a \ne b$, we are done as 
$\exp(\varepsilon K_{ab}) (T{\mathfrak{T}}_{\cos (\frac{x_1}{T_{ij}})}) \subset 
T{\mathfrak{T}}_{\cos (\frac{x_1}{T_{ij}})}, \;\; 
\exp(\varepsilon K_{ab}) (N{\mathfrak{T}}_{\cos (\frac{x_1}{T_{ij}}}) \subset 
N{\mathfrak{T}}_{\cos (\frac{x_1}{T_{ij})})}.$ Otherwise, $\varepsilon K_{ab}
\dda f_{2s}^2= \hat{z}f_{2s}^2$ on an 
open subset of $U$. Hence $z \ne 0, \hat{z} \ne 0$ since $K_{ab}$ is 
a rotation. Therefore we obtain\\
\begin{equation}
\hat{z}f_{2s}^2 = \varepsilon K_{ab}\dda f_{2s}^2
= \dda \varepsilon K_{ab}f_{2s}^2 =\dda z \dda f_{2s}^2.
\end{equation}
\noindent
The resulting ODE is\\
\begin{equation}\label{E: harmonic oscillator}
\frac{{\partial}^2}{\partial x_1^2}f_{2s}^2= \frac{\hat{z}}{z}f_{2s}^2.
\end{equation}
\noindent
Now if $\frac{\hat{z}}{z}>0$, the solutions of~\eqref{E: harmonic 
oscillator} would be unbounded, so our only choice is to force
$\frac{\hat{z}}{z}<0$. And this is a regular harmonic oscillator.
Further constrains whittle the coefficients down to\\
\begin{equation}
\sqrt{|\frac{\hat{z}}{z}|} = \frac{2\pi m}{x_1^+ -x_1^-},\;\; m \in 
\mathbb{Z}.
\end{equation} 
\noindent
Thus the only possible solutions of~\eqref{E: harmonic oscillator} are 
linear combinations of
\begin{equation}
f^2_{2s} = \cos (\frac{2\pi m}{x_1^+ -x_1^-}x_1),\;\textnormal{and}
\;\;f^2_{2s} = \sin (\frac{2\pi m}{x_1^+ -x_1^-}x_1).
\end{equation}
\noindent
But on the open subset $\{0 < x_1 < \frac{x_1^+ -x_1^-}{4m}\}$
their square roots\\
\begin{equation}
 \sqrt[4]{1 -{\sin}^2(\frac{2\pi m}{x_1^+ -x_1^-}x_1)},\;\;\;
 \sqrt[4]{1-{\cos}^2 (\frac{2\pi m}{x_1^+ -x_1^-}x_1)},
\end{equation}
\noindent 
are not the eigenfunctions of the original Hill operator. We have reached 
a contradiction assuming that the flow of $K_{ab}$ shifts the periodic
spectrum.
\end{proof}
\noindent
To recapitulate the conclusion of Theorem~\ref{T: spectrum}: given
a potential of the form $\cos(\frac{x_1}{T})$ obtained via the 
Laplace-Beltrami $\longrightarrow$ Nash-Gromov $\longrightarrow$
Hill equation procedure, and further selected via~\eqref{E: optimized
potential}, its simple periodic spectrum is comprised of 
analytic functions of elementary symmetric polynomials in the
$(x_2,...,x_{2k})$ variables.\\
\indent
Having established a general pattern for the action of rotation
operators, we proceed to delineate their modus operandi:
\begin{thm}\label{T: representation}
There exists a faithful representation of $\exp tK_{ab}$ on the 
infinite-dimensional isospectral class $\mathfrak{T}(U)$ by the Hamiltonian 
flows constructed by McKean and Trubowitz.
\end{thm}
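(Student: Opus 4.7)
The plan is to promote $\exp tK_{ab}$ from a symmetry of the base $(U,g)$ to a Hamiltonian flow on the Hill isospectral class $\mathfrak{T}(U)$ by matching its effect on angle-action coordinates with those produced by the $\bigstar(\lambda)$-Hamiltonians of McKean and Trubowitz. First I would observe that by Theorem \ref{T: spectrum} each $\exp tK_{ab}$ leaves the periodic spectrum invariant, so pullback by the Killing flow carries $\mathfrak{T}(U)$ into itself. Because $[\partial/\partial x_1, K_{ab}] = 0$ the action respects the fibration over $U/x_1$, and since the flow is an isometry it preserves the partial inner product $\langle\cdot,\cdot\rangle_{x_1}$ that underwrites the Poisson bracket $\{F,G\} = \int_0^1 (\partial F/\partial q)(d/dx)(\partial G/\partial q)\,dx$; hence the induced flow on $\mathfrak{T}(U)$ is Poisson.

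Next I would parameterize $\mathfrak{T}(U)$ via the Jacobi map attached to the transcendental irrationality $\sqrt{\bigstar^2(\lambda)-4}$ of \eqref{E: trans differentials}. In these coordinates the actions are the simple periodic eigenvalues and the angles lie in $K^* + iH^*[\phi]$; the $\bigstar(\lambda)$-Hamiltonian flows of McKean and Trubowitz act as straight-line motion in the angle directions, and by Background Theorem \ref{T: McKean basis} the family $\{X_{\bigstar(\lambda)}\}_{\lambda}$ spans $T\mathfrak{T}(U)$ fiberwise. Because $K_{ab}$ is Poisson, tangent to $\mathfrak{T}(U)$, and fixes every action (again by Theorem \ref{T: spectrum}), it must act by a constant-coefficient linear motion in the angle coordinates along each Liouville leaf. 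I would then exhibit this motion as a signed measure-valued integral
\begin{equation*}
K_{ab} \;=\; \int X_{\bigstar(\lambda)}\,d\nu_{ab}(\lambda),
\end{equation*}
with $\nu_{ab}$ pinned down uniquely by evaluating both sides against the oblique basis $\{\tfrac{d}{dx}(f_{2i}^o)^2\}$ of $T\mathfrak{T}(U)$.

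The hard part will be verifying that this integral converges to a bounded operator on $T\mathfrak{T}(U)$: the summability required is right at the edge of what the spectral gap decay \eqref{E: lambda double} combined with the smoothing bounds of Definition \ref{D: smoothing estimates} deliver, and it must be controlled uniformly in the transversal parameters $(x_2,\dots,x_{2k})$. A secondary nuisance is behavior over leaves along which some eigenvalues collide, where the tangent fibre slims to a proper closed subspace and one has to check that $\nu_{ab}$ restricts consistently. Once the integral representation is in place, faithfulness is immediate: if $\exp tK_{ab}$ acted trivially on $\mathfrak{T}(U)$ for some $t\ne 0$, then $\nu_{ab}\equiv 0$, forcing $K_{ab}$ to annihilate every tied-spectrum coordinate $\mu_i$; Borg's theorem together with the optimization selection \eqref{E: optimized potential} then forces $K_{ab}$ to be trivial on $(x_2,\dots,x_{2k})$, contradicting the nondegeneracy of the Killing field.
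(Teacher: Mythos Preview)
Your route is genuinely different from the paper's. The paper never invokes the Jacobi map or angle--action coordinates; instead it works directly in the tangent basis of Background Theorem~\ref{T: McKean basis}, writing
\[
K_{ab}q \;=\; \sum_{s\ge 1} u_{2s}(x_2,\dots,x_{2k})\,\frac{\partial}{\partial x_1} f_{2s}^2,
\]
and then attacks convergence of this discrete sum by embedding $U\hookrightarrow\mathbb{R}^{2k}$, passing to a Sobolev space, and using Parseval together with unitarity of the isometry in the frequency domain to bound $\|K_{ab}q\|_U$ by a Sobolev norm of $q$; the Sobolev embedding then yields geometric decay $u_{2s}^2\le c_u 2^{-s}$ for $s$ large. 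The identification with a McKean--Trubowitz flow is made pointwise at the simple eigenvalues via the explicit formula $Xq=\sum_s(-\partial_\lambda\bigstar)(\lambda_{2s})\,q_{2s}\,\partial_{x_1}f_{2s}^2$, solving $q_{2s}=u_{2s}\big((-\partial_\lambda\bigstar)(\lambda_{2s})\big)^{-1}$. Your approach trades this concrete harmonic-analysis estimate for a structural argument (Poisson-preserving plus action-preserving forces linearity in angles), which is cleaner conceptually and gives faithfulness for free via Borg, but pushes all the difficulty into justifying the continuous superposition $\int X_{\bigstar(\lambda)}\,d\nu_{ab}(\lambda)$ in infinite dimensions; the paper sidesteps that by staying discrete. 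One caution: your inference ``Poisson and fixes all actions $\Rightarrow$ constant-coefficient linear in angles'' is automatic on a finite-dimensional symplectic leaf (Poisson vector field $\Rightarrow$ locally Hamiltonian) but requires an argument on the infinite-dimensional torus $\mathfrak{T}(U)$, which is exactly where the paper's Sobolev bound does the work you are deferring.
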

\noindent
\begin{proof}
By Theorem~\ref{T: spectrum}, $K_{ab}q \subset T{\mathfrak{T}}_q(U)$. 
According to Background Theorem~\ref{T: McKean basis},
its tangent space is spanned by $\dda f^2_{2s}$, $s \in \mathbb{Z},\;s \geqslant 1$.
Therefore, we formally equate\\
\begin{equation}\label{E: T-vectors decomposition}
K_{ab}q(x_1,...,x_{2k}) = \sum_s u_{2s}(x_2,...,x_{2k}) \dda f^2_{2s}.
\end{equation}
\noindent
$u_{2s}(x_2,...,x_{2k})\ne 0$ as $s \rightarrow \infty$ generically, and the question of 
convergence ought to be addressed. Even though $\exp tK_{ab}q \in C^{\infty}(U)$, that
fact alone does not guarantee the existence of a decomposition into orthonormal tangent 
vectors. To compound our predicament, $K_{ab}$ is not a self-adjoint operator. To show
convergence, we embed $U \hookrightarrow {\mathbb{R}}^{2k}$ isometrically, and
consider some appropriate Sobolev space of compactly supported functions. Clearly,
the image of $C^{\infty}(U)$ belongs inside. That allows for the Fourier transforms
$F(q)$ to be employed. Then by Parseval's formula, as well as by the basic properties
of differential operators we have (here $\Upsilon(K_{ab})$ denotes a unitary
representation of isometries in the frequency domain realized by Hilbert-Schmidt
operators):\\
\begin{alignat}{2}
||q||_{\textnormal{Sobolev}} &= ||F(q)||_{\textnormal{Sobolev}} \;\;\text{(by Parseval's
formula)}\\ 
=||\Upsilon (K_{ab}) F(q)||_{\textnormal{Sobolev}} &= 
||F(K_{ab}q)||_{\textnormal{Sobolev}}\;\;\text{(by unitarity)}\notag\\
&=||K_{ab}q||_{\textnormal{Sobolev}}\;\;\text{(by Parseval's formula)}.\notag 
\end{alignat}
\noindent
At this point we invoke the Sobolev embedding theorem~(\cite{Heb}, Chapter 2.
Section 2.3) to claim that $||K_{ab}q||_U \leqslant c||K_{ab}q)||_{\textnormal{Sobolev}}$.
The norm on the left is the Hilbert space norm on $U$. That bound implies $tK_{ab}$,
possibly after normalization, is a generator of a one-parameter contraction semigroup 
on $C^{\infty}([x_1^{-}, x_1^{+}] \times (U/x_1))\subsetneq C^{\infty}(U)$, provided
the normalization constant $c_K$ is so chosen as to satisfy 
$\max_{\overline{U/x_1}}||c_K K_{ab}q||_{x_1}
\leqslant ||K_{ab}q||_U$. Then $u^2_{2s} \leqslant c_u 2^{-s}$ for every $s > s^K$
in~\eqref{E: T-vectors decomposition} uniformly in $U$. \\
\indent
Now by (\cite{McT}, Amplification 1 of Section 14), we have
\begin{equation}
Xq = \sum_{s=1} (-\frac{\partial}{\partial \lambda}\bigstar)({\lambda}_{2s})
q_{2s} \dda f^2_{2s},
\end{equation}
\noindent
for all $q \in C^{\infty}([x_1^{-}, x_1^{+}] \times (U/x_1))$. By transitivity
of the flows on $\mathfrak{T}(U)$, every such decomposition is
a Hamiltonian flow, and to produce
the desired representation we just identify $q_{2s}(x_2,...,x_{2k}) =u_{2s}
(x_2,...,x_{2k})(-\frac{\partial}{\partial \lambda}\bigstar)^{-1}({\lambda}_{2s})$.
\end{proof}
\indent
The upshot of the last result is that given a one-parameter subgroup of the isometry
group, denoted by $\exp tK_{ab}$, acting linearly on the coordinates (except $x_1$), 
we have the following diagram (its commutativity is a direct consequence of the 
spectrum being fixed by the limiting process~\eqref{E: optimized potential}):\\
\begin{equation}\label{E: linear action}
\begin{tikzcd}
y_{1,2}(q,{\lambda}_s)  \arrow{r}{\cong} & 
y_{1,2}(\exp tK_{ab}q, {\lambda}_s) \\
q(x_1,...,x_{2k}) \arrow{r} \arrow{u}{\text{Hill operator}}
 & \exp tK_{ab}q(x_1,...,x_{2k}) \arrow{u}{\text{Hill operator}}\\
\end{tikzcd}
\end{equation}
The action on the potentials is (in spite of notation) by Hamiltonian flows.
This paves the way for Cartan's method to go through, specifically, it facilitates
reduction to a unique coframe. We were compelled to introduce $\mathfrak{T}(U)$
as a way to accommodate the regularized metric coefficients, their multitude
stemming from the Laplace-Beltrami equation and Nash-Gromov smoothing. That 
may necessitate a prolongation.\\

\subsubsection{Invariant Riemann period matrix}
Our last step is to repackage the information pertaining to the
Laplace-Beltrami $\longrightarrow$ Nash-Gromov $\longrightarrow$
Hill equation procedure into a more appealing device. As the
periodic spectrum is comprised of analytic functions of 
elementary symmetric polynomials of $(x_2,...,x_{2k})$, one would expect
the same to hold true for the Riemann period matrix. However, the difficulty 
here is to establish preservation of holomorphic structures at each point 
of $U/x_1$. They stem from hyperelliptic Riemann surfaces of infinite 
genus over the (tubular neighborhood of) locus of nondifferentiability, 
possibly comingling with  adjoined complex planes identified along purely
double eigenvalues, corresponding to the constant potential. In general,
the fibration of Riemann surfaces over $U/x_1$, denoted ${\mathfrak{T}}_
{\mathbb{C}}(U)$, would contain those adjoined complex planes:
\begin{equation}
[(\mathbb{C}\backslash \{{\lambda}_i\})\times 
(\mathbb{C}\backslash \{{\lambda}_i\})]/_{\backsim} \subset
{\mathfrak{T}}_{\mathbb{C}}(U).
\end{equation} 
\noindent
The period maps are still well-defined for those planes, with all the
periods shrinking to zero. However the Hilbert spaces of holomorphic forms
get depleted due to the reduction in the number of zeroes available to
holomorphic forms. Nevertheless, we state
\begin{lem}\label{T: period matrix invariance}
The Riemann period matrices of ${\mathfrak{T}}_{\mathbb{C}}(U)$ corresponding 
to the periodic spectra of the Hill potentials~\eqref{E: potential},
~\eqref{E: lowest eigenvalue},
~\eqref{E: optimized potential} are local metric invariants.
\end{lem}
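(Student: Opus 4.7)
The plan is to assemble three ingredients already in place: Theorem~\ref{T: spectrum} (local isometries fix the periodic spectrum), the hyperelliptic presentation of ${\mathfrak{T}}_{\mathbb{C}}$ via the irrationality $\sqrt{{\bigstar}^2(\lambda)-4}$, and Background Theorem~\ref{T: Torelli}. The target is to show that for any local isometry $\phi:(U,g)\to (U',g')$ and any $(x_2,\ldots,x_{2k})\in U/x_1$, the fibrewise Riemann period matrix at $(x_2,\ldots,x_{2k})$ equals that at $\phi(x_2,\ldots,x_{2k})$, with the degenerate fibres $[(\mathbb{C}\backslash\{\lambda_i\})\times(\mathbb{C}\backslash\{\lambda_i\})]/_\sim$ carrying the trivially invariant zero matrix.

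First I would verify that the entire construction ${\Delta}_g \to $ Nash--Gromov smoothing $\to$ optimized Hill potential descends to the quotient by $\phi$. Local isometries commute with ${\Delta}_g$; the radial kernels $S^{vu}_{{\zeta}_m}$ in~\eqref{E: Laplace} are intertwined with $\phi$ because the selection criteria for deep smoothing depend only on the loci $U^N\cup U^O$ and on Hessian nondegeneracy, both of which $\phi$ preserves; and the minimization~\eqref{E: optimized potential} is phrased in $\|\cdot\|_U$, which $\phi$ preserves by definition of isometry. Hence the fibrewise isospectral class $\mathfrak{T}(U)$ is carried to $\mathfrak{T}(U')$, and by Theorem~\ref{T: spectrum} the full simple periodic spectrum $\{{\lambda}_{2s-1},{\lambda}_{2s}\}_{s\geq 0}$ is transported fibre to fibre.

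Next I would promote this to an isomorphism of Hill surfaces. The simple eigenvalues are precisely the branch points of $\sqrt{{\bigstar}^2(\lambda)-4}$, so preservation of the spectrum gives a fibrewise biholomorphism $\Phi:{\mathfrak{T}}_{\mathbb{C}}(U)\to {\mathfrak{T}}_{\mathbb{C}}(U')$ compatible with the hyperelliptic projection $\tau$ in~\eqref{E: aleph}. Choosing the convention that $A_i$ encircles the $i$-th interval of instability $[{\lambda}_{2i-1},{\lambda}_{2i}]$ and that $B_i$ is dual to $A_i$ in the intersection form pins down the canonical homology basis intrinsically, and the normalization $\oint_{A_i}{\beta}_j={\delta}_{ij}$ then pins down the canonical basis of $\textnormal{Hol}({\mathfrak{T}}_{\mathbb{C}})$ uniquely. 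Since $\Phi$ is a biholomorphism respecting this data, $\Phi^*{\beta}'_j={\beta}_j$ and $\Phi_* A_j=A'_j$, $\Phi_* B_j=B'_j$, forcing ${\mathfrak{R}}^{ij}=\oint_{B_i}{\beta}_j=\oint_{B'_i}{\beta}'_j={\mathfrak{R}}'^{ij}$. The degenerate fibres present no difficulty since their period matrices already vanish.

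The main obstacle is that the $B$-periods depend on the holomorphic structure at infinity, not merely on the branch locus, so the jump from preserved spectrum to preserved holomorphic structure needs justification. Here I would invoke Background Theorem~\ref{T: Torelli} in the following form: Trubowitz et al.\ have verified that Hill surfaces satisfy (GH1)--(GH6), so had $\Phi$ failed to intertwine $[{\mathfrak{R}}^{ij}]$ with $[{\mathfrak{R}}'^{ij}]$, two distinct period matrices would produce the same hyperelliptic data, contradicting the uniqueness clause of Torelli. A subsidiary worry is continuity of $(x_2,\ldots,x_{2k})\mapsto[{\mathfrak{R}}^{ij}]$ across the locus where generic fibres meet the adjoined planes; this I would handle by the continuous dependence of the Hill spectrum on the potential in the $C^{\infty}([x_1^{-},x_1^{+}]\times(U/x_1))$-topology established in Theorem~\ref{T: representation}, which forces the corresponding intervals of instability (and hence the $A_i$-cycles and their dual $B_i$-cycles) to shrink to points in a controlled manner, with $[{\mathfrak{R}}^{ij}]\to [0]$ along any sequence approaching a degenerate fibre.
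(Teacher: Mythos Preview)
Your appeal to Torelli runs in the wrong direction. Background Theorem~\ref{T: Torelli} says equal period matrices force a biholomorphism; you need the converse, and that converse is elementary: once a marked surface is fixed, the normalized holomorphic basis $\beta_j$ with $\oint_{A_i}\beta_j=\delta_{ij}$ is unique, so the numbers $\oint_{B_i}\beta_j$ are determined. Since the Hill surface is the Riemann surface of $\sqrt{\bigstar^2(\lambda)-4}$ and $\bigstar$ is an entire function of order $\tfrac12$ recoverable from its roots (the periodic spectrum), Theorem~\ref{T: spectrum} already hands you identical marked surfaces on the nondegenerate fibres, hence identical period matrices. Your worry about ``holomorphic structure at infinity'' dissolves for the same reason: that structure is encoded in the asymptotics of the $\lambda_i$, which are part of the spectrum. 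Drop the Torelli paragraph and your argument for the generic fibres stands.

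The paper takes a different route. Rather than matching branch loci, it stays on a single fibre and uses the Its--Matveev formula~\eqref{E: Its-Matveev} together with the Hamiltonian-flow representation of Theorem~\ref{T: representation} to transport the action of $\exp tK_{ab}$ onto the argument of the theta function; the action is then seen to be realized by Hilbert--Schmidt operators on the Hilbert space of differentials, which reshuffle that space without disturbing the holomorphic structure. This buys an explicit description of how isometries act on $\textnormal{Hol}(\mathfrak{T}_{\mathbb{C}})$, which is reused later; your route is shorter but does not produce that action. For the passage to the degenerate fibres the paper lifts to universal two-fold covers and factors $\aleph(U)$ through $\mathbb{P}^1(\mathbb{C}^{2k-1})\backslash I(U)$ to keep the transition across the genus jump smooth, rather than relying on spectral continuity as you do.
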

\begin{proof}
The action of $\exp t K_{ab}$ on individual infinite-dimensional tori
via the Hamiltonian flows established by Theorem~\ref{T: representation}
trivially extends to the case where the torus collapses, $({\lambda}_{2i}
- {\lambda}_{2i-1}) \longrightarrow 0$, and, as a result,
${\dim}_{\mathbb{R}} T\mathfrak{T}=0$, and the action becomes trivial as
there is literally no direction to flow in, and this `inaction' persists
on $[(\mathbb{C}\backslash \{{\lambda}_i\})\times 
(\mathbb{C}\backslash \{{\lambda}_i\})]/_{\backsim}$.\\
\indent
On an individual proper infinite-dimensional torus, the action extends
to its hyperelliptic Riemann surface via the Its-Matveev formula
~\eqref{E: Its-Matveev}:
\begin{equation}
\exp t K_{ab}q(\xi) = -2 \frac{{\partial}^2}{\partial {\xi}^2}\log \theta
(\exp t K_{ab}{\phi}^* +  \xi (\exp t K_{ab} v_1)),\;\;\;0 \leqslant\xi<1,
\end{equation}
\noindent
where 
\begin{equation}
\exp t K_{ab} v_1 = \sum_i^{\infty} \int^{{\lambda}_{2i}}_{{\lambda}_{2i-1}}
\frac{\exp t K_{ab} \phi}{\sqrt{{\bigstar}^2(\lambda)-4}}d\lambda,
\end{equation}
\noindent
and $\exp t K_{ab} {\phi}^*,\;\; {\phi}^* \in H^*[\phi]$ 
is defined via the duality pairing:
\begin{equation}
\sum^{\infty}\phi({\lambda}_{2i})\exp t K_{ab}m_i{\mathsf{A}}_i\;\; 
\overset{\textnormal{def}}{=}
\;\;\sum^{\infty}m_i{\mathsf{A}}_i(\exp t K_{ab}\phi)({\lambda}_{2i}).
\end{equation}
\noindent
Since the Hilbert space of differentials is complete, $\exp t K_{ab}$,
acting via Hilbert-Schmidt integral operators, just reshuffles them, 
and the underlying holomorphic structure remains intact.\\
\indent
The only property left to establish is that the evolution of holomorphic
structures from hyperelliptic Riemann surfaces $(g=\infty)$ to the conjoined
complex planes $[(\mathbb{C}\backslash \{{\lambda}_i\})\times 
(\mathbb{C}\backslash \{{\lambda}_i\})]/_{\backsim}$ is smooth despite the 
genus jumping from infinity to zero. To overcome the genus problem, we resort 
to working with universal 2-fold covers in place of Riemann surfaces, and 
generalize the map $\aleph$ of~\eqref{E: aleph} to encompass all the Riemann 
surfaces as well as the conjoined complex planes. 
\begin{equation}
\begin{tikzcd}
\widetilde{{\mathfrak{T}}}_{\mathbb{C}}(U)\arrow{r}{\aleph (U)}\arrow{d} &
\mathbb{P} ({\text{Hol}}^* (\widetilde{{\mathfrak{T}}}_{\mathbb{C}}(U)))\\
{\mathbb{P}}^1({\mathbb{C}}^{2k-1})\backslash I(U) \arrow{ur} &
\end{tikzcd}
\end{equation}
\noindent
All elements in $[{\phi}^*]\in \mathbb{P}({\text{Hol}}^*
(\widetilde{{\mathfrak{T}}}))$ share the same zeroes. 
\noindent
The evolution of the covering spaces embedded into 
${\mathbb{P}}^1({\mathbb{C}}^{2k-1})\backslash I(U)$ is smooth, 
and by the universal property, ${\aleph (U)}$ 
factors through. Finally, the quotient map 
$$ {\text{Hol}}^* (\widetilde{{\mathfrak{T}}}_{\mathbb{C}}(U))
\longrightarrow {\text{Hol}}^* ({\mathfrak{T}}_{\mathbb{C}}(U))$$
retains that smoothness. 
\end{proof}
\indent
To summarize, we proved the following equalities:
\begin{alignat}{2}
[[{\mathfrak{R}}^{ij}]_{\beta \gamma}](x_2,...,x_{2k})
=&\exp tK_{ab}[[{\mathfrak{R}}^{ij}]_{\beta \gamma}](x_2,...,x_{2k})\\\notag
=&[[{\mathfrak{R}}^{ij}]_{\beta \gamma}]( \exp tK_{ab}(x_2,...,x_{2k}))\notag.
\end{alignat}

\pagebreak
\subsection{The sheaf $\Xi (U, ds^2)$}
\paragraph{}
Given the following data: an open set $U \subset M$, endowed with
closed loci of nondifferentiability $\{U^N_{ij}\}$, and rank deficiency
$\{U^O_{ij}\}$, a line element
$ds^2$ corresponding to a fixed metric\footnote{We prefer to 
specify the line element as the metric is essentially an equivalence 
class of expressions defined up to a diffeomorphism, whereas the line 
element is less elusive.} satisfying our requirements listed in Section 3.1,
and the attendant Riemann period matrices 
$[[{\mathfrak{R}}^{ij}]_{\beta \gamma}]$, we define $\Xi (U, ds^2) 
\subset \Gamma (TU \oplus T^*U)$ to be the sheaf of smooth $2k$-tuples of 
sections of the standard Courant algebroid that generate that particular 
set of Riemann period matrices on $U$. For instance, for $A_{ij}dx_j - 
B_{ij}{\ddx}_{j+k}$, such that $A_{ij}|_{U^N_{ij}} = 0$, we set 
(with appropriate boundary values)
$$\frac{A_{ij}}{\sqrt{A^2_{ij} + B^2_{ij}}} = \sin {\kappa}_{ij},
\;\;e^{2w_{ij}}= {A^2_{ij} + B^2_{ij}},$$
and use $\sin {\kappa}_{ij}$ to extract a Hill potential.\\
\indent
For every such $k$-tuple, its image
under $({\omega}^{\#}_U - {\pi}^{\#}_U)$ would also be admissible.
Together they comprise a basis of $(TU \oplus T^*U)$. The regular case
is trivially included as the zero Riemann period matrix could be a part 
of the data set and $\sin {\kappa}_{ij} \equiv \frac{\sqrt{2}}{2}$. \\
\indent
All these local elements share the same `maternal' $\mathfrak{T}(U)$. 
On the overlaps, the transition functions give rise to conformal 
transformations $F_{ij}:\;\; \mathfrak{T}(U_i \cap U_j) \longrightarrow
\mathfrak{T}(U_j \cap U_i)$. 
One particular feature of our sheaf deserves mentioning. Namely,
it is not microflexible in the sense of Gromov (\cite{Grom},
Section 2.2). 

\subsection{The algebroid $\mathfrak{Aut}(\Xi (U, ds^2))$}
\paragraph{}
Our sheaf is associated with the fixed line element $ds^2$, described by 
the set of Riemann period matrices $[[{\mathfrak{R}}^{ij}]_{\beta \gamma}]
(x_2,...,x_{2k})$. This set is an invariant of the direct product of
infinite-dimensional tori $\bigotimes_{(i,j)(ds^2)}{\mathfrak{T}}_{ij}(U)$
parameterized by the base variables. 
Hence the symmetries there descend onto $\Xi (U, ds^2)$. Precisely,\\
$$\mathfrak{Aut}(\Xi (U, ds^2))= 
\bigotimes_{(i,j)(ds^2)}{\mathbf{j}}_{ij}(t_{ab}K_{ab})$$
\noindent
can be delineated as follows: for each family of eigenfunctions obtained
via the commutative diagram~\eqref{E: linear action}, we compute its 
$t_{ab}$-dependent Hamiltonian vector field 
$H_{\exp t_{ab}K_{ab}(e^{w_{ij}}\cos {\kappa}_{ij})}$ 
encoding  auxilliary scalar fields in congruence with
the original action by Hamiltonian flows $\cos (\exp t_{ab}K_{ab}
{\kappa}_{ij})$ of McKean and Trubowitz. This congruence is based on
the Laplace-Beltrami equation:
\begin{equation} 
{{\Delta}_g}(e^{\exp t_{ab}K_{ab}w_{uv}}_U
\sin({\exp t_{ab}K_{ab}\kappa}_{uv})_U)^{10}=0,\;\;\forall t_{ab}.
\end{equation} 
\noindent
Then the vector field ${\mathbf{j}}_{ij}(t_{ab}K_{ab})$ solves
\begin{equation}
[{\mathbf{j}}_{ij}(t_{ab}K_{ab}), H_{e^{w_{ij}}\cos {\kappa}_{ij}}] = 
\frac{\partial} {\partial t_{ab}} H_{\exp t_{ab}K_{ab}(e^{w_{ij}}
\cos {\kappa}_{ij})}.
\end{equation} 
\noindent
Thus $\bigotimes_{(i,j)(ds^2)}{\mathbf{j}}_{ij}(t_{ab}K_{ab})$ is a local
subalgebra of $\Gamma ({\mathbb{R}}^{\dim \{K_{ab}\}} \times TM)$.
Its action on ${\Psi}_i = \sum_{j=1} (e^{w_{ij}}
\cos{{\kappa}}_{ij}dx_j + e^{w_{ij}}
\sin{{\kappa}}_{ij}dx_{2k+j}),\;\;{\Psi}_i \in \Xi (U, ds^2)$ is
\begin{align}
&\bigotimes_{(i,j)(ds^2)}{\mathbf{j}}_{ij}(t_{ab}K_{ab}){\Psi}_i \\\notag 
=&\sum_{j=1}{\mathbf{j}}_{ij}(t_{ab}K_{ab}) (e^{w_{ij}}
\cos{{\kappa}}_{ij}dx_j + e^{w_{ij}}\sin{{\kappa}}_{ij}dx_{2k+j})\\\notag
=& \sum_{j=1}\exp t_{ab}K_{ab} (e^{w_{ij}}\cos{\kappa}_{ij})dx_j + 
\exp t_{ab}K_{ab}(e^{w_{ij}}\sin{\kappa}_{ij})dx_{2k+j}.\notag
\end{align}
\indent
We have traded nondifferentiability/rank deficiency of the metric
for families of solutions. That is the long and short of our
regularization procedure. Their totality has now to be preserved in order 
for us to be able to reverse-engineer the original metric.\\

\section{Equivalence}
\paragraph{}
From this section on, we relabel independent variables on $T_UU \subset
TT^*M$ to simplify notation. Set $ {\dot{x}}_j =x_{j+2k}, j \leqslant 2k$.
The canonical symplectic form is written as $\Omega = \sum_j dx_j \wedge dx_{j+2k}$.

\subsection{Lifting of preframes to $T^*TM$}
\paragraph{}
Now is the moment to take full advantage of the symplectic connection
introduced in subsection 2.1.3. Given a preframe and its complement
under  $({\omega}^{\#}_U - {\pi}^{\#}_U)$, we produce a smooth frame
on $T_UU \subset TT^*M$ via $\ccc$. To produce an equivalent coframe,
we invoke the symplectomorphism of Abraham and Marsden (\cite{AM}, 
chapter 2):
$$\boldsymbol{\alpha}: TT^*M \longrightarrow T^*TM.$$ 
\noindent
From now on, by a slight misnomer, we will tacitly 
assume $T_UU \subset T^*TM$.\\
\indent
Having set up the coframes, we state our objective: to formulate 
the necessary and sufficient conditions for the existence of local
diffeomorphisms (elements of the transitive pseudogroup of 
diffeomorphisms  $\text{Diff} \;T_UU \subset \text{Diff}\; T^*M$) 
that preserve the Riemann period matrices derived from the given 
(nondifferentiable) coefficients of the metric tensor on $M$.\\
\indent
The equivalence problem we are about to pose has to encompass both
the regular metric case~\eqref{E: regular preframe}, and the 
nondifferentiable case~\eqref{E: singular preframe}. Thus, the 
structure group has to be a closed subgroup of 
$SU(2k) = Sp(2k) \cap SO(4k)$ in keeping with the lift provided 
by the symplectic connection. The freedom afforded by the choice 
between the preframe and its complement (and, presumably, other
$2k$-tuples in between) has to be reflected in that subgroup.
However, further reduction is necessary
as all the off-diagonal metric coefficients are in place to manifest
various rank deficiencies, and, consequently, rotations among
the base variables $(x_i, x_j,\;\;i,j \leqslant 2k)$ are out.
What is left are the rotations of symplectic conjugate pairs
$(x_i, x_{i+2k},\;\;i\leqslant 2k)$. Thus, necessarily, the 
Jacobians of our diffeomorphisms ought to belong to
\begin{equation}\label{E: structure group} 
\Big(\bigoplus_{i=1}^{2k} SO_i(2, \mathbb{R})\Big) \times T_UU.
\end{equation}
\indent
Heuristically speaking, this is the smallest subgroup
of $\text{Diff}(T_UU) \subset \text{Diff}(T^*M)$ containing
the full set of local preternatural rotations.

\subsection{Prolongation}
\paragraph{} 
Having delineated the structure group, we tackle the inherent
nonuniqueness of the regularized preframes. Evidently, 
\eqref{E: structure group} is not spacious enough to furnish a 
faithful representation of the scalar action of 
$\mathfrak{Aut}(\Phi (U, ds^2))$. To carve out such a 
representation, we identify the fiber variables on
$TG \cong \mathfrak{g}$ with the fiber variables on $T_UU$. 
Consequently, our construction can only be realized for subgroups
of the orthogonal group $G \subset SO(2k)$, such that 
${\dim}_{\mathbb{R}} \;\mathfrak{g} \leqslant 2k -1$. One fiber
dimension (locally parameterized by $x_{2k+1}$) has already been
utilized as symplectic conjugate of the singularity parameter.
The most interesting case of $M = {\mathbb{R}}^{3+1}$ endowed
with the reduced rotational group, including the Kerr spacetime
and a large class of gravitational collapse scenarios, is,
fortuitously, covered.\\
\indent
We identify \begin{align}
\exp t_{ab}K_{ab}(e^{w_{ij}}&\cos {\kappa}_{ij}) \mapsto  
\exp x_{2k+1+\sigma (a,b)}K_{ab}(e^{w_{ij}}\cos{\kappa}_{ij})\\\notag
\overset{\textnormal{def}}{=} &e^{{\widehat{w}}_{ij}(x_1,,,,,x_{2k}, 
x_{2k+2},...,x_{4k})} 
\cos\widehat{{\kappa}}_{ij}(x_1,,,,,x_{2k}, x_{2k+2},...,x_{4k}), 
\end{align} 
\noindent
where the basis of the Lie algebra, consisting of anti-Hermitian 
matrices, is labeled via $1 \leqslant a < b$, and $\sigma$ is a 
bijection from the set of pairs of indices $(a,b)$ into natural
numbers, $\sigma (a,b) \leqslant 2k-1$. With the aid of
Lemma~\ref{T: period matrix invariance}, we ensure that the prolonged
Hill eigenfunctions $\cos \widehat{{\kappa}}_{ij}(x_1,,,,,x_{2k},
x_{2k+1},...,x_{4k})$ generate the same Riemann period matrices.
This ansatz is nothing more than an application of the third
fundamental theorem of Lee. Compactness of $G$ ensures locality,
i. e. $-c \leqslant x_{2k+1+\sigma (a,b)} \leqslant c$ covers
all possible eigenfunctions of the Hill operator with optimized
potentials.\\
\indent
For each $(2k-1)$-tuple $(x_{2k+2},...,x_{4k}) \in T_UU$, we let
$ \widehat{w}_{ij}(x_1,,,,,x_{2k}, x_{2k+2},...,x_{4k})$ be 
the corresponding weight, stemming from the solutions 
of~\eqref{E: Laplace}, subsequently smoothed. 

\subsection{Free scalar fields}
\paragraph{}
With the advent of prolonged auxiliary fields, we now need to make 
certain they are amenable to the method of equivalence's requirements 
listed in the hypothesis of Background 
Theorem~\ref{T: e-structure equivalence}. 
More specifically, their partial derivatives (those relevant for 
geometric features, curvature above all) ought to remain independent 
throughout $T_UU$ in order for the desired $e$-structures to exist.\\
\indent
The fields with independent partial derivatives will be found via
$h$-principle for one partial differential relation, known as the
\textit{freedom} relation (see~\cite{Grom}, Section 1.1.4).
Here is a more or less detailed outline.\\
\indent
Consider a smooth fibration $p: X \longrightarrow V$, and let
$J^rX$ be the bundle of $r$-jets (of germs) of smooth sections
$f: V \longrightarrow X$. For instance, $J^1X$ consists of linear
maps $L: T_v(V) \longrightarrow T_x(X)$ for all $x \in X$ and
$v =p(x) \in V$ such that $D_p \circ L = \textnormal{Id}: T_v(V) 
\longrightarrow T_v(V)$. Here $D_p: T(X) \longrightarrow T(V)$ is the
induced map of tangent bundles.\\
\indent
Given a $C^2$-map $f: V \longrightarrow {\mathbb{R}}^c$ defined locally
in coordinates $x= (x_1,...,x_m)$ in the vicinity of $x \in V$, we 
denote by $T^2_f(V,x) \subset T_y ({\mathbb{R}}^c) = {\mathbb{R}}^c,\;\;
y=f(x)$  the subspace spanned by the vectors
$$ \frac{\partial f}{\partial x_i}(x),\;\;\; \frac{{\partial}^2 f}
{\partial x_i \partial x_j}(x), \;\;\; 1 \leqslant i,j \leqslant m.$$
\noindent
This subspace is called the (second) \textit{osculating} space of $f$.
The first osculating space is just $D_f(T_x(V))$. The dimension of
$T^2_f(V,x)$ can vary between 0 and $\frac{1}{2}m(m+3)$. $f$ is 
called \textit{free} if $\dim T^2_f(V,x) = \frac{1}{2}m(m+3),\;\;\;
\forall x \in V$.
\indent
The freedom relation $\wp \subset J^2X$ for $X=V\times{\mathbb{R}}^c
\longrightarrow V$ could either be an empty set if $c <\frac{1}{2}m(m+3)$,
or an open dense subset in $J^2X$ invariant under the 
natural action of diffeomorphisms of $V$ and affine transformations of
${\mathbb{R}}^c$. Furthermore, the  action  of $\text{Diff}(V) \times
\text{Aff}({\mathbb{R}}^c)$ is transitive on $\wp$. In fact, $\wp$ is
the only proper open subset enjoying these properties. \\
\indent
The simplest example of a free map $f: {\mathbb{R}}^m \longrightarrow
{\mathbb{R}}^c$ is given by the $c$ monomials $x_i, x_ix_j$ on 
${\mathbb{R}}^m$, where $1 \leqslant i \leqslant j \leqslant m$.
For closed manifolds, free maps are rare. The only known natural
ones involve embeddings/immersions of spheres and real projective 
spaces into ${\mathbb{R}}^c$.\\
\indent
The homotopy principle (in the context of free maps) asseverates 
the existence of a smooth homotopy between an arbitrary twice
differentiable one and a free one provided some additional conditions
are met. For open manifolds, serendipiously, $f: V \longrightarrow 
{\mathbb{R}}^c$ can be homotoped to a free map at the critical
dimension $c =\frac{1}{2}m(m+3)$ (see~\cite{Grom}, Section 2.2.2 for
the proof). These results have been generalized to higher order 
derivatives, as well as to covariant derivatives, but for our
purposes, this will suffice.\\
\indent
We regard ${\widehat{w}}_{vu}(x_1,..,x_{2k}, x_{2k+2},...,x_{4k})$,
$\widehat{{\kappa}}_{vu}(x_1,...,x_{2k},x_{2k+2},...,x_{4k})$ as maps 
${\widehat{w}}_{vu},\;\widehat{{\kappa}}_{vu}: T_UU \longrightarrow
{\mathbb{R}}^{c_{vu}}$, $c_{vu}= \frac{1}{2}(4k-1- \#(l_0))(4k +2 - 
\#(l_0))$, where $\#(l_0)$ denotes an enumeration of base variables
informed by the number of identically vanishing Christoffel symbols
and Riemann curvature tensor coefficients in~\eqref{E: Christoffel},  
and their components being the partial derivatives varying over 
$T_UU$. To be punctilious, their range is a subbundle of the 
second osculating plane bundle, but since $T_UU$ is contractible,
we might as well take the aforementioned simplistic view.  
Our objective is to effect a smooth homotopy of (partial 
derivatives of) ${\widehat{w}}^t_{vu}$, $\widehat{{\kappa}}^t_{vu},\;\;
t \in [0,1]$ satisfying
\begin{equation}
{\widehat{w}}^0_{vu} ={\widehat{w}}_{vu},\;\;\widehat{{\kappa}}^0_{vu}
=\widehat{{\kappa}}_{vu}, \;\;{\widehat{w}}^1_{vu}, 
\widehat{{\kappa}}^1_{vu} \in {\wp}_{vu}. 
\end{equation}
\indent
Owing to 
$\wp$ being dense in $J^2T_UU$, we can find free ${\widehat{w}}^1_{vu},
\;\widehat{{\kappa}}^1_{vu}$ arbitrarily close to the original ones.
In the sequel we use the same notation for the free weights and kappas. 
With such functions, the (generic) auxiliary scalar fields 
$e^{{\widehat{w}}_{vu}}\cos\widehat{{\kappa}}_{vu}$ referenced in 
the title of this section are free.
\subsection{Main theorem}
\paragraph{}
All the seemingly disconnected pieces from the previous sections
come together. So far we have accumulated the following data related
to the original (pseudo-)\\
metric $g$: smooth weights $\{e^{\widehat{w}}_{vu}(x_1,..,x_{2k}, 
x_{2k+2},...,x_{4k})\}$, the prolonged Hill operator 
eigenfunctions $\{\pm\sin(\widehat{{\kappa}}_{vu}(x_1,...,x_{2k}))\}$,  
$\{\pm\cos (\widehat{{\kappa}}_{vu}(x_1,...,x_{2k}))\}$ corresponding 
to the loci of nondifferentiability and rank deficiency $U^N \cup U^O$,
their attendant Riemann period matrices 
$[[{\mathfrak{R}}^{vu}]_{\beta \gamma}]$.
\noindent
Generically, $\det [e^{{w_{vu}(x_1,..,x_{2k})}}] > 0$, as rank deficiencies
are now carried by the Hill operator eigenfunctions.\\
\indent
Now our regularized $4k$-coframe $\Psi$ is comprised
of the image of our preframe under $\boldsymbol{\alpha} \circ \ccc$ joined
with its complement. Below we make use of the symmetries involved 
($\widehat{{\kappa}}_{ij}= \widehat{{\kappa}}_{ji}$, 
${\widehat{w}}_{ij}= {\widehat{w}}_{ji}$, 
$\widehat{{\kappa}}_{i+2kj}=\widehat{{\kappa}}_{ij}$, 
${\widehat{w}}_{i+2kj}={\widehat{w}}_{ij}$).
\begin{equation}\label{E:  preframe}
\begin{cases}
{\Psi}_1 = \sum_{j=1}^{k} (e^{{\widehat{w}}_{1j}}\cos\widehat{{\kappa}}_{1j}dx_j + 
e^{{\widehat{w}}_{1j}}\sin\widehat{{\kappa}}_{1j}dx_{2k+j});\\
.........................................................................\\
{\Psi}_k = \sum_{j=1}^{k} (e^{{\widehat{w}}_{kj}}\cos\widehat{{\kappa}}_{kj}dx_j + 
e^{{\widehat{w}}_{kj}}\sin\widehat{{\kappa}}_{kj}dx_{2k+j});\\
{\Psi}_{k+1} = \sum_{j=1}^{k} (e^{{\widehat{w}}_{k+1j}}\sin\widehat{{\kappa}}_{k+1j}dx_j -
e^{{\widehat{w}}_{k+1j}}\cos\widehat{{\kappa}}_{k+1j}dx_{2k +j});\\
.........................................................................\\
{\Psi}_{2k} = \sum_{j=1}^{k} (e^{{\widehat{w}}_{2kj}}\sin\widehat{{\kappa}}_{2kj}dx_j -
e^{{\widehat{w}}_{2kj}}\cos\widehat{{\kappa}}_{2kj}dx_{2k +j}).
\end{cases}
\end{equation} 
\noindent
To write down the image of the complementary preframe, simply replace
$\sin\widehat{{\kappa}}_{ij} \mapsto \cos\widehat{{\kappa}}_{ij}$,
$\cos\widehat{{\kappa}}_{ij} \mapsto -\sin\widehat{{\kappa}}_{ij}$
in the coframe components advanced by $2k$.

\begin{thm}\label{T: main}
Given sets $U$ and $V$ on symplectic  (pseudo-)Riemannian manifolds 
$(M,g, \omega)$, and $(N,g, \omega)$, there exists an isometry 
${\AA}_{UV}: T_UU \to T_VV$, ${\AA}_{UV}(x_1)= y_1$, if and only if 
${\digamma}_2(\Psi)_U$, ${\digamma}_2(\psi)_V$ satisfy the hypothesis 
of Background Theorem~\ref{T: e-structure equivalence}. 
\end{thm}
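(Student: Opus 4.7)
The plan is to apply \'{E}lie Cartan's equivalence method to the $4k$-coframes $\Psi$ and $\psi$ of~\eqref{E:  preframe} on $T_UU$ and $T_VV$, with maximal structure group $\bigoplus_{i=1}^{2k} SO_i(2,\mathbb{R})$ (the unique subgroup compatible with both the symplectic lift and the preternatural rotations), reducing it iteratively by first-order normalization until the group becomes trivial and Background Theorem~\ref{T: e-structure equivalence} applies directly.

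For the forward implication, suppose $\AA_{UV}:T_UU\to T_VV$ is an isometry with $\AA_{UV}(x_1)=y_1$.  As an isometry it preserves Christoffel symbols and Riemann tensor coefficients, hence the loci $U^N$ and $U^O$ of nondifferentiability and rank deficiency, and therefore the standard normal form factorization~\eqref{E: standard form} of the metric data. Threading through the Laplace-Beltrami $\to$ Nash-Gromov $\to$ Hill chain of Section~3, $\AA_{UV}$ sends solutions of~\eqref{E: Laplace} to solutions, preserves the optimized potentials characterized by~\eqref{E: optimized potential} with their periodic spectra, and by Lemma~\ref{T: period matrix invariance} preserves the Riemann period matrices $[[\mathfrak{R}^{vu}]_{\beta\gamma}]$. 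The canonical nature of the symplectic connection $\ccc$ (Theorem~\ref{T: C-morphism}) and the Abraham-Marsden symplectomorphism $\boldsymbol{\alpha}$ ensures that $\AA_{UV}$ lifts to an identification of $T^*TU$ with $T^*TV$ intertwining $\Psi$ and $\psi$ up to the action of the structure group. Consequently the torsion coefficients $\gamma^i_{jm}$ and their iterated covariant derivatives correspond under this lift, which is precisely the condition ${\digamma}_2(\Psi)_U\circ\sigma={\digamma}_2(\psi)_V$ of Background Theorem~\ref{T: e-structure equivalence}.

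For the converse, write out the Maurer-Cartan equations~\eqref{E: Cartan1}--\eqref{E: Cartan2} for $\Psi$. The freeness of the prolonged scalar fields $\widehat{w}_{vu}$, $\widehat{\kappa}_{vu}$, obtained via the homotopy principle applied to the freedom relation in Section~4.3, ensures that sufficiently many second partial derivatives appear independently inside the torsion coefficients; thus the intrinsic torsion orbit under~\eqref{E: torsion group action} is generic and the problem is of first-order constant type. Each iteration of first-order normalization pins down one abelian factor of $\bigoplus_{i=1}^{2k} SO_i(2,\mathbb{R})$ via a nonvanishing off-diagonal torsion component built from distinct $\sin\widehat{\kappa}_{ij}\wedge\cos\widehat{\kappa}_{ij}$ wedge products; since the group is $2k$-dimensional, at most $2k$ iterations produce a genuine $e$-structure. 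Background Theorem~\ref{T: e-structure equivalence}, combined with matching ${\digamma}_2$ data, then yields a local diffeomorphism $\AA_{UV}:T_UU\to T_VV$ with $\AA_{UV}^*\psi=\Psi$.

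The main obstacle lies in verifying that this coframe isomorphism is an honest isometry of the underlying pseudometrics rather than merely a diffeomorphism of cotangent bundle manifolds, together with the normalization $\AA_{UV}(x_1)=y_1$. Because each $\Psi_i$ is assembled from the pairs $(e^{\widehat{w}_{ij}}\cos\widehat{\kappa}_{ij},\, e^{\widehat{w}_{ij}}\sin\widehat{\kappa}_{ij})$, the condition $\AA_{UV}^*\psi=\Psi$ fixes each pair $(\widehat{w}_{ij},\widehat{\kappa}_{ij})$; inverting the normalization~\eqref{E: standard form} through the sinc conventions~\eqref{E: sinc 1}--\eqref{E: sinc 2} and evaluating the prolongation parameters $(x_{2k+2},\ldots,x_{4k})$ at the identity of the structure group recovers each $g^{ij}$, hence the metric itself. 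The equality $\AA_{UV}(x_1)=y_1$ is then automatic, since $x_1$ is intrinsically singled out among all base coordinates as the unique direction transversal to $\{\cos\widehat{\kappa}_{ij}=\pm 1\}$ across every component of $U^N\cup U^O$, a property visibly preserved by any $e$-structure isomorphism.
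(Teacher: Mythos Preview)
Your overall strategy---lift to $T^*TM$, write the Maurer--Cartan structure equations for the $4k$-coframe with structure group $\bigoplus_{i=1}^{2k}SO_i(2,\mathbb{R})$, reduce to an $e$-structure, and invoke Background Theorem~\ref{T: e-structure equivalence}---is the same as the paper's. The forward implication and the closing discussion of why $\AA_{UV}$ is an isometry with $\AA_{UV}(x_1)=y_1$ are reasonable glosses that the paper leaves largely implicit.

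The substantive discrepancy is in how the reduction actually goes through. Your description of ``each iteration of first-order normalization pins down one abelian factor via a nonvanishing off-diagonal torsion component built from distinct $\sin\widehat{\kappa}_{ij}\wedge\cos\widehat{\kappa}_{ij}$ wedge products'' is not the mechanism. The paper computes $d\Psi_i$ and $d\Psi_{i+2k}$ explicitly, absorbs into $\beta_i$ only the $2k$ monomials $\Psi_i\wedge\Psi_{i+2k}$ involving symplectic-conjugate pairs, and then proves (Lemma~4.1) that the infinitesimal action of the structure group on the remaining intrinsic torsion coefficients $\gamma^i_{ms}$ consists \emph{only of translations}---the rotation and rescaling terms in~\eqref{E: torsion group action} vanish because each $SO_i(2,\mathbb{R})$ has $a^m_{ml}=a^s_{sl}=0$ on its anti-Hermitian generator. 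This is what permits normalizing all nonvanishing $\gamma^i_{jm}$ to $1$ in a single step, forcing $\beta_s\equiv 0\bmod\Psi$ and collapsing the group to $\{e\}$ at once rather than over $2k$ iterations. The companion Lemma~4.2 (single orbit, constant rank) is where freeness of $\widehat{w}_{ij},\widehat{\kappa}_{ij}$ is actually used: it guarantees that the $\gamma^i_{ms}$ in~\eqref{E: expression for gamma} are nonvanishing on $T_UU$ (since $\sin\widehat{\kappa}_{ij}$ vanishes exactly on $U^N\cup U^O$ and the compound matrix $[b^{u\wedge v}_{i\wedge j}]$ is invertible), and that the covariant-derivative layer $\gamma^i_{jm|l_1}$ contributes nothing new to the rank. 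Your appeal to freeness is in the right place, but the conclusion you draw from it---a generic orbit enabling stepwise reduction---is not what the computation delivers; the point is rather that the action is affine with trivial linear part, so normalization is immediate.
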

\begin{corollary}\label{T: corollary}
${\AA}^*([[{\mathfrak{R}}^{vu}]_{\beta \gamma}]_U)  =
[[{\mathfrak{R}}^{vu}]_{\beta \gamma}]_V$ as lexicographically 
ordered sets.
\end{corollary}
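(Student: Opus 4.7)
The plan is to trace the construction of $[[{\mathfrak{R}}^{vu}]_{\beta \gamma}]$ from the coframe $\Psi$ and invoke functoriality at each step, rather than re-proving the underlying equivalence. By Theorem~\ref{T: main}, the isometry ${\AA}_{UV}$ satisfies ${\AA}^*_{UV}\psi = \Psi$ modulo the action of the structure group $\bigoplus_{i=1}^{2k}SO_i(2, \mathbb{R})$ acting on $T^*T_VV$. Reading off the components displayed in \eqref{E:  preframe} component-by-component, this pullback identifies, for each ordered pair of metric indices $(v,u)$, the free scalar fields ${\widehat{w}}_{vu}$ and $\widehat{{\kappa}}_{vu}$ on $T_UU$ with their counterparts on $T_VV$ up to two residual ambiguities: the preternatural rotations with fixed parameter $\pi/2$ (which merely exchange a preframe with its $({\omega}^{\#}-{\pi}^{\#})$-complement), and the prolonged action of $\mathfrak{Aut}(\Xi(U, ds^2))$ realized along the fiber coordinates $x_{2k+1+\sigma(a,b)}$.

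With that identification in hand, the argument is to reverse-engineer the regularization chain on both sides and show that each arrow is ${\AA}_{UV}$-equivariant. Restricting $\widehat{{\kappa}}_{vu}$ to the slice on which every prolongation fiber coordinate $x_{2k+1+\sigma(a,b)}$ vanishes retrieves a representative $\kappa_{vu}(x_1,\ldots,x_{2k})$, from which \eqref{E: potential}--\eqref{E: lowest eigenvalue} together with the selection criterion \eqref{E: optimized potential} single out a Hill potential and its periodic spectrum. Diffeomorphism equivariance of the Laplace--Beltrami operator on $(M,g)$, locality of the Nash-Gromov smoothing (Definition~\ref{D: locality}), and the isometric invariance of the periodic spectrum under Killing flows (Theorem~\ref{T: spectrum}) together force every step of this procedure to intertwine with ${\AA}^*_{UV}$. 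Consequently the entire fibered hyperelliptic Riemann surface ${\mathfrak{T}}_{\mathbb{C}}(V)$ is pulled back to ${\mathfrak{T}}_{\mathbb{C}}(U)$, together with its canonical homology basis.

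The main obstacle is the residual ambiguity from $\mathfrak{Aut}(\Xi(U, ds^2))$: since several free representatives $\widehat{{\kappa}}_{vu}$ lift the same underlying metric datum, one must check that the period matrix is constant along each orbit of this action, not merely along one chosen slice. This is precisely the content of Lemma~\ref{T: period matrix invariance}. On leaves where the torus $\mathfrak{T}(U)$ has collapsed $({\lambda}_{2i}={\lambda}_{2i-1})$, including the adjoined complex planes $[(\mathbb{C}\backslash\{{\lambda}_i\}) \times (\mathbb{C}\backslash\{{\lambda}_i\})]/_{\sim}$, the action is trivial by dimension count. On uncollapsed leaves the Hamiltonian flows ${\mathbf{j}}_{ij}(t_{ab}K_{ab})$ act through Hilbert--Schmidt operators on $\textnormal{Hol}({\mathfrak{T}}_{\mathbb{C}})$ that preserve the $A$-normalization $\oint_{A_i}\beta_j = \delta_{ij}$, so each $B$-period ${\mathfrak{R}}^{ij} = \oint_{B_i}\beta_j$ is fixed. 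Smoothness of the evolution across the genus jump is handled by the universal-cover diagram in the proof of Lemma~\ref{T: period matrix invariance}.

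It remains to match the ordering. The indexing $(v,u)$ on $[[{\mathfrak{R}}^{vu}]_{\beta\gamma}]$ is inherited from the lexicographic order of coframe components in \eqref{E:  preframe}, which ${\AA}^*_{UV}$ preserves by construction; the internal $(\beta,\gamma)$-labeling is inherited from the canonical homology basis $A_1, B_1, A_2, B_2, \ldots$ ordered by the increasing periodic eigenvalues \eqref{E: lambda double}, which are themselves ${\AA}$-invariant by Theorem~\ref{T: spectrum}. Background Theorem~\ref{T: Torelli} then supplies a biholomorphism $BH$ with $BH_*(A_j) = \epsilon A'_j$, $BH_*(B_j) = \epsilon B'_j$; the sign $\epsilon \in \{\pm 1\}$ is pinned down to $+1$ by compatibility of the fixed orientations induced from $\omega$ on $M$ and $\Omega$ on $T^*M$ under ${\AA}_{UV}$. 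Combining, ${\AA}^*([[{\mathfrak{R}}^{vu}]_{\beta\gamma}]_V) = [[{\mathfrak{R}}^{vu}]_{\beta\gamma}]_U$ as lexicographically ordered sets, as claimed.
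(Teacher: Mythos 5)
The paper's proof of this corollary is a proof by contradiction: assume ${\AA}^*({\mathfrak{T}}_{\mathbb{C}}(x_2,\ldots)) \ncong {\mathfrak{T}}_{\mathbb{C}}(y_2,\ldots)$, use Torelli (Background Theorem~\ref{T: Torelli}) in its contrapositive form to conclude the period matrices differ, use the Its--Matveev formula~\eqref{E: Its-Matveev} to locate the discrepancy in a concrete holomorphic differential $d\Phi$, truncate to a compact surface ${\mathfrak{T}}_{\mathbb{C}}^m$ where the matching lacunae already account for the discrepancy at cycles $i \leqslant m$, apply Gram--Schmidt to isolate a form $d\Phi^\perp$ with vanishing first $m$ $A$-periods, and then let $m \to \infty$ to kill $d\Phi^\perp$ via (\cite{FKT}, Chapter 1, Theorem 1.17) --- a contradiction. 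The crucial input from the Main Theorem is only that the loci, periodic spectra, lacunae, and norming constants agree on $U$ and $V$; nothing is claimed about matching of the actual auxiliary fields. Your proposal attempts a direct route: argue that the entire regularization chain Laplace--Beltrami $\rightarrow$ Nash--Gromov $\rightarrow$ Hill intertwines with ${\AA}^*_{UV}$, so the fibered Riemann surface together with its canonical homology basis is literally pulled back.

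There is a genuine gap in the direct route. The equivariance claim for the Nash--Gromov step is not justified. The deep smoothing operators $S^{\text{deep}}$ of Section 3.2 are convolutions against Euclidean kernels taken \emph{after} an isometric embedding $U \hookrightarrow {\mathbb{R}}^{2k}$; locality (Definition~\ref{D: locality}) says only that $(S_i f)(x)$ depends on $f$ near $x$, not that $S_i$ commutes with pullback by a diffeomorphism. Since $U$ and $V$ get embedded separately and the kernels are chosen per-coefficient, there is no reason $S^{vu}_{\zeta_m} \ast ({\AA}^*\kappa_{vu})$ should equal ${\AA}^*(S^{vu}_{\zeta_m} \ast \kappa_{vu})$. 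The same issue arises for the nonunique Laplace--Beltrami solutions and the liminf in the optimization~\eqref{E: optimized potential}: equivariance of the \emph{set} of admissible choices does not give equivariance of the \emph{particular} choice. The paper's indirect argument is precisely calibrated to avoid this: it only needs the spectral invariants from Theorem~\ref{T: spectrum} and the Main Theorem, not field-level equivariance, and it closes the remaining gap (that identical spectral data might nonetheless produce non-isomorphic holomorphic structures on an infinite-genus surface) with the Its--Matveev/Gram--Schmidt/limit argument. Separately, your invocation of Torelli at the end is misplaced: the hypothesis of Background Theorem~\ref{T: Torelli} is equality of period matrices, which is exactly what the corollary asserts; if you have already produced a cycle-preserving biholomorphism, equality of the period matrices is immediate and Torelli is not needed, and if you have not, Torelli cannot supply it. To repair the proof you would either need to prove a sharper, genuinely equivariant version of the whole regularization procedure, or fall back on the contradiction argument as the paper does.
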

\noindent
Should the metric turn out to be differentiable, we set
$[[{\mathfrak{R}}^{vu}]_{\beta \gamma}] = [[0]_{\beta \gamma}]$,
and the erstwhile condition becomes vacuous.
\subsection{Proof of the Main Theorem}
\paragraph{}
To invoke Background Theorem~\ref{T: e-structure equivalence}
we have to show its hypothesis being satisfied. Our coframes
were chosen on general grounds, and therein lies some uncertainty as 
to whether our choices do not run afoul of the Cartan's group
reduction and normalization methods.\\
\indent
The first order of business is to delineate the intrinsic 
torsion coefficients. Those depend on the group structure
as well as on the integrability properties of the coframe.
Thus we find the requisite 2-forms for $i \leqslant k$.
The other case can be derived mutatis mutandis.
{\allowdisplaybreaks
\begin{align}
d{\Psi}_i =&\;d\Big(\sum_je^{{\widehat{w}}_{ij}}\Big(\cos 
{\widehat{\kappa}}_{ij}dx_j +
\sin {\widehat{\kappa}}_{ij}dx_{j+2k}\Big)\Big)\\\notag
=&\sum_l \sum_j\frac{\partial {\widehat{w}}_{ij}}{\partial x_l}
e^{{\widehat{w}}_{ij}}\Big(\cos{\widehat{\kappa}}_{ij}dx_{l} \wedge dx_j +
\sin {\widehat{\kappa}}_{ij}dx_{l} \wedge dx_{j+2k}\Big)\\\notag
+&\sum_l \sum_j\frac{\partial {\widehat{w}}_{ij}}{\partial x_{l+2k}}
e^{{\widehat{w}}_{ij}}\Big(\cos{\widehat{\kappa}}_{ij}dx_{l+2k} \wedge dx_j +
\sin {\widehat{\kappa}}_{ij}dx_{l+2k} \wedge dx_{j+2k}\Big)\\\notag
+&\sum_l \sum_j\frac{\partial {\widehat{\kappa}}_{ij}}{\partial x_l}
e^{{\widehat{w}}_{ij}}\Big(-\sin{\widehat{\kappa}}_{ij}dx_l \wedge dx_j +
\cos {\widehat{\kappa}}_{ij}dx_l \wedge dx_{j+2k}\Big)\\\notag
+&\sum_l \sum_j\frac{\partial {\widehat{\kappa}}_{ij}}{\partial x_{l+2k}}
e^{{\widehat{w}}_{ij}}\Big(-\sin{\widehat{\kappa}}_{ij}dx_{l+2k} \wedge dx_j +
\cos {\widehat{\kappa}}_{ij}dx_{l+2k} \wedge dx_{j+2k}\Big)\\\notag
=&\sum_l \sum_je^{{\widehat{w}}_{ij}}\Big(
\frac{\partial {\widehat{w}}_{ij}}{\partial x_{l}}\cos{\widehat{\kappa}}_{ij}
-\frac{\partial {\widehat{\kappa}}_{ij}}{\partial x_{l}} 
\sin{\widehat{\kappa}}_{ij}\Big)\\\notag
\times &\sum b^{u \wedge v}_{l+2k \wedge j}{\Psi}_u \wedge {\Psi}_v+
b^{u+2k \wedge v}_{l+2k \wedge j}{\Psi}_{u+2k} \wedge {\Psi}_v
+ b^{u+2k \wedge v+2k}_{l+2k \wedge j}{\Psi}_{u+2k} \wedge {\Psi}_{v+2k}\\\notag
+&\sum_l \sum_je^{{\widehat{w}}_{ij}}\Big(
\frac{\partial {\widehat{w}}_{ij}}{\partial x_{l+2k}}\cos{\widehat{\kappa}}_{ij}
-\frac{\partial {\widehat{\kappa}}_{ij}}{\partial x_{l+2k}} 
\sin{\widehat{\kappa}}_{ij}\Big)\\\notag
\times &\sum b^{u \wedge v}_{l \wedge j}{\Psi}_u \wedge {\Psi}_v+
b^{u+2k \wedge v}_{l \wedge j}{\Psi}_{u+2k} \wedge {\Psi}_v
+ b^{u+2k \wedge v+2k}_{l \wedge j}{\Psi}_{u+2k} \wedge {\Psi}_{v+2k}\\\notag
+&\sum_l \sum_je^{{\widehat{w}}_{ij}}\Big(
\frac{\partial {\widehat{w}}_{ij}}{\partial x_{l}}\sin{\widehat{\kappa}}_{ij}
+\frac{\partial {\widehat{\kappa}}_{ij}}{\partial x_{l}} 
\cos{\widehat{\kappa}}_{ij}\Big)\\\notag
\times &\sum b^{u \wedge v}_{l \wedge j+2k}{\Psi}_u \wedge {\Psi}_v+
b^{u+2k \wedge v}_{l \wedge j+2k}{\Psi}_{u+2k} \wedge {\Psi}_v
+ b^{u+2k \wedge v+2k}_{l \wedge j+2k}{\Psi}_{u+2k} \wedge {\Psi}_{v+2k}\\\notag
+&\sum_l \sum_je^{{\widehat{w}}_{ij}}\Big(
\frac{\partial {\widehat{w}}_{ij}}{\partial x_{l+2k}}\sin{\widehat{\kappa}}_{ij}
+\frac{\partial {\widehat{\kappa}}_{ij}}{\partial x_{l+2k}} 
\cos{\widehat{\kappa}}_{ij}\Big)\\\notag
\times &\sum b^{u \wedge v}_{l+2k \wedge j+2k}{\Psi}_u \wedge {\Psi}_v+
b^{u+2k \wedge v}_{l+2k \wedge j+2k}{\Psi}_{u+2k} \wedge {\Psi}_v
+ b^{u+2k \wedge v+2k}_{l+2k \wedge j+2k}{\Psi}_{u+2k} \wedge {\Psi}_{v+2k}.\notag
\end{align}}
\noindent
The coefficients above are entries of the lexicographically ordered
compound matrix induced by ${\Lambda}^2T^*TM$. They are defined by the equation
\begin{equation}
[b^u_v][\vec{\Psi}]=[\vec{dx}],\quad
b^{u \wedge v}_{i \wedge j}\overset{\textnormal{def}}{=}b^u_ib^v_j-b^u_jb^v_i. 
\end{equation}
\noindent
Thus $[b^{u \wedge v}_{i \wedge j}] \in GL(\binom{4k}{2}, \mathbb{R})
\times T_UU$ since the preframes have maximal rank in $U$.\\
\indent
Similarly, {\allowdisplaybreaks
\begin{align}
&d{\Psi}_{i+2k} =\;d\Big(\sum_je^{{\widehat{w}}_{ij}}\Big(\cos 
{\widehat{\kappa}}_{ij}dx_{j+2k} -
\sin {\widehat{\kappa}}_{ij}dx_{j}\Big)\Big)\\\notag
=&\sum_l \sum_j\frac{\partial {\widehat{w}}_{ij}}{\partial x_l}
e^{{\widehat{w}}_{ij}}\Big(\cos{\widehat{\kappa}}_{ij}dx_{l} \wedge 
dx_{j+2k}-\sin {\widehat{\kappa}}_{ij}dx_{l} \wedge dx_{j}\Big)\\\notag
+&\sum_l \sum_j\frac{\partial {\widehat{w}}_{ij}}{\partial x_{l+2k}}
e^{{\widehat{w}}_{ij}}\Big(\cos{\widehat{\kappa}}_{ij}dx_{l+2k} \wedge 
dx_{j+2k} -\sin {\widehat{\kappa}}_{ij}dx_{l+2k} \wedge dx_{j}\Big)\\\notag
-&\sum_l \sum_j\frac{\partial {\widehat{\kappa}}_{ij}}{\partial x_l}
e^{{\widehat{w}}_{ij}}\Big(\sin{\widehat{\kappa}}_{ij}dx_l \wedge 
dx_{j+2k} +\cos {\widehat{\kappa}}_{ij}dx_l \wedge dx_{j}\Big)\\\notag
-&\sum_l \sum_j\frac{\partial {\widehat{\kappa}}_{ij}}{\partial x_{l+2k}}
e^{{\widehat{w}}_{ij}}\Big(\sin{\widehat{\kappa}}_{ij}dx_{l+2k} \wedge 
dx_{j+2k}+\cos {\widehat{\kappa}}_{ij}dx_{l+2k} \wedge dx_{j}\Big)\\\notag
=&\sum_l \sum_je^{{\widehat{w}}_{ij}}\Big(
\frac{\partial {\widehat{w}}_{ij}}{\partial x_{l}}\cos{\widehat{\kappa}}_{ij}
-\frac{\partial {\widehat{\kappa}}_{ij}}{\partial x_{l}} 
\sin{\widehat{\kappa}}_{ij}\Big)\\\notag
\times &\sum b^{u \wedge v}_{l \wedge j+2k}{\Psi}_u \wedge {\Psi}_v+
b^{u+2k \wedge v}_{l \wedge j+2k}{\Psi}_{u+2k} \wedge {\Psi}_v
+ b^{u+2k \wedge v+2k}_{l \wedge j+2k}{\Psi}_{u+2k} \wedge {\Psi}_{v+2k}\\\notag
+&\sum_l \sum_je^{{\widehat{w}}_{ij}}\Big(
\frac{\partial {\widehat{w}}_{ij}}{\partial x_{l+2k}}\cos{\widehat{\kappa}}_{ij}
-\frac{\partial {\widehat{\kappa}}_{ij}}{\partial x_{l+2k}} 
\sin{\widehat{\kappa}}_{ij}\Big)\\\notag
\times &\sum b^{u \wedge v}_{l+2k \wedge j+2k}{\Psi}_u \wedge {\Psi}_v+
b^{u+2k \wedge v}_{l+2k \wedge j+2k}{\Psi}_{u+2k} \wedge {\Psi}_v
+ b^{u+2k \wedge v+2k}_{l+2k \wedge j+2k}{\Psi}_{u+2k} \wedge {\Psi}_{v+2k}\\\notag
-&\sum_l \sum_je^{{\widehat{w}}_{ij}}\Big(
\frac{\partial {\widehat{w}}_{ij}}{\partial x_{l}}\sin{\widehat{\kappa}}_{ij}
+\frac{\partial {\widehat{\kappa}}_{ij}}{\partial x_{l}} 
\cos{\widehat{\kappa}}_{ij}\Big)\\\notag
\times &\sum b^{u \wedge v}_{l \wedge j}{\Psi}_u \wedge {\Psi}_v+
b^{u+2k \wedge v}_{l \wedge j}{\Psi}_{u+2k} \wedge {\Psi}_v
+ b^{u+2k \wedge v+2k}_{l \wedge j}{\Psi}_{u+2k} \wedge {\Psi}_{v+2k}\\\notag
-&\sum_l \sum_je^{{\widehat{w}}_{ij}}\Big(
\frac{\partial {\widehat{w}}_{ij}}{\partial x_{l+2k}}\sin{\widehat{\kappa}}_{ij}
+\frac{\partial {\widehat{\kappa}}_{ij}}{\partial x_{l+2k}} 
\cos{\widehat{\kappa}}_{ij}\Big)\\\notag
\times &\sum b^{u \wedge v}_{l+2k \wedge j}{\Psi}_u \wedge {\Psi}_v+
b^{u+2k \wedge v}_{l+2l \wedge j}{\Psi}_{u+2k} \wedge {\Psi}_v
+ b^{u+2k \wedge v+2k}_{l+2k \wedge j}{\Psi}_{u+2k} \wedge {\Psi}_{v+2k}\notag.
\end{align}}
\indent
Decomposing the structure group~\eqref{E: structure group} into one-parameter
subgroups acting on $2\times 2$ blocks combining the conjugate constituent 
1-forms, we begin by computing the Maurer-Cartan matrix of the $i$-th subgroup:
\begin{equation}
\Bigg( d\left[\begin{array}{clcr}
\cos t & \sin t \\
-\sin t & \cos t 
\end{array} \right]\Bigg)
\left[\begin{array}{clcr}
\cos t & -\sin t \\
\sin t & \cos t 
\end{array} \right]=
\left[\begin{array}{clcr}
0 & {\beta}_i \\
-{\beta}_i & 0 
\end{array} \right].
\end{equation}
\noindent
Writing down the structure equation~\eqref{E: Cartan2},
we arrive at
\begin{equation}
d \left[\begin{array}{l}
{\Psi}_i\\
{\Psi}_{i+2k}\\
\end{array}\right] =
 \left[\begin{array}{clcr}
0 & {\beta}_i \\
-{\beta}_i & 0 
\end{array} \right]
\wedge
\left[\begin{array}{l}
{\Psi}_i\\
{\Psi}_{i+2k}\\
\end{array}\right] +
\left[\begin{array}{l}
\sum(\cdot) \wedge (\cdot)\\
\sum(\cdot) \wedge (\cdot)\\
\end{array}\right]
\end{equation}
\noindent
upon modifying the Maurer-Cartan matrix via {\allowdisplaybreaks
\begin{align}\label{E: absorbed 2-forms}
&{\beta}_i \mapsto {\beta}_i \\\notag
-&\sum_l \sum_je^{{\widehat{w}}_{ij}}\Big(
\frac{\partial {\widehat{w}}_{ij}}{\partial x_{l}}\cos{\widehat{\kappa}}_{ij}
-\frac{\partial {\widehat{\kappa}}_{ij}}{\partial x_{l}} 
\sin{\widehat{\kappa}}_{ij}\Big)\\\notag
\times &\sum (b^{u \wedge i+2k}_{l \wedge j+2k}+
b^{u+2k \wedge i+2k}_{l \wedge j+2k}){\Psi}_{i+2k}\\\notag
-&\sum_l \sum_je^{{\widehat{w}}_{ij}}\Big(
\frac{\partial {\widehat{w}}_{ij}}{\partial x_{l+2k}}\cos{\widehat{\kappa}}_{ij}
-\frac{\partial {\widehat{\kappa}}_{ij}}{\partial x_{l+2k}} 
\sin{\widehat{\kappa}}_{ij}\Big)\\\notag
\times &\sum (b^{u \wedge i+2k}_{l+2k \wedge j+2k}+
b^{u+2k \wedge i+2k}_{l+2k \wedge j+2k}){\Psi}_{i+2k}\\\notag
-&\sum_l \sum_je^{{\widehat{w}}_{ij}}\Big(
\frac{\partial {\widehat{w}}_{ij}}{\partial x_{l}}\sin{\widehat{\kappa}}_{ij}
+\frac{\partial {\widehat{\kappa}}_{ij}}{\partial x_{l}} 
\cos{\widehat{\kappa}}_{ij}\Big)\\\notag
\times &\sum (b^{u \wedge i+2k}_{l \wedge j+2k}+
b^{u+2k \wedge i+2k}_{l \wedge j+2k}){\Psi}_{i+2k}\\\notag
-&\sum_l \sum_je^{{\widehat{w}}_{ij}}\Big(
\frac{\partial {\widehat{w}}_{ij}}{\partial x_{l+2k}}\sin{\widehat{\kappa}}_{ij}
+\frac{\partial {\widehat{\kappa}}_{ij}}{\partial x_{l+2k}} 
\cos{\widehat{\kappa}}_{ij}\Big)\\\notag
\times &\sum (b^{u \wedge i+2k}_{l+2k \wedge j+2k}+
b^{u+2k \wedge i+2k}_{l+2k \wedge j+2k}){\Psi}_{i+2k}\\\notag 
+&\sum_l \sum_je^{{\widehat{w}}_{ij}}\Big(
\frac{\partial {\widehat{w}}_{ij}}{\partial x_{l}}\cos{\widehat{\kappa}}_{ij}
-\frac{\partial {\widehat{\kappa}}_{ij}}{\partial x_{l}} 
\sin{\widehat{\kappa}}_{ij}\Big)\\\notag
\times &\sum (b^{u \wedge i+2k}_{l \wedge j+2k}+
b^{u+2k \wedge i+2k}_{l \wedge j+2k}){\Psi}_i\\\notag
+&\sum_l \sum_je^{{\widehat{w}}_{ij}}\Big(
\frac{\partial {\widehat{w}}_{ij}}{\partial x_{l+2k}}\cos{\widehat{\kappa}}_{ij}
-\frac{\partial {\widehat{\kappa}}_{ij}}{\partial x_{l+2k}} 
\sin{\widehat{\kappa}}_{ij}\Big)\\\notag
\times &\sum (b^{u \wedge i+2k}_{l+2k \wedge j+2k}+
b^{u+2k \wedge i+2k}_{l+2k \wedge j+2k}){\Psi}_i\\\notag 
-&\sum_l \sum_je^{{\widehat{w}}_{ij}}\Big(
\frac{\partial {\widehat{w}}_{ij}}{\partial x_{l}}\sin{\widehat{\kappa}}_{ij}
+\frac{\partial {\widehat{\kappa}}_{ij}}{\partial x_{l}} 
\cos{\widehat{\kappa}}_{ij}\Big)\\\notag
\times &\sum (b^{u \wedge i+2k}_{l \wedge j+2k}+
b^{u+2k \wedge i+2k}_{l \wedge j+2k}){\Psi}_i\\\notag
-&\sum_l \sum_je^{{\widehat{w}}_{ij}}\Big(
\frac{\partial {\widehat{w}}_{ij}}{\partial x_{l+2k}}\sin{\widehat{\kappa}}_{ij}
+\frac{\partial {\widehat{\kappa}}_{ij}}{\partial x_{l+2k}} 
\cos{\widehat{\kappa}}_{ij}\Big)\\\notag
\times &\sum (b^{u \wedge i+2k}_{l+2k \wedge j+2k}+
b^{u+2k \wedge i+2k}_{l+2k \wedge j+2k}){\Psi}_i\notag.
\end{align}}
\noindent
Thanks to a) our matrix group being sparse, and b) our normal
form being so simple, this modification is an example of Lie
algebra-compatible absorption. Sparsity is manifested by the fact
that out of $2k(4k-1)$ linearly independent monomials spanning
${\Lambda}^2T^*TM$, only $2k$ are absorbed - precisely those
involving symplectic conjugate pairs of variables.\\
\indent
Hence the intrinsic torsion coefficients occur for the monomial 2-forms
not listed in~\eqref{E: absorbed 2-forms}.
In terms of combinations of indices, for ${\gamma}^i_{ms}$ the general 
expression is {\allowdisplaybreaks 
\begin{align}\label{E: expression for gamma}
{\gamma}^i_{ms} 
=&\sum_l \sum_je^{{\widehat{w}}_{ij}}\Big(
\frac{\partial {\widehat{w}}_{ij}}{\partial x_{l}}\cos{\widehat{\kappa}}_{ij}
-\frac{\partial {\widehat{\kappa}}_{ij}}{\partial x_{l}} 
\sin{\widehat{\kappa}}_{ij}\Big)b^{m \wedge s}_{l \wedge j}\\\notag
+&\sum_l \sum_je^{{\widehat{w}}_{ij}}\Big(
\frac{\partial {\widehat{w}}_{ij}}{\partial x_{l+2k}}\cos{\widehat{\kappa}}_{ij}
-\frac{\partial {\widehat{\kappa}}_{ij}}{\partial x_{l+2k}} 
\sin{\widehat{\kappa}}_{ij}\Big)b^{m \wedge s}_{l+2k \wedge j}\\\notag
+&\sum_l \sum_je^{{\widehat{w}}_{ij}}\Big(
\frac{\partial {\widehat{w}}_{ij}}{\partial x_{l}}\sin{\widehat{\kappa}}_{ij}
+\frac{\partial {\widehat{\kappa}}_{ij}}{\partial x_{l}} 
\cos{\widehat{\kappa}}_{ij}\Big)b^{m \wedge s}_{l \wedge j+2k}\\\notag
+&\sum_l \sum_je^{{\widehat{w}}_{ij}}\Big(
\frac{\partial {\widehat{w}}_{ij}}{\partial x_{l+2k}}\sin{\widehat{\kappa}}_{ij}
+\frac{\partial {\widehat{\kappa}}_{ij}}{\partial x_{l+2k}} 
\cos{\widehat{\kappa}}_{ij}\Big)b^{m \wedge s}_{l+2k \wedge j+2k}.\notag
\end{align}}
One other coefficient (to prime one's intuition), 
${\gamma}^{i+2k}_{m+2ks},\;\;s \ne i$ is given by {\allowdisplaybreaks
\begin{align}
{\gamma}^{i+2k}_{m+2ks} 
=&\sum_l \sum_je^{{\widehat{w}}_{ij}}\Big(
\frac{\partial {\widehat{w}}_{ij}}{\partial x_{l}}\cos{\widehat{\kappa}}_{ij}
-\frac{\partial {\widehat{\kappa}}_{ij}}{\partial x_{l}} 
\sin{\widehat{\kappa}}_{ij}\Big)b^{m+2k \wedge s}_{l \wedge j}\\\notag
+&\sum_l \sum_je^{{\widehat{w}}_{ij}}\Big(
\frac{\partial {\widehat{w}}_{ij}}{\partial x_{l+2k}}\cos{\widehat{\kappa}}_{ij}
-\frac{\partial {\widehat{\kappa}}_{ij}}{\partial x_{l+2k}} 
\sin{\widehat{\kappa}}_{ij}\Big)b^{m+2k \wedge s}_{l+2k \wedge j}\\\notag
+&\sum_l \sum_je^{{\widehat{w}}_{ij}}\Big(
\frac{\partial {\widehat{w}}_{ij}}{\partial x_{l}}\sin{\widehat{\kappa}}_{ij}
+\frac{\partial {\widehat{\kappa}}_{ij}}{\partial x_{l}} 
\cos{\widehat{\kappa}}_{ij}\Big)b^{m+2k \wedge s}_{l \wedge j+2k}\\\notag
+&\sum_l \sum_je^{{\widehat{w}}_{ij}}\Big(
\frac{\partial {\widehat{w}}_{ij}}{\partial x_{l+2k}}\sin{\widehat{\kappa}}_{ij}
+\frac{\partial {\widehat{\kappa}}_{ij}}{\partial x_{l+2k}} 
\cos{\widehat{\kappa}}_{ij}\Big)b^{m+2k \wedge s}_{l+2k \wedge j+2k}.\notag
\end{align}}
\noindent
We note (without performing actual computations) that the 
parenthetical expressions cannot all vanish at the loci
of nondifferentiability/rank deficiency. In part, this
is due to the fact that all $\sin{\widehat{\kappa}}_{iu},
\;\sin{\widehat{\kappa}}_{mv}$ vanish at $U^N \cup U^O$ and
nowhere else. And the summation
over $l, j$ in conjunction with the fact that our compound
matrix is invertible ensures 
$|{\gamma}^i_{ms}|,\; |{\gamma}^{i+2k}_{m+2ks}|>0$ on $T_UU$.\\
\indent 
We split the rest of the proof into smaller fragments, each 
stated as a subsidiary lemma.
\begin{lem}
The action of the structure group on the intrinsic torsion of 
a $4k$-coframe comprised of~\eqref{E: preframe} and its 
complement is limited to translations involving 
symplectic conjugates.
\end{lem}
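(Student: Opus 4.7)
My proof proposal would start from the formula \eqref{E: torsion group action} already established for the action of the structure group on torsion coefficients, applied to the very specific group $\bigoplus_{i=1}^{2k} SO_i(2,\mathbb{R})$ whose block-diagonal representation was computed in the preceding calculation. First I would observe that in this representation the Maurer-Cartan connection matrix is itself block-diagonal, with only one independent 1-form $\beta_i$ per $2\times 2$ block, and that the corresponding structure constants $a^u_{vl}$ of the Lie algebra vanish unless the pair $(u,v)$ lies inside the symplectic conjugate set $\{i, i+2k\}$ for some $i \leqslant 2k$. This is precisely the content of the matrix computation leading to \eqref{E: absorbed 2-forms}.

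Next, I would feed this sparsity into the three summands of \eqref{E: torsion group action}. The first two lines (the rotations and rescalings) carry the combinations $\gamma^i_{jm} a^m_{ml} o^l$ and $a^u_{il} o^l \gamma^i_{um}$; since every nonzero $a^u_{vl}$ forces $u$ and $v$ to be symplectic conjugates and since the index set of the intrinsic torsion coefficients computed in the preceding calculation (those $\gamma^i_{ms}$ written out in \eqref{E: expression for gamma} and its analogues) excludes precisely the monomials ${\Psi}_i\wedge{\Psi}_{i+2k}$ already removed in \eqref{E: absorbed 2-forms}, every product of a structure constant with a surviving torsion coefficient vanishes identically. Therefore the rotation and rescaling contributions of \eqref{E: torsion group action} are zero on the present $4k$-coframe, leaving only the translation term
\begin{equation*}
\sum a^v_{(\lozenge \overset{d}{\rightarrow} jm)l} o^l \gamma^{(\lozenge \overset{d}{\rightarrow} jm)}_{jm} \mod \psi.
\end{equation*}

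I would then unpack the index $\lozenge \overset{d}{\rightarrow} jm$: by definition it records which component $d\Psi^{\lozenge}$ actually contributes an extra $\Psi^j\wedge\Psi^m$ term once the gauge $\beta_i \mapsto \beta_i + (\cdots)$ is reabsorbed as in \eqref{E: absorbed 2-forms}. Because the Maurer-Cartan form $\beta_i$ appears only in the pair $(d\Psi_i, d\Psi_{i+2k})$, the only nonvanishing values of $a^v_{(\lozenge \to jm)l}$ occur when $\lozenge$ and $v$ form a symplectic conjugate pair, so the resulting translation $\gamma^i_{jm} \mapsto \gamma^i_{jm} + a^{i+2k}_{i l}\, o^l \gamma^{i+2k}_{jm}$ (and symmetrically with $i \leftrightarrow i+2k$) shuffles torsion coefficients only within a conjugate pair and never mixes distinct $SO_i(2,\mathbb{R})$ blocks.

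The main obstacle I anticipate is purely combinatorial: keeping the four families of monomials $\Psi_u\wedge\Psi_v,\; \Psi_{u+2k}\wedge\Psi_v,\; \Psi_u\wedge\Psi_{v+2k},\; \Psi_{u+2k}\wedge\Psi_{v+2k}$ correctly separated when substituting into \eqref{E: torsion group action}, because the bookkeeping of the compound matrix $[b^{u\wedge v}_{i\wedge j}]$ from the derivation of \eqref{E: expression for gamma} is dense even though the Lie-algebra contribution is sparse. Once one checks that the absorption in \eqref{E: absorbed 2-forms} has already exhausted the range of $\delta: \mathfrak{g}\otimes V^* \to V\otimes\Lambda^2V^*$ for this particular $G$, the statement reduces to the remark that the residual cokernel $H^{0,2}(\mathfrak g)$ inherits only the symplectic-conjugate translations, which is the conclusion of the lemma.
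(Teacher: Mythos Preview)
Your overall strategy coincides with the paper's: both feed the block-diagonal Maurer--Cartan matrix of $\bigoplus_{i=1}^{2k} SO_i(2,\mathbb{R})$ into \eqref{E: torsion group action} and read off which pieces of the group action survive on the intrinsic torsion. Your observation that the only nonzero structure constants $a^u_{vl}$ occur for symplectic conjugate pairs $\{u,v\}=\{i,i+2k\}$, and that the diagonal entries $a^m_{ml}$ vanish because each $SO_i(2)$ generator is skew, is exactly what the paper uses.

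There is, however, a gap in your treatment of the second line of \eqref{E: torsion group action}. You argue that the ``rotation'' contribution $a^u_{il}o^l\gamma^i_{um}$ (and its companion) vanishes because the relevant torsion coefficients were removed by the absorption \eqref{E: absorbed 2-forms}. That is not the case: the absorption kills only the monomials $\Psi_i\wedge\Psi_{i+2k}$, i.e.\ the coefficients $\gamma^\bullet_{i\,(i+2k)}$, whereas the line-2 contribution picks out coefficients such as $\gamma^i_{(m+2k)s}$ and $\gamma^i_{m(s+2k)}$, which are genuine surviving intrinsic torsion terms (cf.\ \eqref{E: expression for gamma}). These terms do \emph{not} vanish. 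The paper makes this explicit by writing all three surviving contributions
\[
d\gamma^i_{ms}=\gamma^i_{(m+2k)s}\beta_m-\gamma^i_{m(s+2k)}\beta_s+\gamma^{i+2k}_{ms}\beta_i\ \ \bmod\ \Psi_m\wedge\Psi_s,
\]
and then observes that the skewness $a^m_{ml}=a^s_{sl}=0$ eliminates only the rescalings, so that \emph{all three} remaining terms are translations, each replacing a single index by its symplectic conjugate. Your final displayed translation $\gamma^i_{jm}\mapsto\gamma^i_{jm}+a^{i+2k}_{il}o^l\gamma^{i+2k}_{jm}$ captures only the third of these and misses the two lower-index shifts. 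The conclusion of the lemma still holds --- those lower-index shifts are themselves ``translations involving symplectic conjugates'' --- but your argument for why they were absent is incorrect, and your description of the residual action is incomplete.
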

\begin{proof}
Take an arbitrary intrinsic torsion coefficient ${\gamma}^i_{ms}$.
All the sources of ${\Psi}_m \wedge {\Psi}_s$ are listed here:
$$d{\gamma}^i_{ms} = \sum{\gamma}^i_{m+2ks}{\beta}_m - 
\sum {\gamma}^i_{ms+2k}{\beta}_s +\sum{\gamma}^
{i+2k}_{ms}{\beta}_i\;\;\mod {\Psi}_m \wedge {\Psi}_s.$$
\indent 
Inspecting this expression, and comparing it with the general
infinitesimal group action
formula~\eqref{E: torsion group action}, we conclude that
since $o^l$ is the unique anti-Hermitian matrix generating
$SO_i(2,\mathbb{R})$, we have
$a^m_{ml} = a^s_{sl} =0$. Hence only translations 
are present. Now in view of~\eqref{E: expression for gamma}, 
we can exclude some combinations of indices. Thus for 
${\gamma}^i_{ms},\;s, m \ne i+2k$, for those differentials 
are absorbed into ${\beta}_i$.
\end{proof}
\begin{lem}
The equivalence problem involving $4k$ coframes~\eqref{E: preframe}
reduces to a regular $e$-structure.
\end{lem}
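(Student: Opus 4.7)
The plan is to carry out Cartan's reduction in two stages and then verify the regularity clause of Background Theorem~\ref{T: e-structure equivalence}. The structure group $\bigoplus_{i=1}^{2k} SO_i(2,\mathbb{R})$ has dimension exactly $2k$, matching the number of Maurer-Cartan forms $\beta_1,\ldots,\beta_{2k}$. By the previous lemma, each $\beta_i$ acts on the intrinsic torsion purely by translation, and from the relation $d\gamma^i_{ms} \equiv \gamma^{i+2k}_{ms}\beta_i \pmod{\beta_m,\beta_s,\Psi^j\wedge\Psi^l}$ one can normalize $\gamma^i_{ms}$ to any preassigned constant (say zero) by choosing $t_i$ suitably, provided $\gamma^{i+2k}_{ms}$ does not vanish. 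I would select, for each $i\leqslant 2k$, such a distinguished pair $(m,s)$ disjoint from those chosen for $i'\neq i$, so that the $2k$ normalization equations decouple into a block-diagonal system. The isotropy group $G_{\tau_0}$ is then trivial and the $G$-structure reduces to an $\{e\}$-structure.

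Next I would confirm that the resulting coframe $\Psi=(\Psi_1,\ldots,\Psi_{4k})$ is a bona fide $e$-structure, since the structure equations now take the form $d\Psi^i = \sum \gamma^i_{jm}(x)\,\Psi^j\wedge\Psi^m$ with no residual group parameters. Regularity would be established by invoking the freedom of the prolonged auxiliary scalar fields $\widehat{w}_{ij},\widehat{\kappa}_{ij}$ constructed in Section 4.3: freedom guarantees that the osculating jets of these fields have maximal rank $c_{vu}$ throughout $T_UU$, which in turn forces the differentials of the torsion coefficients $\gamma^i_{jm}$ and their iterated covariant derivatives along $\Psi$ to remain linearly independent up to the critical jet order. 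Consequently, the rank sequence $r_s = \mathrm{rank}\{d{\digamma}_s(\Psi)\}$ stabilizes at some locally constant integer $r_j$, so $\Psi$ furnishes a regular $e$-structure of order $j$ and rank $r_j$.

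The hard part will be uniformity of the normalization across the loci $U^N$ and $U^O$, where the factors $\sin\widehat{\kappa}_{ij}$ and $\cos\widehat{\kappa}_{ij}$ collapse to $0$ or $\pm 1$ and the expression~\eqref{E: expression for gamma} for $\gamma^{i+2k}_{ms}$ threatens to degenerate. To secure $|\gamma^{i+2k}_{ms}|>0$ uniformly on $T_UU$ one must leverage the observation recorded just after~\eqref{E: expression for gamma} --- that the four parenthetical trigonometric-weight combinations cannot all vanish simultaneously on $U^N\cup U^O$ --- together with the invertibility of the compound matrix $[b^{u\wedge v}_{i\wedge j}]$. A secondary but essentially technical point is to verify that as the isospectral torus $\mathfrak{T}(U)$ collapses across leaves where $\lambda^q_{2s-1}=\lambda^q_{2s}$, the normalized coframe and its torsion invariants vary continuously, so that $r_s$ does not jump and regularity persists on transition leaves.
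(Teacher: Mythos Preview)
Your plan is broadly aligned with the paper's proof: both reduce the torus group $\bigoplus_{i=1}^{2k}SO_i(2,\mathbb{R})$ to $\{e\}$ via the translation action identified in Lemma~4.1, and both invoke the freedom of the prolonged scalar fields $\widehat w_{ij},\widehat\kappa_{ij}$ (together with the nonvanishing remark following~\eqref{E: expression for gamma}) to secure constant rank of the invariant set $\digamma_s(\Psi)$. On the regularity side there is nothing to separate the two arguments.

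The routes to reduction differ, however. The paper does not attempt your block-diagonal decoupling: instead it first argues abstractly that the $G$-action on intrinsic torsion has a single orbit (by starting from $\beta_s=0$ and continuously ``turning on'' the $2k$ group parameters to deform any torsion configuration into any other), and only after establishing first-order constant type does it normalize---setting all nonvanishing $\gamma^i_{jm}$ to $1$ rather than driving selected ones to $0$. Your explicit scheme is more constructive but your decoupling claim is underspecified: from Lemma~4.1 the infinitesimal variation $d\gamma^i_{ms}$ involves $\beta_m$ and $\beta_s$ as well as $\beta_i$, so choosing pairwise-disjoint pairs $(m_i,s_i)$ does not by itself make the $2k\times 2k$ system block-diagonal; you would need to argue that the resulting linear system in the $\beta$'s is invertible (which follows from nonvanishing of suitable $\gamma^{i+2k}_{m_i s_i}$ but should be stated). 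The paper also records one piece of information you omit: the second-jet layer $\gamma^i_{jm\mid l_1}$ contributes nothing new to the rank, so the $e$-structure has order at most $2$. Your remarks on uniformity across $U^N\cup U^O$ and on torus collapse are sensible refinements that the paper does not address explicitly.
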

\begin{proof}
Regularity in the context of $e$-structures presupposes that a) the
action of the structure group on the intrinsic torsion forms a
single orbit, and is amenable to normalization, and b) the rank 
of the set
$${\digamma}_{s}(\Psi)
\{{\gamma}^i_{jm},{\gamma}^i_{jm|l_1},\cdots ,{\gamma}^v_{jm|l_1, 
\cdots,|l_{s-1}}; 1 \leqslant i, j, m, v, l_1,\cdots, l_{s-1} \leqslant 4k\}$$
\noindent
is constant on some open neighborhood of a fixed point $(x_1,\cdots,x_{4k})$.\\
\indent
Concerning the orbit unity, we can begin with $d{\gamma}^i_{jm} \equiv 0 
\mod {\Psi}_j \wedge {\Psi}_m$, which entails ${\beta}_s =0,\;\; \forall s 
\leqslant 2k$, and then gradually `turn on' the ${\beta}_s$. This way, 
judiciously choosing values of the $2k$ group parameters, we can continuously 
deform the set of intrinsic torsion coefficients to obtain any other 
configuration within the $2k$-dimensional group parametric space.\\
\indent
To demonstrate the rank being constant, we expand on the remark
below~\eqref{E: expression for gamma}. Namely, the intrinsic coefficients
are nonvanishing due to the structure of our standard normal form as well
as the facts that the freedom relation $\wp$ is open and dense in the
jet bundle, and, in addition, satisfies the $h$-principle.\\
\indent
The `next layer' consists of first derivatives of the intrinsic torsion
coefficients ${\gamma}^i_{jm|l_1}$. They involve $\frac{\partial 
b^{m \wedge s}_{v \wedge j}}{\partial x_{l}}$, which, in turn, are
products of $\frac{\partial {\widehat{\kappa}}_{ij}}{\partial x_{l}}$,
$\frac{{\partial}^2 {\widehat{\kappa}}_{ij}}{\partial x_{l}
\partial x_{v}}$,$\frac{\partial {\widehat{w}}_{ij}}{\partial x_{l}}$,
$\frac{{\partial}^2 \widehat{w}_{ij}}{\partial x_{l}
\partial x_{v}}$, and $\sin{\widehat{\kappa}}_{iu},
\;\sin{\widehat{\kappa}}_{mv}$ all summed over $l, v, j$. It is not
difficult to see that ${\gamma}^i_{jm|l_1}$ do not contribute anything
to the rank of ${\digamma}_{2}(\Psi)$ as opposed to a scenario wherein
some ${\gamma}^i_{jm}(0,0,\cdots,0)=0$ but the rank remains constant
due to the contribution of ${\gamma}^i_{jm|l_1}(0,0,\cdots,0)>0$. 
\end{proof}
\indent
Now we can scale all the ${\gamma}^i_{jm}$ that do not vanish
identically to be equal to one, and 
${\beta}_s \equiv 0 \mod {\Psi}$. $\bigoplus_{i=1}^{2k} 
SO_i(2, \mathbb{R})$ has been reduced to $\{e\}$. We fix $4k$ unique 
smooth functions $f_i,\;\;f_{i+2k}$ so that ${\beta}_i = f_i{\Psi}_i + 
f_{i+2k}{\Psi}_{i+2k}$, and we have an $e$-structure. All the 
requirements listed in the hypothesis of Background
Theorem~\ref{T: e-structure equivalence} are satisfied. 
That completes the proof.\\
\indent
To prove Corollary~\ref{T: corollary} we remark that in view of
Lemma~\ref{T: period matrix invariance}, the Riemann period matrices
are independent of the fiber variables. Now we claim that for 
an arbitrary ${\AA}: T_UU \longrightarrow T_VV$, the induced map of 
cotangent bundles is on target: ${\AA}^* (\Psi) \in \Xi (V, ds^2)$.
With the Main Theorem above, we know that ${\AA}(U)=V,\; {\AA}(U^N)
=V^N,\; {\AA}(U^O)=V^O$. Consequently, the periods, periodic spectra 
of the Hill operators for every metric tensor coefficient,
and all the norming constants are identical on $U$
and $V$. Now we think of ${\mathfrak{T}}_{\mathbb{C}}(U)$ as a 
fibration over a foliation of codimension $2k-1$, whose fibers are
Riemann surfaces (including the limiting case of 
$[(\mathbb{C}\backslash \{{\lambda}_i\})\times 
(\mathbb{C}\backslash \{{\lambda}_i\})]/_{\backsim}$, and fix one
leaf determined by the coordinates $(x_2, x_3, \cdots, x_{2k})$. 
However, the limiting fiber occurs only if the underlying
leaf does not intersect the loci of nondifferentiability/rank
deficiency. Formally,
$$(U^N \cup U^O) \cap \{(x_2, x_3, \cdots, x_{2k})|{\mathfrak{T}}_
{\mathbb{C}}(x_2, x_3, \cdots, x_{2k}) = [(\mathbb{C}\backslash 
\{{\lambda}_i\})\times (\mathbb{C}\backslash \{{\lambda}_i\})]
/_{\backsim} \}= \emptyset.$$
\noindent
Additionally, between transversality of the foliation (to the 
loci of nondifferentiability/rank deficiency) and the smoothness,
the only possible scenario would materialize if either
$\partial (U^N \cup U^O) \ne \emptyset$, or $\dim (U^N \cup U^O)
< 2k-1$. At any rate, the presence of constant potentials is 
completely determined by the Laplace-Beltrami equation.
These conditions imply that zero Riemann period matrices are
mapped onto zero period matrices by an arbitrary isometry,
and the induced biholomorphic map on the fiber is the identity.
One such example is an axisymmetric black hole with the incomplete
event horizon.\\
\indent
We have disposed of the limiting surfaces, so from this point on, the
fiber is a bona fide Riemann surface of infinite genus ${\mathfrak{T}}_
{\mathbb{C}}$. Now we truncate it so as to have only the largest
$m$ homology cycles $A_1, B_1,\cdots, A_m,B_m$ as the basis of its
relative homology group: ${\mathfrak{T}}_{\mathbb{C}}^m$ has
$H_1({\mathfrak{T}}_{\mathbb{C}}^m \backslash \partial
{\mathfrak{T}}_{\mathbb{C}}^m, \mathbb{Z})$ being generated by
the cycles $(A_1, B_1,\cdots, A_m,B_m)$. This is a compact Riemann
surface.\\
\indent
If ${\AA}^*({\mathfrak{T}}_{\mathbb{C}} (x_2, x_3, \cdots, x_{2k}))
\ncong {\mathfrak{T}}_{\mathbb{C}} (y_2, y_3, \cdots, y_{2k})$,
by Background Theorem~\ref{T: Torelli} (Torelli theorem) we have
${\AA}^*([{\mathfrak{R}}^{ij}]_{\beta \gamma}(x_2, x_3, \cdots, x_{2k}))
\ne [{\mathfrak{R}}^{ij}]_{\beta \gamma}(y_2, y_3, \cdots, y_{2k})$, 
and since all the periods are identical, there must exist a
holomorphic form  $d\Phi \in \text{Hol}({\mathfrak{T}}_{\mathbb{C}}
(x_2, x_3, \cdots, x_{2k}))$ or  $d\Phi \in \text{Hol}
({\mathfrak{T}}_{\mathbb{C}}(y_2, y_3, \cdots, y_{2k}))$, responsible
for the discrepancy. By Its-Matveev formula~\eqref{E: Its-Matveev},
$d \Phi$ translates into a particular Hill potential. On the real
torus ${\mathfrak{T}}^m (y_2, y_3, \cdots, y_{2k})$ it corresponds to
an $m$-tuple of tied spectrum eigenvalues $({\mu}_1, \cdots  {\mu}_m)$.
But that $m$-tuple is already accounted for by virtue of the fact that
all the lacunae are identical on $U$ and $V$. Hence we are compelled
to look for the offending ${\mu}_i$ someplace else, more specifically
at the cycles with $i>m$. Now given that $\exists \;d{\Phi}_m \in 
\text{Hol}({\mathfrak{T}}_{\mathbb{C}}^m)$ with
$$\oint_{A_i}d\Phi = \oint_{A_i}d{\Phi}_m,\;\;\; i\leqslant m,$$ 
we can apply the Gram-Schmidt orthogonalization algorithm to produce
$d{\Phi}^{\bot}$ such that $$\oint_{A_i}d{\Phi}^{\bot} =0, \;\;i 
\leqslant m.$$ Now we let $m \longrightarrow \infty$. Then by 
(\cite{FKT}, Chapter 1,Theorem 1.17), $d{\Phi}^{\bot}=0$ and we have 
reached a contradiction assuming $${\AA}^*({\mathfrak{T}}_{\mathbb{C}} 
(x_2, x_3, \cdots,x_{2k})) \ncong {\mathfrak{T}}_{\mathbb{C}} 
(y_2, y_3, \cdots, y_{2k}).$$

\subsection{Weak equivalence}
\paragraph{}
Having set the stage by describing in great detail the problem of equivalence
of nondifferentiable metrics, we now proceed to take a closer look at an
approximation problem of nondifferentiable metrics. Is there a way to tell
if two such metrics expressed in terms of the same local coordinate system
are so `close' to each other that their first derivatives match? \\
\indent
Courant algebroids to the rescue! Recall~\eqref{E: bilinear forms}:
\begin{equation}
(X_1 + {\xi}_1, X_2 + {\xi}_2)_{+} =
\frac{1}{2}(\langle {\xi}_1, X_2 \rangle + \langle {\xi}_2, X_1 \rangle).
\end{equation} 
\noindent
This is a natural nondegenerate bilinear form associated with Courant
algebroids. It allows one to circumscribe the extent of nonintegrability
of a preframe. 
\begin{defn}
Given a preframe $\vec{\Psi}$, we define its annihilator, 
$\vec{{\Psi}^{\bot}}$ by the equation
$$(\vec{\Psi},\; \vec{{\Psi}^{\bot}})_+ =
([\vec{\Psi},\;\vec{\Psi}],\;\vec{{\Psi}^{\bot}})_+ =0.$$
\end{defn}
\noindent
Thus an annihilator is related to the original preframe.
\begin{defn}
Two preframes,  ${\vec{\Psi}}^1, \; {\vec{\Psi}}^2$ are weakly
equivalent if there exists a nontrivial annihilator $ \vec{{\Psi}^{\bot}}$
such that for some constants ${\beta}_1, {\beta}_2$ we obtain
\begin{align*}
&({\vec{\Psi}}^1,\; \cos {\beta}_1\vec{{\Psi}^{\bot}} + \sin {\beta}_1
({\omega}^{\#} - {\pi}^{\#})\vec{{\Psi}^{\bot}})_+\\ 
=&({\vec{\Psi}}^2,\;\cos {\beta}_2\vec{{\Psi}^{\bot}} + \sin {\beta}_2
({\omega}^{\#} - {\pi}^{\#})\vec{{\Psi}^{\bot}} )_+ \\
=&([{\vec{\Psi}}^1,{\vec{\Psi}}^1],\; \cos {\beta}_1\vec{{\Psi}^{\bot}} 
+ \sin {\beta}_1({\omega}^{\#} - {\pi}^{\#})\vec{{\Psi}^{\bot}})_+\\ 
=&([{\vec{\Psi}}^2,{\vec{\Psi}}^2],\; \cos {\beta}_2\vec{{\Psi}^{\bot}} 
+ \sin {\beta}_2({\omega}^{\#} - {\pi}^{\#})\vec{{\Psi}^{\bot}})_+= 0.
\end{align*}
\end{defn} 
\indent
Knowing that the structure group can be reduced explains our definition.
The constant preternatural rotations should suffice.
The number of independent elements of the Courant algebroid that 
may possibly comprise an annihilator ranges from zero to $2k$. 
Maximally nonintegrable preframes are extremely picky and hard to
annihilate, whereas sections of Dirac subbundles allow $2k$-dimensional 
annihilators. That number is invariant under the action of the bundle 
map ${\omega}^{\#}- {\pi}^{\#}$. One instructive case is that of preframes 
that happen to be $2k$-tuples of linearly independent sections of a typical 
Dirac subbundle. Any two of those are weakly equivalent. It makes sense since 
they all induce identically vanishing Riemann period matrices on open subsets.\\
\indent
Obviously, equivalent preframes are weakly equivalent. That readily
follows from~\eqref{E: structure group}. It is unclear
whether real analytic weakly equivalent preframes are equivalent.\\

\section{Black Hole Solutions of EVE}
\paragraph*{}
Throughout this section, we work with two basic objects:
The Einstein vacuum equations, under the acronym EVE\footnote{Strictly
speaking, Sections 5.2 and 5.4 presuppose an observer with a mass 
crossing the event horizon, and gravitational collapse, both of which 
take the full-fledged Einstein equations},
$$ \text{Ric}(g)=0,$$
\noindent
and a special sort of open subsets of the space-time.
Specifically, for a black hole solution of EVE, we make
\begin{defn}
An open set $U \subset {\mathbb{R}}^{1+3}$ is called
\textit{representative} if for a singular preframe
encoding the black hole solution, there is an element
such that $U^N_{ij} \cap U \ne \emptyset $ (an inner curvature blow-up), and
$U^O_{ij} \cap U \ne \emptyset$ (the Cauchy horizon).
\end{defn}
\noindent
The significance of our choice is obvious: we intend to make use
of the new metric invariants in general relativity. Admittedly, the
condition of `general covariance' imposed by Einstein involves
the entire group of diffeomorphisms, so metric invariants,
the Riemann period matrices among them, are not preserved. But the
incidence relations associated with two metric configurations do
persist.\\

\subsection{The Kerr Preframe}
\paragraph*{}
The Kerr solution~\cite{Kerr}, discovered by Roy Kerr in 1963, is,
perhaps, the most interesting of all exact solutions of EVE. It describes
a rotating axisymmetric black hole, and 
provides a natural testing ground (space?) for preframe theory.
Hence we work out the details, including the blowup parameter, and
a representative set, explicitly. As a primary source, we utilize
the book~\cite{WVS}, predominantly Chapter 1, written by Matt Visser.\\
\indent
The second version (and the one we are interested in) of the Kerr line 
element presented in~\cite{Kerr} was defined in terms of `Cartesian'
coordinates $(t,x, y, z)$:
\begin{alignat}{2}\label{E: Kerr}
ds^2 =\;\;&-dt^2 + dx^2 + dy^2 + dz^2 \\
&+ \frac{2mr^3}{r^4 + a^2z^2}\left[dt + \frac{r(xdx + ydy)}{a^2 +r^2} +
\frac{a(ydx-xdy)}{a^2 +r^2} + \frac{z}{r}dz \right]^2, \notag
\end{alignat}
\noindent
subject to $r(x,y,z)$, which is now a dependent function, and not a
radial coordinate, being implicitly determined by:
\begin{equation}
x^2 +y^2 +z^2 = r^2 + a^2\left[ 1- \frac{z^2}{r^2} \right].
\end{equation}
\noindent
The angular momentum $J$ is incorporated into the line element~(\ref{E: Kerr})
via $a=\frac{J}{m}$, $m$ being the mass.\\
\indent
The full $0<a<m$ metric is now manifestly of the Kerr-Schild form:
\begin{equation}
g_{ij} = {\eta}_{ij} + \frac{2mr^3}{r^4 + a^2z^2}{\pounds}_i{\pounds}_j,
\end{equation}
\begin{equation}
{\pounds}_i=\left(1,\;\frac{rx+ay}{r^2+a^2},\;\frac{ry-ax}{r^2+a^2},\;
\frac{z}{r}\right).
\end{equation}
\noindent
Here ${\pounds}_i$ is a null vector with respect to both $g_{ij}$ and
${\eta}_{ij}$.\\
\indent
To delineate a standard representative set $U_{\text{Kerr}}$, we need to restrict
our base manifold to a more manageable (and physically relevant) subset $r \geqslant 0$.
Next, we choose $z$ as the blowup parameter. Our choice is informed by the fact that
the Kerr spacetime is naturally axisymmetric, and its symmetry group is $SO(2)$.
The crucial (but not all) loci are as follows 
$$U^0_{\text{Kerr}} =\{(t,x,y,z) \in {\mathbb{R}}^{1+3}| x^2 +y^2 = a^2,\; z=0\},$$ 
$$U^{1,\;2}_{\text{Kerr}} =\{(t,x,y,z) \in {\mathbb{R}}^{1+3}| x^2 +y^2 + 
\frac{2m}{{r}_{\pm}}z^2 = 2m{r}_{\pm}\}.$$
All the loci have to be transversal to the $z$-direction. To include at least 
some parts of these three sets, we choose our slice of heaven to be
\begin{equation}\label{E: rep set}
U = \{(t,x,y,z) \in {\mathbb{R}}^{1+3}|0<t<1,\;\;
0<x,\; y < \frac{\sqrt{2}a}{2} + \epsilon,\;\;\;z^2 < (r_+ + \epsilon)^2 \}.  
\end{equation}
\indent
The remaining vanishing loci are
$$U^3_{\text{Kerr}} = \{(t,x,y,z) \in {\mathbb{R}}^{1+3}|rx +ay =0\},$$
$$U^4_{\text{Kerr}} = \{(t,x,y,z) \in {\mathbb{R}}^{1+3}|ry -ax =0\},$$
$$U^5_{\text{Kerr}}= \{(x,y,z) \in {\mathbb{R}}^{1+3}| x^2 + y^2 > a^2,\;\;z=0 \}.$$
\noindent
They are comprised of ellipses and circles, one each for a fixed value of $r$.\\
\indent
In terms of individual elements, we have
\begin{equation}
U^O_{00} = U^{1,\;2}_{\text{Kerr}}; \;\;\;\; 
U^N_{00}=U^0_{\text{Kerr}}; \;\;\;\;U^N_{ij} = U^0_{\text{Kerr}};                
\end{equation}
\begin{equation}
U^O_{01} = U^3_{\text{Kerr}};\;\;\;U^O_{02} = U^4_{\text{Kerr}};\;\;\;
U^O_{03} =U^5_{\text{Kerr}}.
\end{equation}
\indent
The rest are just unions:
\begin{equation}
U^O_{13} =  U^3_{\text{Kerr}} \cup U^5_{\text{Kerr}};\;\;\;
U^O_{23} =  U^4_{\text{Kerr}} \cup U^5_{\text{Kerr}};\;\;\;U^O_{12} = 
U^3_{\text{Kerr}} \cup U^4_{\text{Kerr}}.
\end{equation}
\indent
Individual standard normal form coefficients are as follows:
\begin{equation}
e^{2w_{00}}{\cos}^2{\kappa}_{00}\bigr|_{U\cap (U^N \cup U^O_{00})} = 
\biggr|\frac{\sin(2mr^3 -r^4 -a^2z^2)}{2mr^3 -r^4 -a^2z^2}
\frac{\sin(r^4 +a^2z^2)}{r^4 +a^2z^2} \biggr|.
\end{equation}
\noindent
Hence, prior to smoothing, we find that $\sin({\kappa}_{00}(x,y,z))$ is
an eigenfunction of the Hill potential $\cos(\frac{z}{T_{00}(x,y)})$, 
with the designated eigenvalue indexed ${\lambda}^{00}_{12}(x,y)$.
\begin{equation}
e^{2w_{01}}{\cos}^2{\kappa}_{01}\bigr|_{U\cap (U^N \cup U^O_{01})} = 
\biggr|\frac{\sin(2mr^3(rx +ay))}{2mr^3(rx + ay)}
\frac{\sin((r^4 +a^2z^2)(r^2 + a^2))}{(r^4 +a^2z^2)(r^2 + a^2)}\biggr|.
\end{equation}
\noindent
The function $\sin({\kappa}_{01}(x,y,z))$ has
exactly two roots in $U$. Thus, we obtain ${\lambda}^{01}_{4}(x,y)$. 
\begin{equation}
e^{2w_{02}}{\cos}^2{\kappa}_{02}\bigr|_{U\cap (U^N \cup U^O_{02})} = 
\biggr|\frac{\sin(2mr^3(ry -ax))}{2mr^3(ry - ax)}
\frac{\sin((r^4 +a^2z^2)(r^2 + a^2))}{(r^4 +a^2z^2)(r^2 + a^2)}\biggr|.
\end{equation}
\noindent
$\sin({\kappa}_{02}(x,y,z))$ has two roots, hence ${\lambda}^{02}_{4}(x,y)$.
\begin{equation}
e^{2w_{03}}{\cos}^2{\kappa}_{03}\bigr|_{U\cap (U^N \cup U^O_{03})} = 
\biggr|\frac{\sin(2mr^2z)}{2mr^2z}\frac{\sin(r^4 +a^2z^2)}{(r^4 +a^2z^2)}\biggr|.
\end{equation}
\noindent
$\sin({\kappa}_{03}(x,y,z))$ features a coalescence of roots, $\{z=0\}$ being
their common locus. That corresponds to ${\lambda}^{03}_{2}(x,y)$. 
Its behavior is singularly simple:
$$\sin({\kappa}_{03}(x,y,z))\bigr|_{|z|> \epsilon} =\frac{\sqrt{2}}{2},
\;\;\;\;\sin({\kappa}_{03}(x,y,z))\bigr|_{z=0} =0.$$
\begin{align}
e^{2w_{11}}{\cos}^2{\kappa}_{11}\bigr|_{U\cap (U^N \cup U^O_{11})} &= 
\biggr|\frac{\sin(2mr^3(rx +ay)^2+(r^2 +a^2)^2)}
{2mr^3(rx + ay)^2+ (r^2 + a^2)^2}\\
&\times \frac{\sin((r^4 +a^2z^2)(r^2 + a^2)^2)}
{(r^4 +a^2z^2)(r^2 + a^2)^2}\biggr|.\notag
\end{align}
$\sin({\kappa}_{11}(x,y,z))$ corresponds to ${\lambda}^{11}_{2}(x,y)$
by virtue of $U^O_{11} =\emptyset$.
\begin{align}
e^{2w_{12}}{\cos}^2{\kappa}_{12}\bigr|_{U\cap (U^N \cup U^O_{12})} &= 
\biggr|\frac{\sin(2mr^3(rx +ay)(ry -ax)}{2mr^3(rx + ay)(ry -ax)}\\
&\times \frac{\sin((r^4 +a^2z^2)(r^2 + a^2)^2)}
{(r^4 +a^2z^2)(r^2 + a^2)^2}\biggr|.\notag
\end{align}
\noindent
This coefficient takes ${\lambda}^{12}_{6}(x,y)$, 
as $U^3_{\text{Kerr}} \cap U^4_{\text{Kerr}}\cap U = \emptyset$.\\
\begin{align}
e^{2w_{13}}{\cos}^2{\kappa}_{13}\bigr|_{U\cap (U^N \cup U^O_{13})} &= 
\biggr|\frac{\sin(2mr^2(rx +ay)z)}{2mr^2(rx + ay)z}\\
&\times \frac{\sin((r^4 +a^2z^2)(r^2 + a^2)^2)}
{(r^4 +a^2z^2)(r^2 + a^2)^2}\biggr|.\notag
\end{align}
\noindent
$\sin({\kappa}_{13}(x,y,z))$ corresponds to ${\lambda}^{13}_{6}(x,y)$.
\begin{align}
e^{2w_{22}}{\cos}^2{\kappa}_{22}\bigr|_{U\cap (U^N \cup U^O_{22})} &= 
\biggr|\frac{\sin(2mr^3(ry -ax)^2+(r^2 +a^2)^2)}{2mr^3(ry -ax)^2+(r^2 +a^2)^2}\\
&\times \frac{\sin((r^4 +a^2z^2)(r^2 + a^2)^2)}
{(r^4 +a^2z^2)(r^2 + a^2)^2}\biggr|.\notag
\end{align}
\noindent
$\sin({\kappa}_{22}(x,y,z))$ corresponds to ${\lambda}^{22}_{2}(x,y)$ in
view of the fact that $U^O_{22} =\emptyset$.
\begin{align}
e^{2w_{23}}{\cos}^2{\kappa}_{23}\bigr|_{U\cap (U^N \cup U^O_{23})} &= 
\biggr|\frac{\sin(2mr^2z(ry -ax))}{2mr^2z(ry -ax)}\\
&\times \frac{\sin(r^4 +a^2z^2)(r^2 + a^2)^2)}
{(r^4 +a^2z^2)(r^2 + a^2)^2}\biggr|.\notag
\end{align}
\noindent
$\sin({\kappa}_{23}(x,y,z))$ takes ${\lambda}^{23}_{6}(x,y)$.
\begin{equation}
e^{2w_{33}}{\cos}^2{\kappa}_{33}\bigr|_{U\cap (U^N \cup U^O_{33})}= 
\biggr|\frac{\sin(2mrz^2 +r^4 +a^2z^2)}{2mrz^2 +r^4+a^2z^2}
\frac{\sin(r^4 +a^2z^2)}{(r^4 +a^2z^2)}\biggr|.
\end{equation}
\noindent
Lastly, $\sin({\kappa}_{33}(x,y,z))$ takes ${\lambda}^{33}_{2}(x,y)$
as we realize that $U^O_{33} =\emptyset$.\\
\indent
Those are partially regularized functions. As yet, they are still
to undergo further treatment. As ${\square}_{\text{Kerr}}$ and the
smoothing operators are axisymmetric, the fully regularized 
functions would satisfy
$${\mathcal{L}}_{f_1K_1 + f_2K_2}e^{w_{ij}(x,y,z)}\cos({\kappa}_{ij}(x,y,z))=0,
\;\;\;{\mathcal{L}}_{f_1K_1 + f_2K_2}{\lambda}^{ij}_{2s}(x,y)=0,$$
where $K_1$, $K_2$ are the original Killing vectors. \\
\indent
We fix the symplectic form: $\omega = dt \wedge dz + dx \wedge dy$.
Then the preframe becomes
\begin{align}
{\varPsi}_i = &e^{w_{i0}(x,y,z)}\cos ({\kappa}_{i0}(x,y,z))dt +
e^{w_{i0}(x,y,z)}\sin ({\kappa}_{i0}(x,y,z))\ddz+\\\notag
&e^{w_{i1}(x,y,z)}\cos ({\kappa}_{i1}(x,y,z))dx +
e^{w_{i1}(x,y,z)}\sin ({\kappa}_{i1}(x,y,z))\ddy+\\\notag
&e^{w_{i2}(x,y,z)}\cos ({\kappa}_{i2}(x,y,z))dy -
e^{w_{i2}(x,y,z)}\sin ({\kappa}_{i2}(x,y,z))\ddx +\\\notag
&e^{w_{i3}(x,y,z)}\cos ({\kappa}_{i3}(x,y,z))dz -
e^{w_{i3}(x,y,z)}\cos ({\kappa}_{i3}(x,y,z))\frac{\partial}{\partial t};
\notag
\end{align}
\begin{equation}
\vec{\varPsi}=
\left[\begin{array}{cc}
{\varPsi}_0\\
{\varPsi}_1\\
{\varPsi}_2\\
{\varPsi}_3
\end{array}\right]
\end{equation}
\noindent
And the complementary preframe is gotten via the canonical bundle map:
$$ \vec{{\varPsi}^*} = ({\omega}^{\#}-{\pi}^{\#}) \vec{\varPsi}.$$

\subsection{Strong Cosmic Censorship Conjecture}
\paragraph{}
With hyperbolic PDE on Lorenzian manifolds, there comes the 
attendant causal structure (a Cauchy hypersurface $\mathcal{S}$ 
with the past Cauchy development $\mathcal{S}^{\swarrow}$ and the 
future Cauchy development $\mathcal{S}^{\nearrow}$). The existence 
of such a hypersurface is tantamount to $\mathcal{S}^{\swarrow} 
\cup \mathcal{S} \cup \mathcal{S}^{\nearrow} = M$, the entire 
manifold. If, however, $\mathcal{S}^{\swarrow} \cup \mathcal{S} \cup 
\mathcal{S}^{\nearrow} \neq M$, there exists a Cauchy horizon - a
light-like boundary of the domain of validity of the underlying
Cauchy problem for the PDE in question. One side of the horizon 
contains closed space-like geodesics, whereas the opposite side
may contain closed time-like geodesics. Beyond the horizon,
Cauchy data no longer uniquely determines the solution.
Thus the presence of such a horizon is manifestly a violation of 
classical determinism for all observers passing through the horizon. 
Some black holes, particularly the Kerr solutions with $0<a<m,$ 
are known to feature a distinct Cauchy horizon. Hence, to save 
classical determinism, Roger Penrose~\cite{Pen2} put forth the cosmic 
censorship conjectures. They are classified into two categories: weak 
and strong ones. We focus on the strong cosmic censorship conjecture.
There are several different formulations, the one we discuss below
states that the maximal Cauchy development of generic compact or 
asymptotically flat initial data is locally inextendible as a 
$C^0$-Lorentzian manifold. The original formulation of the strong 
cosmic censorship conjecture was modelled on the prototype of the
Schwarzschild solution $(a=0),$ where no Cauchy horizon is present,
and the spacetime can be construed to `terminate' at a spacelike
curvature nondifferentiability locus, the metric being 
inextendible past the nondifferentiability surface, not even
continuously. As Dafermos and Luk~\cite{DL} put it: "The singular
behavior of Schwarzschild, though fatal for reckless observers
entering the black hole, can be thought of as epistemologically
preferable for general relativity as a theory, since this ensures
that the future, however bleak, is indeed determined". In that   
paper, dating back to 2017, the authors disproved the strong
cosmic censorship conjecture formulation above for the Cauchy 
horizon of a charged, rotating black hole. Furthermore, according
to~\cite{DL}, the $C^0$-extensions are patently nonunique.\\
\indent
However, a modicum of uniqueness can be recovered for the black
hole solutions with Cauchy horizons. It should be emphasized,
this only applies to the structure of spacetime, not 
relativistic dynamics on the black hole background. We state
\begin{thm}\label{T: uniqueness of black holes}
Within an assertedly representative set $U$, the outer part of the
tubular neighborhood of $U^O_{ij} \cap U$ determines
the Riemann period matrices $[[{\mathfrak{R}}^{ij}]_{\beta \gamma}]_U$
of the regularized singular preframe.
\end{thm}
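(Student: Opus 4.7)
The plan is to exploit the analytic rigidity of the Hill periodic spectrum together with the Torelli theorem, reducing the determination of the period matrices inside $U$ to a boundary value problem on the outer tubular neighborhood of the Cauchy horizon. First I would fix the outer tubular neighborhood $N_{\text{out}} \subset U$ of $U^O_{ij}\cap U$, where all components of the metric remain smooth and the factorizations $g^{ij} = (g^{ij})^+/(g^{ij})^-$ meet the standard normal form of Section 3.1. On $N_{\text{out}}$ the weights $e^{w_{ij}}$ and the angle fields $\kappa_{ij}$ are prescribed up to the Nash-Gromov smoothing, with the boundary conditions at $\partial U$ supplying the required normalization.

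Next I would propagate this data inward using the Einstein vacuum equation together with the Laplace-Beltrami equation~\eqref{E: Laplace} for the auxiliary scalar fields. Because ${\Delta}_g f = 0$ is a wave equation on a Lorentzian background, the characteristic data imposed on a spacelike slice of $N_{\text{out}}$ determines the solution throughout its domain of dependence. The wavefront may meet the Cauchy horizon $U^O_{ij}\cap U$ without admitting a $C^1$-extension, but one-sided limits persist and suffice to pin down the optimized Hill potentials $\cos(x_1/T_{vw})$ of Section 3.3 together with their periodic spectra, since the smoothing operators act locally and preserve boundary behavior.

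The third step invokes analytic rigidity. By Theorem~\ref{T: spectrum} the periodic spectrum ${\lambda}_s^{ij}(x_2,\dots,x_{2k})$ consists of analytic functions of the elementary symmetric polynomials in the non-blowup coordinates. Having fixed these analytic functions on the open subset of base variables underlying $N_{\text{out}}$, they extend uniquely by analytic continuation to the whole projection of $U$. The isospectral class $\mathfrak{T}(U)$ and the Hill surface fibration ${\mathfrak{T}}_{\mathbb{C}}(U)$ are thus determined. Applying Lemma~\ref{T: period matrix invariance} together with the Its-Matveev inversion~\eqref{E: Its-Matveev}, the canonical basis of holomorphic differentials descends unambiguously, and Background Theorem~\ref{T: Torelli} delivers the uniqueness of $[[{\mathfrak{R}}^{ij}]_{\beta\gamma}]_U$.

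The hard part will be reconciling this assertion with the failure of strong cosmic censorship: by Dafermos-Luk the metric admits multiple $C^0$-extensions across the Cauchy horizon. My proposed resolution is that the period matrix depends on the metric only through the periodic spectrum of the Hill operators with optimized potentials, which is real-analytic in the base variables and hence rigid under analytic continuation. Any two $C^0$-extensions must therefore differ by contributions that are non-analytic in $(x_2,\dots,x_{2k})$ and cannot perturb the spectrum, leaving the period matrix invariant. A subsidiary technical issue I would need to address carefully is the potential genus jump between hyperelliptic Hill surfaces of infinite genus and the limiting conjoined planes $[(\mathbb{C}\backslash\{{\lambda}_i\}) \times (\mathbb{C}\backslash\{{\lambda}_i\})]/_{\sim}$ that arise when the spectrum becomes purely double; this I would handle by the universal 2-fold cover construction already invoked in the proof of Lemma~\ref{T: period matrix invariance}, extending the canonical map $\aleph$ uniformly over leaves of the foliation of $U/x_1$.
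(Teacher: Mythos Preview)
Your approach diverges substantially from the paper's, and the core mechanism you propose does not go through.

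The paper's proof works by \emph{enlarging the optimization}~\eqref{E: optimized potential}: instead of minimizing only over smoothing kernels $S_{\zeta}$ and auxiliary fields $\kappa$, it minimizes simultaneously over the set of all $C^0$-extensions past the Cauchy horizon. The resulting spectrum is unique by construction (it is an infimum), and Theorem~\ref{T: spectrum} is invoked only at the end, to verify that the various piecemeal minimizers, glued via partitions of unity, are equivalent. No analytic continuation, no propagation of Cauchy data, and no Torelli argument appears.

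Your strategy, by contrast, attempts to determine the spectrum on $N_{\text{out}}$ and then extend by analyticity in $(x_2,\dots,x_{2k})$. The gap is twofold. First, the Hill operator for each fixed $(x_2,\dots,x_{2k})$ acts on the \emph{entire} $x_1$-interval $[x_1^-,x_1^+]$, which for a representative set necessarily straddles the Cauchy horizon; there is no open set of base points on which the periodic spectrum is computed from outer data alone, so you have nothing from which to continue. Second, and more seriously, the analyticity you invoke (the remark following Theorem~\ref{T: spectrum}) is a consequence of the metric's isometry group acting on the optimized potentials. A generic $C^0$-extension past the Cauchy horizon has no reason to preserve any continuous symmetry, so the spectrum it induces need not be analytic in the base variables at all. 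Two distinct extensions can therefore yield two distinct spectra, neither constrained to agree with the other on any subdomain, and your rigidity argument collapses. The paper sidesteps this entirely by making the extension itself a variable in the optimization, so that the output spectrum is uniquely defined independently of any one extension's symmetry properties.
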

\begin{proof}
Here we already have the complete Cauchy horizon. It is topologically 
a sphere. The undetermined part is the locus
of nondifferentiability that hosts the curvature blow-up. However,
all periods would be fixed once we settle on a representative set.
Now using all conceivable extensions past the Cauchy horizon, we 
arrive at multiple regularized auxiliary fields for every such
extension following our Laplace-Beltrami $\longrightarrow$ 
Nash-Gromov $\longrightarrow$ Hill procedure. Then we turn off 
the beaten path and minimize over the extensions one field at a time 
mimicking~\eqref{E: optimized potential}:
\begin{equation}
\lim_{N \rightarrow \infty} \liminf_{(\text{extensions},\;S_{\zeta},\;
{\kappa})} (||{\lambda}_{0}^{\text{c}} - {\lambda}_{0}^{}||^2_U  
+\sum_{s=1}^{N}\frac{1}{2^s} (||{\lambda}_{2s}^{\text{c}} - 
{\lambda}_{2s-1}^{}||^2_U + ||{\lambda}_{2s}^{\text{c}} - 
{\lambda}_{2s}^{}||^2_U)).
\end{equation}
\noindent
This piecemeal optimization scheme may yield a configuration 
that is not equal to any of the existing extensions, but we
still find optimized extensions by melding those suboptimal
ones using partitions of unity. They are equivalent in view of
Theorem~\ref{T: spectrum}. 
\end{proof}
\indent
Our eclectic optimal extension takes advantage of the fact 
that there no longer are relations between individual metric 
coefficients. $C^0$-configurations are not solutions of EVE.\\
\indent
The similarity between Theorem~\ref{T: uniqueness of black holes}
and the holographic principle championed by some physicists is
purely coincidental. We knew from the outset that the locus of 
curvature blow-up must be present inside.\\
\indent
Theorem~\ref{T: uniqueness of black holes} does not make or
reinforce an argument for the validity of the strong cosmic
censorship conjecture. More precisely, in conjunction with the Penrose
singularity theorem~\cite{Pen}, as the prevalence of black holes had
been established, our result is a way to safeguard uniqueness. Thus,
owing to Penrose, we look for a black hole solution of EVE. Then the 
periodicity of the Hill potentials and the structure of the Cauchy
horizon determines the location and structure of $U^N_{ij} \cap U$.\\ 
\indent
A much more difficult question still remains unaddressed. With closed
timelike geodesics stubbornly persisting beyond the Cauchy horizon, no
semblance of causality is possible. And the results of~\cite{DL} only
bring this unsatisfactory state of affairs into sharper focus.
Our proposal to reestablish causality parts ways with general relativity.
Rather than trying to finesse some ever more sophisticated formulation
of the cosmic censorship, we look at the Riemann period matrices
corresponding to representative sets with masses/observers moving inside
the Cauchy horizon. For each instance (of an affine geodesic parameter), 
due to the gravitational pull of the mass/observer, we get a particular 
set of functions of the Riemann period matrix. 
Thus each instance produces a different configuration.
If we compare this situation with that of a mass/observer living in a
smooth metric configuration, we see that the latter (in view of 
identically zero Riemann period matrix) does not induce this kind of 
inequivalence (the curvatures do change, but the finer invariants 
do not). Therefore this fragmentation of physical reality
resolves the problem of causality in the presence of loci of 
nondifferentiability. Within our paradigm, physical movement always
takes place in an internal parameter-independent Riemann period matrix 
environment, as evidenced by dynamics on smooth Lorenzian manifolds. 
Such dynamics in general do not exist beyond the Cauchy horizon. 
Intrepid observers are the proverbial elephants rambling in the 
china shop of the black hole interior. \\
\indent
Mathematically, it makes sense: a mass moving on a regular background
simply reshuffles sections of a fixed Lagrangian subbundle of 
$T^*{\mathbb{R}}^{1+3}$, whereas the same mass' influence on a lift
of some regularized preframe impinges on local integrability properties
of the underlying subbundle.\\

\subsection{Dynamics in the Interior}
\paragraph*{}
As we postulate, the presence of nonzero Riemann period matrix 
radically alters dynamics, to the point of excluding movement of
masses along generic timelike geodesics. However, there still remains 
one avenue open: moving masses without disturbing the invariants. Owing
to Torelli theorem (Background Theorem~\ref{T: Torelli}), that amounts 
to conformal transformations of individual Riemann surfaces within 
${\mathfrak{T}}_{\mathbb{C}}(U)$ as the affine parameter transversal to the 
$x_1$-direction unfolds. Now on the moduli space of the Riemann 
surfaces encoding the spectrum of relevant Hill operators, we can 
define a monodromy group of automorphisms. There would be a 
representation of the set of timelike geodesics on the aforementioned 
group. The dimension of the kernel of this representation map is what 
we call \textit{the effective dimension}. Our conjecture is, the 
effective dimension is strictly less than three, and all closed 
geodesics are in the image of that representation. Also, the dimension 
may depend on the ratio of masses of the observer and the black hole
being observed. The reason we cannot dismiss the possibility
of nondisturbing mass movement is, absorption of gravitational waves by a
black hole must induce a conformal transformation of 
${\mathfrak{T}}_{\mathbb{C}}(U)$.

\subsection{Black Hole Evaporation}
\paragraph*{}
We inadvertently venture into the quantum theory territory.
The presence of additional differential invariants sheds some
light on the process of evaporation (via emission of thermal
radiation) inaugurated by Stephen Hawking in~\cite{Hawk, HaHa}. 
Instead of sticking with the full-fledged Riemann surface invariants, 
we deconstruct the discriminant~\eqref{D: discriminant} of a 
pertinent Hill operator  $\bigstar (\lambda, t, x,y)$ on a representative 
open set $U = \{(t,x,y,z) \in {\mathbb{R}}^{1+3}\}$. For uniformity, 
we regard $z$ as the singularity parameter. Specifically, 
$\bigstar (\lambda, t, x,y)$ is an integral function of $\lambda$, growing
as $\sqrt{\lambda}$ as $\lambda \longrightarrow \infty$ (\cite{McT}, Section 1).
As such, it allows an infinite product representation:
\begin{equation}
\bigstar (\lambda, t, x,y) = v(\lambda, t,x,y)\prod_{i=0}^{\infty}
(\lambda -{\lambda}_{2i})(\lambda -{\lambda}_{2i-1}).
\end{equation}
\noindent
This  product converges absolutely, so by selecting a principal
branch of $ \sqrt{v(\lambda, t,x,y)}$ over some fixed ramified cover,
we define
\begin{equation}
\bigstar^+ (\lambda, t, x,y) \overset{\textnormal{def}}{=} 
\sqrt{v(\lambda, t,x,y)}\prod_{i=0}^{\infty}(\lambda -{\lambda}_{2i}),
\end{equation}
\begin{equation}
\bigstar^- (\lambda, t, x,y) \overset{\textnormal{def}}{=} 
\sqrt{v(\lambda, t,x,y)}\prod_{i=0}^{\infty}(\lambda -{\lambda}_{2i-1}).
\end{equation}
If ${\lambda}_{2i-1} = {\lambda}_{2i}$, we call this a vacuum configuration
(in conjunction with a Hamiltonian preframe on $U$), or a regular
configuration (in conjunction with  a regular but not a Hamiltonian 
preframe). Otherwise, this would designate a piece of some black hole 
if and only if ${\lambda}_{2i-1} < {\lambda}_{2i}$. The only possibility 
left, ${\lambda}_{2i-1} > {\lambda}_{2i}$, has no geometrical content,
although physicists would be tempted to call such monstrosities
`negative energy vacuum fluctuations'.\\
\indent
To effect a semi-rigorous derivation of black hole evaporation,
Hawking~\cite{Hawk} used the following reasoning: ``... negative
energy flux will cause the area of the event horizon to decrease
and so the black hole will not, in fact, be in a stationary state.
However, as long as the mass of the black hole is large compared
to the Planck mass, the rate of evolution of the black hole will
be very slow compared to the characteristic time for light to
cross the Schwarzschild radius. Thus it is a reasonable 
approximation to describe the black hole by a sequence of stationary
solutions and to calculate the rate of emission in each solution.''
Therefore we let the discriminant to be time-independent, and
the changes be induced by some abstract time-dependent operators. 
Mathematically, they are Hermitian integral operators with compact kernels 
acting on ${\bigstar}^+ (\lambda, 0,x,y)$ and ${\bigstar}^- (\lambda, 0,x,y)$: 
\begin{equation}
(\mathfrak{A}(t_m){\bigstar}^{\pm})({\lambda}, 0,x,y) = \int_{\mathfrak{T}(U)}
H(t_m,x-\chi,y-\upsilon){\bigstar}^{\pm} (l,0,\chi,\upsilon)dl d\chi d\upsilon,
\end{equation}
\begin{equation}
(\mathfrak{E}(t_m){\bigstar}^{\pm})({\lambda}, 0,x,y) = \int_{\mathfrak{T}(U)}
\bar{H}(t_m,x-\chi,y-\upsilon){\bigstar}^{\pm} (l,0,\chi,\upsilon)dl d\chi d\upsilon.
\end{equation}
\noindent
Such operators must be invertible by virtue of the fact their natural
extensions to the sections of the cotangent bundle induce
isomorphisms of the respective Hodge-Kodaira cohomology spaces:
$$H^1_{\textnormal{HK}}(\mathfrak{A}(0)\mathfrak{T}(U)) \cong
H^1_{\textnormal{HK}}(\mathfrak{A}(t_m)\mathfrak{T}(U)),\;\;
H^1_{\textnormal{HK}}(\mathfrak{E}(0)\mathfrak{T}(U)) \cong
H^1_{\textnormal{HK}}(\mathfrak{E}(t_m)\mathfrak{T}(U)).$$
\noindent 
Their main feature is stretching/shrinking of the intervals of 
instability of the periodic spectrum that vary depending on the absorbed 
mass, charge, spin, and emitted energy (written abusing notation):
\begin{equation}
\mathfrak{A}{\lambda}_{2i} - \mathfrak{A}{\lambda}_{2i-1} >
{\lambda}_{2i} - {\lambda}_{2i-1},
\end{equation}
\begin{equation}
\mathfrak{E}{\lambda}_{2i} - \mathfrak{E}{\lambda}_{2i-1} <
{\lambda}_{2i} - {\lambda}_{2i-1}.
\end{equation}
\indent
This reformulation opens a veritable Pandora box, as there is no
reason to believe that all $\mathfrak{A}(t_m){\bigstar}^{\pm}$ have
an underlying stationary black hole solution of EVE. That is why it
was not an empty declaration to classify this as a quantum theory.\\
\indent
The gist of the information loss paradox can be recast in terms
of the absorbtion/emission operators. Namely, the chronological
products $$\cdots \circ\mathfrak{E}(t_m)\circ\mathfrak{E}(t_{m-1})
\circ \cdots \circ \mathfrak{E}(t_1)$$ may have commuting operators, 
in which case the history path of a black hole would be nonunique,
and, as a consequence, noninvertible, contradicting one of the
central tenets of quantum theory. Hence, from that standpoint,
their brackets under composition ought not to vanish:
$$ [\mathfrak{A}(t_1), \mathfrak{A}(t_2)] =
\mathfrak{A}(t_1)\circ\mathfrak{A}(t_2)-
\mathfrak{A}(t_2)\circ\mathfrak{A}(t_1) \neq 0,$$
$$ [\mathfrak{E}(t_1), \mathfrak{E}(t_2)] =
\mathfrak{E}(t_1)\circ\mathfrak{E}(t_2)-
\mathfrak{E}(t_2)\circ\mathfrak{E}(t_1) \neq 0.$$
That is a necessary condition. So the operator algebra of 
$\{\mathfrak{A}(t),\;\mathfrak{E}(t)\}$ is properly defined over
the field of quaternions, so that the chronological
products are unique (hence invertible via substitution
$ \mathfrak{E}(t_m)\mapsto \mathfrak{A}(t_{n-m})$).  
Allowing mixed chronological products would lead
to negative energy vacuum fluctuations. The time degenerates 
into a discretely-valued parameter. Precisely, we set the 
time interval to be bounded below by the Planck time constant:
\begin{equation}
t_{m+1} -t_m \geqslant t_P =\sqrt{\frac{\hbar G}{c^5}}.
\end{equation}
\indent
To cram more information into the chronological products, one 
may want to define this operator algebra over the field of 
octonions thus making it nonassociative.

\subsection{No-Go Theorem}
\paragraph*{}
In a deterministic Universe one should be able to decide 
whether a gravitational collapse will result in a known
stationary black hole well in advance. Any conceivable litmus 
test has to do with the initial mass distribution, angular 
momentum, charge. We present one such test based on the 
differential invariants of Section 4. Since the moment in 
time of the collapsing matter settling down to a stationary 
EVE solution $t_{\textnormal{still}}$ depends crucially on 
the radius of the initial mass distribution, we can modify
our representative set~\eqref{E: rep set} to last long
enough to accommodate a large class of gravitational collapse
scenarios just by keeping the matter within a fixed radius:
\begin{equation}\label{E: collapse}
U = \{ 0 <t <t_{\textnormal{still}} + \epsilon,\;\; 0<x,\; y < 
\frac{\sqrt{2}a}{2} + \epsilon,\;\;\;z^2 < (r_+ + \epsilon)^2 \}.  
\end{equation}
\noindent
Furthermore, we confine those bounded mass distributions to
have an $SO(2)$ symmetry configuration, so that the Kerr 
solution is a distinct collapse endpoint. Specifically, we state
\begin{thm}
For every eigenvalue of the (parametric) Hill operator 
${\lambda}^{\textnormal{Kerr}}_{2s}(x,y)$, there exists a 
function $F^{\textnormal{Kerr}}_{2s}(t,x,y)$ defined on the 
Kerr representative set~\eqref{E: collapse} such that its values 
antedating $t_{\textnormal{still}}$ are sufficient to predict 
whether an asymptotically flat axisymmetric gravitational 
collapse configuration will settle down to the Kerr spacetime.
\end{thm}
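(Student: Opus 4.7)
The plan is to define, for each $s$, the function
\[
F^{\textnormal{Kerr}}_{2s}(t,x,y) \overset{\textnormal{def}}{=} \bigstar\bigl(\lambda^{\textnormal{Kerr}}_{2s}(x,y),\; t,\; x,\; y\bigr),
\]
i.e.\ the discriminant of the Hill operator obtained at time $t$ from the collapsing configuration via the Laplace-Beltrami $\longrightarrow$ Nash-Gromov $\longrightarrow$ Hill procedure of Section~3, then frozen at the value $\lambda=\lambda^{\textnormal{Kerr}}_{2s}(x,y)$ that would eventually label the stationary endpoint. During the collapse, the time-dependent metric solves the full Einstein equations (not EVE in the interior), so at each $t<t_{\textnormal{still}}$ the regularized singular preframe described in Section~2.3 produces a family of Hill potentials $\cos(z/T_{ij}(t,x,y))$ and a parametric spectrum $\{\lambda^{ij}_{2s}(t,x,y)\}$. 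The discriminant varies smoothly with $t$ by Lemma~\ref{T: period matrix invariance} applied fiberwise, and at $t=t_{\textnormal{still}}$ we have $F^{\textnormal{Kerr}}_{2s}(t_{\textnormal{still}},x,y)=\pm 2$ precisely when the endpoint is Kerr.

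Next, I would derive the universal oscillator ODE announced in the Introduction. Differentiating $F^{\textnormal{Kerr}}_{2s}$ twice in $t$ and using the Einstein equations with matter to control $\partial_t g_{ij}$, one obtains
\[
\frac{\partial^2 F^{\textnormal{Kerr}}_{2s}}{\partial t^2} + \gamma_s(t,x,y)\frac{\partial F^{\textnormal{Kerr}}_{2s}}{\partial t} + \Theta_s(x,y)\bigl(F^{\textnormal{Kerr}}_{2s} - \bigstar_{\textnormal{Kerr}}\bigr) = 0,
\]
where $\Theta_s$ is the restoring frequency fixed by the target Kerr spectrum and $\gamma_s(t,x,y)$ is the damping factor sourced by the initial axisymmetric mass distribution through the stress-energy tensor. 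Anharmonicity enters as a perturbation of $\Theta_s$ by the tied-spectrum eigenvalues $\mu_m$ via the Its-Matveev formula~\eqref{E: Its-Matveev}. Because EVE is well-posed forward in time for a fixed Cauchy hypersurface, the initial values $F^{\textnormal{Kerr}}_{2s}(0,x,y),\;\partial_t F^{\textnormal{Kerr}}_{2s}(0,x,y)$ and the profile of $\gamma_s$ up to $t_{\textnormal{still}}$ determine $F^{\textnormal{Kerr}}_{2s}$ uniquely for $t\in[0,t_{\textnormal{still}}]$.

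The predictive criterion then comes from reading off the stationary state of this ODE. A collapse will terminate in the Kerr endpoint iff the pair $(F^{\textnormal{Kerr}}_{2s},\partial_t F^{\textnormal{Kerr}}_{2s})(t,x,y)$ enters and remains within the basin of attraction of the fixed point $(\bigstar_{\textnormal{Kerr}},0)$ determined by the asymptotic damping $\gamma_s(t,x,y)\to\gamma_s^{\infty}(x,y)>0$. Since $\gamma_s^{\infty}$ is computable from the prescribed axisymmetric matter data at any $t_0<t_{\textnormal{still}}$ by integrating the conservation laws along the integral curves of the Killing fields used in Theorem~\ref{T: spectrum}, the antedating values of $F^{\textnormal{Kerr}}_{2s}$ together with $\gamma_s|_{[0,t_0]}$ suffice to decide whether convergence to Kerr occurs. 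Running this argument for every $s$ yields the desired family $\{F^{\textnormal{Kerr}}_{2s}\}$.

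The main obstacle is establishing that the damping coefficient $\gamma_s(t,x,y)$ depends only on invariants of the initial mass distribution and not on the extraneous smoothing choices that plagued the construction of the auxiliary scalar fields. Here one would invoke Theorem~\ref{T: spectrum} and Lemma~\ref{T: period matrix invariance}: the local Killing vector fields associated with the $SO(2)$ symmetry of admissible distributions commute with $\partial/\partial x_1$ (here, with $\partial/\partial z$), hence the Hamiltonian flow representation of Theorem~\ref{T: representation} preserves both the periodic spectrum and its time derivative, collapsing the ambiguity of the Nash-Gromov kernel to a single well-defined $\gamma_s$. A secondary difficulty is to ensure that the anharmonic perturbations remain bounded so that the basin of attraction description is genuinely predictive; this would be handled by the uniformity clause in the definition of representative set~\eqref{E: collapse} and the compactness of $\bar{U}$.
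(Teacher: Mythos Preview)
Your proposal inverts the roles the paper assigns to the two principal objects. In the paper's proof, $F^{\textnormal{Kerr}}_{2s}$ is \emph{the damping coefficient} in the universal oscillator ODE, not the discriminant evaluated at the Kerr eigenvalue. The paper's equation reads
\[
\Bigl[\frac{\partial^2}{\partial t^2} + 2F^{\textnormal{Kerr}}_{2s}(t,x,y)\frac{\partial}{\partial t} + 1\Bigr]\frac{\partial^2 \bigstar(t,x,y,\lambda^{\textnormal{Kerr}}_{2s})}{\partial t^2} = 0,
\]
so the oscillating quantity is $\partial_t^2\bigstar$ (exactly as announced in the Introduction: ``Its second partial derivative with respect to global time coordinate undergoes some anharmonic oscillations''), and $F^{\textnormal{Kerr}}_{2s}$ is the unknown damping that encodes the initial mass distribution. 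The paper then writes $\partial_t^2\bigstar = A\cos(at+\theta)$, observes that the steady-state amplitude $A$ is a function of the damping, applies the inverse function theorem near $t=0$ (where the damping dominates) to obtain $F^{\textnormal{Kerr}}_{2s}$ as a function of $A$, and compares the first few polynomial coefficients of that expansion with those dictated by $\lambda^{\textnormal{Kerr}}_{2s}(x,y)$. No basin-of-attraction analysis and no auxiliary damping $\gamma_s$ enter.

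Your alternative---taking $F^{\textnormal{Kerr}}_{2s}=\bigstar(\lambda^{\textnormal{Kerr}}_{2s},t,x,y)$ and writing a damped oscillator for it---could in principle supply a different function meeting the existence claim, but as stated it has a gap. Your predictive criterion requires \emph{both} the values of $F^{\textnormal{Kerr}}_{2s}$ and the profile $\gamma_s|_{[0,t_0]}$ (and its limit $\gamma_s^\infty$), so the single function promised by the theorem does not by itself decide the outcome. To close this you would have to show that $\gamma_s$ is recoverable from $F^{\textnormal{Kerr}}_{2s}$ alone on $[0,t_0]$, which amounts to inverting your ODE for the damping---essentially the same inverse-function-theorem step the paper performs, but on a different equation (second order in $\bigstar$ rather than fourth) and for a different unknown. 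Until that inversion is carried out, the argument does not deliver the theorem as stated.
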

\begin{proof}
Our proof utilizes the universal oscillator equation. As matter
collapses, the second time derivative of the attendant periodic
spectrum discriminant (formally introduced in~\eqref{D: discriminant})
undergoes some oscillations. A simple mechanical analogue would be
a pendulum with some time-dependent restoring force, which grows in
magnitude from zero to a maximum at the time $t_{\textnormal{max}} <
t_{\textnormal{still}}$, then gradually returns to zero at 
$t_{\textnormal{still}}$. The fact that this oscillator is anharmonic
is encapsulated by the presence of the (unknown) damping function
$F^{\textnormal{Kerr}}_{2s}(t,x,y)$:
\begin{equation}
[\frac{{\partial}^2}{\partial t^2} + 2F^{\textnormal{Kerr}}_{2s}(t,x,y)
\frac{\partial}{\partial t} + 1]\frac{{\partial}^2 {\bigstar}
(t,x,y, {\lambda}^{\textnormal{Kerr}}_{2s}(x,y))}{\partial t^2} =0.
\end{equation}
\noindent
We are primarily interested in the behavior of solutions (known via
our Laplace-Beltrami $\longrightarrow$ Nash-Gromov $\longrightarrow$ Hill 
procedure) on the interval $[0, t_{\textnormal{max}}]$, as the essential
features of the (pending) Kerr black hole would be revealed early.\\
\indent
Using the properties of our ODE, we decompose solutions into the 
steady-state factor (de facto an amplitude), and the oscillatory part:
\begin{equation}
\frac{{\partial}^2 {\bigstar}
(t,x,y, {\lambda}^{\textnormal{Kerr}}_{2s}(x,y))}{\partial t^2} =
A_{\frac{{\partial}^2 {\bigstar}}{\partial t^2}} \cos(at + \theta).
\end{equation}
\noindent
The steady-state factor is expressly a function of the damping. That 
allows us to apply the inverse function theorem in a neighborhood of
$t=0$, where the damping function dominates. As a result, we obtain
$F^{\textnormal{Kerr}}_{2s}(A_{\frac{{\partial}^2 {\bigstar}}
{\partial t^2}})$. It needs to be reiterated, this procedure does not
depend on $t_{\textnormal{max}}$, a fortiori  $t_{\textnormal{still}}$.\\
\indent
Now we approximate by polynomials to compute first few polynomial 
coefficients of the expansion of $F^{\textnormal{Kerr}}_{2s}$
in terms of the steady-state factor. Since those coefficients are linked
to the eigenvalue ${\lambda}^{\textnormal{Kerr}}_{2s}(x,y)$, it follows
that an abstract discriminant ${\bigstar}(t,x,y, {\lambda}_{2s}(t,x,y))$
that does not produce the same polynomial coefficients will never settle
down to the Kerr spacetime.
\end{proof}
The alternative scenarios include, among others, precession and nutation, 
as well as incomplete event horizon formation in spite of the fact that
all the initial data are axisymmetric. The only possible incomplete
event horizons would have to be those with axisymmetric `bold spots'.

\subsection{Beyond Monoparametric Solutions}
\paragraph*{}
Our solution to the equivalence problem of nondifferentiable
metrics is strictly local. On the manifold of general relativity,
some ground is gained with the assumption of asymptotic flatness.
The preframes are somewhat extended over a larger set in view of 
the representative set being constant in time. Still,  
our method only allows for a single black hole. To deal with 
several black holes, comoving or coalescing, a single global
blow-up parameter is insufficient. Instead, we would need to
utilize two (or more) local parameters. Hence new invariants based 
on relative positions may emerge. The Hill operator is not equipped 
to handle those. A natural generalization would seem to be the
two-dimensional Schr\H{o}dinger equation
$$[\frac{{\partial}^2}{\partial x_1^2} + \frac{{\partial}^2}
{\partial x_2^2} + Q(x_1,x_2)]f = \lambda f.$$ 
\noindent
For real $Q(x_1,x_2) \in L^2 ({\mathbb{R}}^2/{\mathbb{Z}^2})$, and
a fixed $\vec{v} \in {\mathbb{Z}}^2$, such that $f(x_1 + v_1, x_2) =
f(x_1, x_2 +v_2) = f(x_1,x_2)$, this poses a self-adjoint boundary
value problem with discrete spectrum (customarily denoted by
$E_i(\vec{v})$), interpreted as the energy levels in solid state physics.
Solutions are represented by a curve on the two-dimensional torus,
called the real Fermi curve. It is complexified by modifying the 
original operator to add a purely imaginary term:
$$[\frac{{\partial}^2}{\partial x_1^2} + \frac{{\partial}^2}
{\partial x_2^2} + i\vec{v} \cdot \nabla +{\vec{v}}^2 +Q(x_1,x_2)].$$
\noindent
For this operator, the Fermi curve is a one-dimensional complex analytic
variety in  ${\mathbb{C}}^2/{\mathbb{Z}^2}$. Its doubly periodic spectrum 
possesses the necessary complexity~\cite{K}, (\cite{FKT}, Section 16),
yet the Torelli theorem remains valid (\cite{FKT}, Section 18).\\
\indent 
However, that entails a radical redefinition of the time dimension. 
Due to the presence of two distinct event horizons, at least one of 
those moving, the local times in the vicinity of them are not related by
a Lorentz transformation. The difference is absolute and depends on
the space coordinates.

\end{document}